\DeclareSymbolFontAlphabet{\mathbbl}{bbold}
\newcommand{\Prism}{{\mathlarger{\mathbbl{\Delta}}}}
\newtheorem{theorem}{Theorem}[section]
\newtheorem{definition}[theorem]{Definition}
\newtheorem{remark}[theorem]{Remark}
\newtheorem{proposition}[theorem]{Proposition}
\newtheorem{lemma}[theorem]{Lemma}
\newtheorem{example}[theorem]{Example}
\def\Q{\mathbb{Q}}
\def\F{\mathbb{F}}
\def\R{\mathbb{R}}
\def\Z{\mathbb{Z}}
\def\A{\mathbb{A}}
\def\H{\mathcal{H}}
\def\G{\mathbb{G}}
\def\D{\mathcal{D}}
\def\E{\mathcal{E}}
\def\Fc{\mathcal{F}}
\def\Fl{\mathscr{F}\ell}
\def\M{\mathcal{M}}
\def\Ol{\mathcal{O}}
\def\P{\mathcal{P}}
\def\T{\mathcal{T}}
\def\V{\mathcal{V}}
\def\g{\mathfrak{g}}
\def\u{\mathfrak{u}}
\def\p{\mathfrak{p}}
\def\Isom{\mathrm{Isom}}
\def\Fil{\mathrm{Fil}}
\def\Hom{\mathrm{Hom}}
\def\Lie{\mathrm{Lie}}
\def\GL{\mathrm{GL}}
\def\Rep{\mathrm{Rep}}
\def\Sh{\mathrm{Sh}}
\def\J{\mathcal{J}}
\def\MF{\mathcal{MF}}
\def\X{\mathcal{X}}
\def\U{\mathcal{U}}
\def\dR{\mathrm{dR}}
\def\et{\mathrm{et}}
\def\Spf{\mathrm{Spf}}
\def\Spec{\mathrm{Spec}}
\def\Vect{\mathrm{Vect}}
\def\MIC{\mathrm{MIC}}
\def\Qcoh{\mathrm{QCoh}}
\def\gr{\mathrm{gr}}
\def\Sym{\mathrm{Sym}}
\def\Higgs{\mathrm{Higgs}}
\def\Verm{\mathrm{Verm}}
\def\lan{\langle}
\def\ran{\rangle}
\def\lra{\longrightarrow}
\def\ra{\rightarrow}
\def\ov{\overline}
\def\ul{\underline}
\def\wh{\widehat}
\def\wt{\widetilde}
\def\st{\stackrel}
\def\tr{\textrm}
\begin{document}

\title{De Rham $F$-gauges and Shimura varieties}
\author{Xu Shen}
\date{}

\address{Morningside Center of Mathematics, Academy of Mathematics and Systems Science, Chinese Academy of Sciences\\
	No. 55, Zhongguancun East Road\\
	Beijing 100190, China}
\address{University of Chinese Academy of Sciences\\Beijing 100049, China}
\bigskip
 
\email{shen@math.ac.cn}

\renewcommand\thefootnote{}
\footnotetext{2020 Mathematics Subject Classification. Primary: 11G18; Secondary: 14G35.}

\renewcommand{\thefootnote}{\arabic{footnote}}
\keywords{Mod $p$ non-abelian Hodge theory, Shimura varieties, flat connections, Higgs bundles, de Rham cohomology, $F$-gauges, $F$-zips, dual BGG complexes}

\begin{abstract}
We study $F$-gauges for de Rham cohomology of smooth algebraic varieties in characteristic $p$. Applying to good reductions of Shimura varieties of Hodge type, we recover the Ekedahl-Oort stratifications by constructing universal de Rham $F$-gauges with $G$-structure.
We also study the cohomology of de Rham $F$-gauges on these varieties. In particular, in the PEL type case and when the weights of the flat automorphic vector bundles are $p$-small, we determine the $F$-gauge structure on their de Rham cohomology by the associated dual BGG complexes.
		
\end{abstract}

\maketitle
\tableofcontents

\section{Introduction}
Shimura varieties are certain algebraic varieties which play a very important role in the Langlands program for number fields. These algebraic varieties admit very rich arithmetic and geometric structure. In this paper, we study the mod $p$ geometry and cohomology of these varieties by the theory of $F$-gauges, as initiated by Fontaine-Jannsen \cite{FJ}, Drinfeld \cite{Dri18, Dri20, Dri23}, Bhatt-Lurie \cite{BL1, BL2, Bhatt} et al. In fact, we mainly restrict to the simplest version of the theory: de Rham $F$-gauges, which are roughly enhanced coefficient objects for the de Rham cohomology of smooth algebraic varieties in characteristic $p$. This theory already reveals new information on the good reductions of Shimura varieties.\\

Let $p$ be a prime.
Kisin \cite{Kis} and Kim-Madapusi Pera \cite{KM} (for $p=2$) have constructed smooth integral canonical models for Shimura varieties of abelian type at $p$. For simplicity, we restrict to the Hodge type case. Let $X$ be the geometric special fiber over $k=\ov{\F}_p$ of the integral canonical model constructed in \cite{Kis, KM} of a Hodge type Shimura variety with hyperspecial level at $p$. Then by the works of Moonen-Wedhorn \cite{MW} (in the PEL type case, see also \cite{ViehmannWedhorn2013} and the references therein) and C. Zhang (in the Hodge type case, \cite{Zhang2018EO}), there is a basic stratification \[X=\coprod_{w\in {}^JW}X^w,\] called the Ekedahl-Oort stratification of $X$, generalizing previous works of Ekedahl, Oort, and Goren-Oort in the Siegel and Hilbert modular varieties case. Here the index set ${}^JW$ is certain subset of distinguished elements in the absolute Weyl group of $G$, the group defining the Shimura variety, which we consider as a reductive group over $\F_p$. The Ekedahl-Oort stratification has nice properties, which lead to better understanding on the arithmetic geometry of $X$, for example see \cite{ViehmannWedhorn2013, GoldringKoskivirta2019, WZ}. \\

To construct the Ekedahl-Oort stratification, one applies the theory of $F$-zips with additional structure developed by Moonen-Wedhorn \cite{MW} and Pink-Wedhorn-Ziegler \cite{PWZ11, PWZ15}, by
first constructing the universal $G$-zip of type $\mu$ (the Hodge cocharacter) over $X$, then showing that the induced morphism \[\zeta: X\ra G\tr{-Zip}^\mu\] from $X$  to the moduli stack $G\tr{-Zip}^\mu$ of $G$-zips of type $\mu$ is smooth. By \cite{PWZ11} section 6, there is a partial order $\preceq$ on $^J W$, and we have a homeomorphism of topological spaces $|G\tr{-Zip}^\mu|\simeq (^J W, \preceq)$. The Ekedahl-Oort stratum $X^w$ is defined as the fiber of $\zeta$ at $w$.\\

Let $S$ be a scheme over $\F_p$. An $F$-zip over $S$ consists of a tuple $(\E, C^\bullet, D_\bullet, \varphi)$, where 
\begin{itemize}
	\item $\E$ is a finite locally free $\Ol_S$-module (i.e. a vector bundle over $S$), 
	\item $C^\bullet$ is a decreasing filtration on $\E$, 
	\item $D_\bullet$ is an increasing filtration on $\E$, 
	\item $\varphi: Fr_S^\ast\gr_C\E\st{\sim}{\ra}\gr_D\E$ is an isomorphism of the associated graded vector bundles up to pullback by the absolute Frobenius $Fr_S$ of $S$. 
\end{itemize}
Such a structure arises naturally from the relative de Rham cohomology (with constant coefficient) of a proper smooth morphism $f: Y\ra S$, once we assume that \emph{the Hodge-de Rham spectral sequence degenerates and the relative Hodge cohomology sheaves are locally free}:  fix an integer $i\geq 0$ and set $\E=H^i_{\dR}(Y/S)$, let $C^\bullet$ be the Hodge filtration on $\E$,  $D_\bullet$ the conjugate filtration, and $\varphi$ the morphism induced by the Cartier isomorphism, cf. \cite{MW} section 7 for more details.
In particular, $F$-zips are closely related to the Dieudonn\'e theory of 1-truncated $p$-divisible groups (\cite{MW, PWZ15}). One has a natural extension of the notion of $F$-zips to $G$-bundles, known as $G$-zips. In the case of the mod $p$ Hodge type Shimura variety $X$, we  consider the relative de Rham cohomology of the abelian scheme \[\mathcal{A}\ra X,\] which is induced from a fixed Siegel embedding. This gives us an $F$-zip over $X$. To construct the universal $G$-zip, one takes care about the Hodge cycles, and makes trivialization of the $F$-zip with respect to the standard one constructed from the Siegel datum, for more details see \cite{Zhang2018EO}.\\

The motivation of this paper comes from the following question: what is the relation between the Ekedahl-Oort stratification of $X$ and the $F$-zip structure on the de Rham cohomology\footnote{In \cite{WZ} Wedhorn-Ziegler studied the cycle classes of Ekedahl-Oort strata in the Chow ring of $X^{tor}$ (smooth toroidal compactification of $X$), showing that they generate the same subring as that generated by the Chern classes of automorphic vector bundles. However, this does not address our question yet.} $H^i_{\dR}(X/k)$? Apparently,
both of them reflect some perspective of the mod $p$ Hodge structure of $X$. \\

Here, to formulate the question one has to be careful. First, to make sense of the $F$-zip structure on the de Rham cohomology $H^i_{\dR}(X/k)$, one has to prove that the Hodge-de Rham spectral sequence degenerates, which is nontrivial in characteristic $p$. Next, in the setting of Shimura varieties, it will be more natural to consider de Rham cohomology with coefficients. The theory of automorphic vector bundles provides plenty of flat vector bundles $(\V,\nabla)$ over $X$, coming from algebraic representations of $G$. Then, it will need furthermore efforts to make sense of the $F$-zip structure on the de Rham cohomology \[H^i_{\dR}(X/k, (\V,\nabla)),\] since in the previous work of Moonen-Wedhorn \cite{MW} only de Rham cohomology with trivial coefficients were considered. At this point we are naturally led to the notion of de Rham $F$-gauges, which are roughly enrichments of $F$-zips by adding an additional datum of a flat connection $\nabla$, with necessary compatibility conditions. It turns out the theory of de Rham $F$-gauges unifies and strengthens  the two structures on $X$: Ekedahl-Oort stratification and the de Rham cohomology.\\ 

Let $S$ be a \emph{smooth} scheme over $\F_p$. A de Rham $F$-gauge (see Definition \ref{def F-gauge}) over $S$ consists of a tuple $(\E, \nabla, C^\bullet, D_\bullet, \varphi)$, where 
\begin{itemize}
\item $(\E, C^\bullet, D_\bullet, \varphi)$ is an $F$-zip over $S$, 
\item $\nabla$ is a flat connection on $\E$, such that   it satisfies the Griffith transversality condition with respective to $C^\bullet$, 
\item the filtration $D_\bullet$ is horizontal with respective to $\nabla$, and the induced connection on $\gr_D\E$ has zero $p$-curvature,
\item let $\theta_\nabla=\gr_C\nabla$ be the graded Higgs field on $\gr_C\E$, and $\psi_\nabla: \gr_D\E\ra \gr_D\E\otimes Fr_S^\ast\Omega_S^1$ the morphism induced by the last condition on $p$-curvature, then we require the isomorphism $\varphi: Fr_S^\ast\gr_C\E\st{\sim}{\ra}\gr_D\E$ to be compatible with $\theta_\nabla$ and $\psi_\nabla$: it induces an isomorphism
$\varphi: Fr_S^\ast\theta_\nabla\simeq \psi_\nabla$.
\end{itemize}
By a classical theorem of Katz (\cite{Kat72} Theorem 3.2), such a structure also arises naturally from a proper smooth morphism $f: Y\ra S$, under the same condition on the degeneration of the  Hodge-de Rham spectral sequence and locally freeness of the Hodge cohomology sheaves. In particular, the natural connection $\nabla$ on $\E=H^i_{\dR}(Y/S)$ is given by the Gauss-Manin connection, $\theta_\nabla$ is then the Kodaira-Spencer map, and $\psi_\nabla$ the induced $p$-curvature map. See also \cite{OV} Remark 3.19 for some generalized versions of Katz's theorem.
On the other hand,
one can show that this explicit definition is equivalent to the modern stacky approach of Drinfeld \cite{Dri20} and Bhatt-Lurie \cite{Bhatt}, where a de Rham $F$-gauge can be defined as a vector bundle\footnote{One can consider more general perfect complexes or even general complexes in the stable $\infty$-derived category of quasi-coherent sheaves on the syntomic stack, as in these references. Here we only restrict to the simplest version of vector bundles.} over the special fiber of the syntomic stack $\ov{S^{syn}}$, cf. Theorem \ref{thm F-gauge via stack}. Another theory of de Rham $F$-gauges also appeared in the work of Fontaine-Jannsen (cf. \cite{FJ}, especially subsection 8.2). It would be interesting to explicitly compare the definition here and the version in \cite{FJ}.\\

Let $F\tr{-Gauge}_{dR}(S)$ be the category of de Rham $F$-gauges over $S$.
One can study the cohomology of a de Rham $F$-gauge $(\E, \nabla, C^\bullet, D_\bullet, \varphi)$ over a proper smooth morphism of smooth schemes $Y/S$: under the above condition that the Hodge-de Rham spectral sequence degenerates and the relative Hodge cohomology sheaves are locally free, the de Rham cohomology $H^i_{\dR}(Y/S, (\E,\nabla))$ carries a natural structure of a de Rham $F$-gauge over $S$, by combining the results of Katz \cite{Kat72} and Moonen-Wedhorn \cite{MW}, cf. Proposition \ref{prop coh F-gauge}. Here, we just mention that to study the conjugate filtration on the de Rham complex, we make use of the conjugate filtration $N^\bullet$ on the sheaf of PD differential operators $\D_{Y/S}$, cf. \cite{OV} subsection 3.4. The graded sheaf $\gr_N\D_{Y/S}$ is a canonical split Azumaya algebra, which essentially reduces the study of conjugate filtration on de Rham complexes to that on the associated $p$-curvature complexes, cf. Proposition \ref{prop Frob gauge complexes}.\\

If $\mathcal{S}_2/W_2$ is a lift of $S/k$ over the truncated Witt ring $W_2=W_2(k)$, we have another closely related category $\MF^\nabla_{[0,p-2]}(\mathcal{S}_2/W_2)$, the category of $p$-torsion Fontaine modules $(M, \nabla, \Fil, \Phi)$, introduced by Faltings in \cite{Fal} (as a generalization of the previous work of Fontaine-Laffaille in the case $S=\Spec\,k$). Here the precise definition of $(M, \nabla, \Fil, \Phi)$ is a little technical, in particular there is a ``strongly divisibility condition'' on the Frobenius $\Phi$ and the Hodge filtration $\Fil$. Using the Cartier transform functor constructed in \cite{OV}, Ogus-Vologodsky gave a reformulation of this category in loc. cit. subsection 4.6. The link between this theory and de Rham $F$-gauges is as follows, cf. Proposition \ref{prop MF dR gauge}. There is a natural functor
\[ \MF^\nabla_{[0,p-2]}(\mathcal{S}_2/W_2) \ra F\tr{-Gauge}_{dR}(S),\]
which is fully faithful, with essential image the de Rham $F$-gauges $(\E, \nabla, C^\bullet, D_\bullet, \varphi)$ of nilpotent level $\leq p-1$, i.e. $\psi_\nabla^{p-1}=0$. We note that this functor was already implicitly discussed in \cite{OV} subsection 4.6. Here, it deserves to note that contrary to the category of $p$-torsion Fontaine modules, we do not need any lift of  $S/k$ to define de Rham $F$-gauges.\\

We have also a similar notion of de Rham $F$-gauge with $G$-structure of type $\mu$ (or for simplicity: de Rham $F$-gauge with ($G,\mu$)-structure), for a reductive group $G$ over $\F_p$ with a cocharacter $\mu$. There is a natural forgetful functor \[F\tr{-Gauge}_{dR}^{G,\mu}(S)\ra G\tr{-Zip}^\mu(S)\] from the category of de Rham $F$-gauge with $G$-structure of type $\mu$ to the category of $G$-zips of type $\mu$ over $S$, which is an equivalence of categories if $S=\Spec\,k$. From here one sees that the natural morphism between the corresponding stacks induces a homeomorphism of underling topological spaces. \\

In a recent paper \cite{Dri23}, Drinfeld has studied the moduli stack\footnote{In Drinfeld's paper \cite{Dri23}, the cocharacter $\mu$ is defined over $\F_p$. But one checks that with necessary modifications, his construction works for $\mu$ defined over a finite extension of $\F_p$, as in the setting of \cite{PWZ11, PWZ15}. Moreover, the stack $BT_1^{G,\mu}$ is exactly introduced to study the mod $p$ geometry of Shimura varieties. } $BT_1^{G,\mu}$ of de Rham $F$-gauge with $G$-structure of type $\mu$, with $\mu$ \emph{minuscule}. He shows that this is a smooth algebraic stack of dimension zero, and the forgetful map \[BT^{G,\mu}_1\ra G\tr{-Zip}^\mu\] is an fppf gerbe banded by a finite group scheme $Lau_1^G$. The stack $BT^{G,\mu}_1$ should be the special fiber of a smooth algebraic stack living over $\Ol_E$, see \cite{Dri23} Conjecture C.3.1 (which has recently been proved, cf. \cite{GM} Theorem A).
If $G=\GL_h$ and $\{\mu\}$ is given by $(1^{d}, 0^{h-d})$ for some integer $0\leq d\leq h$, Drinfeld also conjectures that the stack $BT^{G,\mu}_1$ is isomorphic to the stack of 1-truncated $p$-divisible groups of height $h$ and dimension $d$, cf. \cite{Dri23} Conjectures 4.5.2 and 4.5.3. Recently, Gardner and Madapusi announced a theorem which says that there is an equivalence between the category of  truncated $p$-divisible groups of height $h$ and dimension $d$ over $S$ and the category $F\tr{-Gauge}_{dR}^{G,\mu}(S)$ (cf. \cite{GM} Conjecture 1 and the paragraph below it).  In particular, this equivalence implies Drinfeld's conjecture.\\

Back to Shimura varieties and let $S=X$.
The first main result of this paper is as follows. It allows us to make the link between de Rham $F$-gauges and Ekedahl-Oort stratification of $X$.
 \begin{theorem}[Theorem \ref{thm univ F-gauge}]
 	Let $X$ be the geometric special fiber of the integral canonical model of a Hodge type Shimura variety with hyperspecial level at $p$.
 	There is a natural de Rham $F$-gauge with $(G, \mu)$-structure over $X$, which induces the universal $G$-zip of type $\mu$ given by $\zeta$. The induced morphism $\xi: X\ra BT^{G,\mu}_1$ is smooth.
 	In other words, we have a commutative diagram of smooth morphisms of algebraic stacks over $k$:
 	\[ \xymatrix{ X\ar[r]^-\xi\ar[rd]_\zeta & BT^{G,\mu}_1\ar[d]\\
 		& G\tr{-Zip}^\mu.
 	}\]
 \end{theorem}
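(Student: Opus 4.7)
My strategy is to build the de Rham $F$-gauge with $(G,\mu)$-structure directly out of the universal abelian scheme coming from a Siegel embedding, and then to deduce smoothness of $\xi$ from Zhang's smoothness of $\zeta$ together with Drinfeld's gerbe structure on $\pi$. Concretely, I would first fix a Siegel embedding of the Hodge-type Shimura datum and let $\mathcal{A}\to X$ be the pullback of the universal abelian scheme. Setting $\E:=H^1_{\dR}(\mathcal{A}/X)$, the morphism $\mathcal{A}\to X$ is proper smooth with degenerating Hodge--de Rham spectral sequence and locally free Hodge cohomology sheaves (classical for abelian schemes). Applying Katz's theorem as recalled in the excerpt, $\E$ then carries canonically all the data required by the definition of a de Rham $F$-gauge: the Hodge filtration $C^\bullet$, the conjugate filtration $D_\bullet$ (automatically horizontal with vanishing $p$-curvature on the graded pieces), the Cartier isomorphism $\varphi$, the Gauss--Manin connection $\nabla$, the Kodaira--Spencer Higgs field $\theta_\nabla=\gr_C\nabla$, the induced $p$-curvature morphism $\psi_\nabla$, and the compatibility $\varphi: Fr_X^*\theta_\nabla\simeq\psi_\nabla$.

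Next, I would upgrade this to a $(G,\mu)$-structure using Kisin's Hodge tensors. Let $(s_\alpha)\subset V^{\otimes}$ be tensors cutting out $G\subset\GL(V)$. Their de Rham realizations $s_{\alpha,\dR}\in\E^\otimes$ are horizontal for $\nabla$, lie in the zeroth step of the Hodge filtration, and are $\varphi$-compatible, by Kisin's construction on the integral model together with the crystalline comparison. These are exactly the inputs used by Zhang to produce the universal $G$-zip, so the underlying $G$-zip of the resulting object agrees tautologically with $\zeta$. To upgrade to a de Rham $F$-gauge with $(G,\mu)$-structure, I still need to verify compatibility of the $s_{\alpha,\dR}$ with $D_\bullet$ and $\psi_\nabla$: compatibility with $D_\bullet$ follows from horizontality and the description of $D_\bullet$ via Cartier descent along $\nabla$; compatibility with $\psi_\nabla$ follows from $\varphi$-compatibility together with the defining identity $\varphi: Fr_X^*\theta_\nabla\simeq\psi_\nabla$ on graded pieces. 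Packaging these yields the desired $\xi:X\to BT^{G,\mu}_1$, with $\pi\circ\xi=\zeta$ by construction.

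For the smoothness of $\xi$, I would combine Zhang's result that $\zeta$ is smooth with Drinfeld's structure theorem that $\pi$ is an fppf gerbe banded by $Lau_1^G$ between smooth algebraic stacks of dimension zero. Smoothness being fppf-local on both source and target, one trivializes the gerbe after an fppf cover $T\to G\tr{-Zip}^\mu$; after this base change $\pi$ acquires a section and the problem reduces to comparing the smooth morphism $\zeta\times_{G\tr{-Zip}^\mu} T$ with its lift to $BT^{G,\mu}_1\times_{G\tr{-Zip}^\mu}T$, which differs only by the classifying stack $BLau_1^G$. Since $BT^{G,\mu}_1$ is itself smooth of dimension zero, the standard criterion for smoothness of a morphism factoring through an fppf gerbe between equidimensional smooth stacks then gives smoothness of $\xi$.

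The main obstacle I anticipate is the verification in the second step that Kisin's tensors respect \emph{every} piece of the de Rham $F$-gauge structure, in particular the conjugate filtration $D_\bullet$ and the graded $p$-curvature $\psi_\nabla$. While horizontality and compatibility with $C^\bullet$ and $\varphi$ are classical inputs from Kisin and Zhang, matching the tensors with the conjugate side requires controlling the interaction of the Cartier isomorphism and $p$-curvature with arbitrary tensor powers of $\E$. The cleanest route is likely through Ogus--Vologodsky's Cartier transform, or through the stacky reformulation of de Rham $F$-gauges given by Theorem \ref{thm F-gauge via stack}, which provides a uniform functorial framework in which the conjugate-side data are automatically determined by the Higgs-side data, so that horizontality and $\varphi$-compatibility of the $s_{\alpha,\dR}$ suffice.
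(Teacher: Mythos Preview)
Your proposal is correct and follows essentially the same route as the paper's own proof. The paper gives exactly your direct construction: start from the de Rham $F$-gauge on $\mathcal{V}=H^1_{\dR}(\mathcal{A}/X)$ furnished by Katz's theorem, use the horizontality of Kisin's de Rham tensors $s_{\alpha,\dR}$ to equip the $G$-torsor $I$ with a flat connection, and observe that the required compatibilities with $D_\bullet$ and $\psi_\nabla$ are inherited from the already-established $F$-gauge structure on $\mathcal{V}$ (so your anticipated obstacle is handled just as you suggest, without invoking the stacky reformulation). For smoothness, the paper likewise combines Zhang's smoothness of $\zeta$ with Drinfeld's gerbe theorem; it phrases the reduction slightly more concretely by pulling back the $E_{G,\mu}$-torsor $G\to G\tr{-Zip}^\mu$ to $X^\#\to X$ and noting that a section of a $Lau_1^G$-gerbe is smooth because $BLau_1^G$ is smooth, but this is the same mechanism as your fppf-local trivialization argument. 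The paper also records, as an alternative, a one-line deduction from Lovering's universal filtered $F$-crystal on the integral model, reduced mod $p$.
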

 This theorem can be viewed as a strengthen of the main result of \cite{Zhang2018EO}. Roughly, the natural de Rham $F$-gauge with $(G, \mu)$-structure over $X$ is found by adding the Gauss-Mannin connection to the universal $G$-zip of type $\mu$, which essentially comes from the universal abelian scheme over $X$. The smoothness of the morphism $\xi$ can be proved similarly as the smoothness of $\zeta$ (\cite{Zhang2018EO}). But this is more complicated. Roughly, we identify the tangent spaces of $X$ with those of the stack $BT^{G,\mu}_\infty$ (using the work of Ito \cite{Ito}, see also \cite{IKY}) and the fact that $BT^{G,\mu}_\infty\ra BT^{G,\mu}_1$ is smooth (\cite{GM}), where $BT^{G,\mu}_\infty=\varprojlim_nBT^{G,\mu}_n$ is the limit of the stack $BT^{G,\mu}_n$ introduced by Drinfeld (in \cite{Dri23} Appendix C) and studied by Gardner-Madapusi in \cite{GM}.
With some extra efforts, one can extend the above theorem to the abelian type case (cf. \cite{Lov,SZ} for some constructions of this type). Recently there have been works to construct a mixed characteristic version of the morphism $\xi$, which is certainly more difficult (see the introduction of \cite{GM} and \cite{IKY}).\\

We have also a Tannakian definition of de Rham $F$-gauge with $G$-structure. Thus by the above theorem, we have a functor \[\Rep\,G\ra F\tr{-Gauge}_{dR}(X).\] Therefore, for any $\lambda \in X^\ast(T)_+$ such that $V_\lambda$ is defined over $\F_p$, we have an enrichment of the flat vector bundle $(\mathcal{V}_\lambda, \nabla)$  into a de Rham $F$-gauge
$(\mathcal{V}_\lambda, \nabla, C^\bullet, D_\bullet, \varphi)$.
Consider the de Rham $F$-gauge associated to the dual representation $V_\lambda^\vee$.
In the second main result, we study the $F$-gauge (= $F$-zip) structure on the de Rham cohomology groups \[H_{\dR}^i(X, \mathcal{V}_\lambda^\vee):=H^i_{\dR}(X/k, (\V_\lambda^\vee,\nabla)).\] It turns out that if $\lambda$ satisfies a small condition with respect to $p$, this $F$-gauge structure is determined by the dual BGG complex\footnote{It is interesting to note that the set ${}^JW$ appears once again in the construction of the (dual) BGG complexes. The two appearances of ${}^JW$ (in EO stratification and dual BGG complexes) is in fact the original  motivation of this work.} $BGG(\mathcal{V}_\lambda^\vee)$, cf. \cite{Fal83, PT, MT, LP}, which is a basic construction in the setting of Shimura varieties and closely related to the de Rham complex $DR(\V_\lambda^\vee,\nabla)$.
\begin{theorem}[Theorem \ref{thm de Rham coh BGG}]
	Suppose that $X$ is of PEL type. For simplicity, assume moreover  that $X$ is proper, and the reductive group $G$ defined by the PEL datum is connected. Let $n=\dim\,X$. Assume\footnote{This additional condition, and the condition in part (1) that $\lambda$ is $p$-small with $|\lambda|<p$, are imposed to apply the constructions and results of \cite{LS1} to ensure the associated Hodge-de Rham spectral sequence is degenerate. These can be removed once one can prove the degeneracy by other methods. On the other hand, if we work less explicitly with complexes and derived categories, then we would not need any degeneracy results on the Hodge-de Rham spectral sequence.} moreover $2n< p$ and $\max(2,r_\tau)<p$ whenever there is a local factor $\tr{Sp}_{2r_\tau}$ appearing in the decomposition of $G_k$ arising from the PEL datum. Then we have
	\begin{enumerate}
		\item For any $0\leq i\leq 2n$ and $\lambda\in X^\ast(T)_+$ such that $\lambda$ is $p$-small with $|\lambda|<p$, $V_\lambda$ is defined over $\F_p$, there is a natural $F$-zip structure on $H_{\dR}^i(X, \mathcal{V}_\lambda^\vee)$, which is induced by the cohomology of the de Rham $F$-gauge $(\mathcal{V}_\lambda^\vee, \nabla, C^\bullet, D_\bullet, \varphi)$.
		\item For the above $\lambda$, the $F$-zip structure on $H_{\dR}^i(X, \mathcal{V}_\lambda^\vee)$ is determined by the dual BGG complex $BGG(\mathcal{V}_\lambda^\vee)$ as follows. Let $H\in X_\ast(T)$ be the element defined by the conjugacy class of $\mu$. For each $a\in \Z$,  by the construction of dual BGG complexes, we have
		\[
		H^i\Big(X, \gr_C^aBGG(\mathcal{V}_\lambda^\vee)\Big) =\bigoplus_{w\in {}^JW\atop w\cdot\lambda(H)=-a}H^{i-\ell(w)}(X, \mathcal{W}^\vee_{w\cdot\lambda}). \]
		Then there is a commutative diagram of isomorphisms
		\[\xymatrix{
			Fr_k^\ast \gr_C^aH^i_{\dR}(X,\mathcal{V}_\lambda^\vee)\ar[r]^{\varphi_a}& \gr_D^aH^i_{\dR}(X,\mathcal{V}_\lambda^\vee)\\
			Fr_k^\ast H^i\Big(X, \gr_C^aBGG(\mathcal{V}_\lambda^\vee)\Big)\ar[u]_{\sim} \ar[r]^{\varphi_a}&
			H^i\Big(X, \gr_D^aFr_X^\ast BGG(\mathcal{V}_\lambda^\vee)\Big)\ar[u]_{\sim}.
		}\] 
	\end{enumerate}
\end{theorem}
For the notion of $p$-small weights $\lambda$ and the notation $w\cdot \lambda$ and $|\lambda|$, see subsection \ref{subsection BGG} and \ref{subsection dR coh}. The sheaves $\mathcal{W}^\vee_{w\cdot\lambda}$ are also automorphic vector bundles on $X$, but without connection, see subsection \ref{subsection auto vb}.
Here we restrict to the PEL type case, in order to use the theory of Kuga families \cite{LS1} to deduce that the Hodge-de Rham spectral sequence is degenerate (see also \cite{Lan, LS2}). For the second part of the theorem, on the Hodge filtration side, it is already known that the inclusion of graded complexes \[\gr_CBGG(\V_\lambda^\vee)\hookrightarrow \gr_CDR(\V_\lambda^\vee,\nabla)\] is quasi-isomorphic by the key properties of dual BGG complexes. 
We observe that the $p$-curvature complex together with the conjugate filtration associated to $(\V_\lambda^\vee, \nabla)$ can be realized from a complex of Verma modules,  cf. Proposition \ref{prop p-curvature complex and p-std}. In particular, we apply the crystalline description of the $p$-curvature, see \cite{OV} Proposition 1.7, to achieve this construction.
This is similar to the case of de Rham complex with Hodge filtration, which can be constructed from the standard complex of Verma modules. Then we show the conjugate filtrations of the dual BGG complex and the de Rham complex are compatible, and the induced map between the graded complexes is a quasi-isomorphism.\\

We briefly review the structure of this article. In section 2, we recall various related notions on flat connections in characteristic $p$. In particular, we review the works of Ogus-Vologodsky \cite{OV} on non-abelian Hodge theory in characteristic $p$, the notion of $p$-torsion Fontaine modules of Faltings, as reformulated in \cite{OV}, and the theory of $F$-zips and $G$-zips due to Moonen-Wedhorn \cite{MW} and Pink-Wedhorn-Ziegler \cite{PWZ11, PWZ15}. In section 3, we study basic properties of de Rham $F$-gauges, their cohomology, and the theory with additional structure. In section 4, we apply the previous theory to good reductions of Shimura varieties of Hodge type. We first construct universal de Rham $F$-gauges with $G$-structure and show the smoothness of the induced map $\xi$.
Then we  study the cohomology of de Rham $F$-gauges on these varieties. In the PEL type case and when the weights of the flat automorphic vector bundles are $p$-small, we determine the $F$-gauge structure on their de Rham cohomology by the associated dual BGG complexes.\\
\\
\textbf{Acknowledgments.} We thank Heng Du, Shizhang Li, Mao Sheng, and Daxin Xu for some helpful conversations during the preparation of this work. We also thank Yu Min for valuable discussions. We sincerely thank the referee for helpful comments and suggestions, and for pointing out some inaccuracies in the previous version.
The author was partially supported by the National Key R$\&$D Program of China 2020YFA0712600, the CAS Project for Young Scientists in Basic Research, Grant No. YSBR-033, and the NSFC grant No. 12288201.

\section{Preliminaries}

Let $k$ be a perfect field of characteristic $p$ and $X/k$ a smooth $k$-scheme. In this section, we review some background on the non-abelian Hodge theory on $X$. We first fix some notations on Frobenius morphisms: let $Fr_X: X\ra X$ and $Fr_k: \Spec\,k\ra \Spec\,k$ be the absolute Frobenius maps, $\pi: X'\ra X$ the pullback of $X$ under $Fr_k$, and $F_{X/k}: X\ra X'$ the relative Frobenius map, so that $Fr_X=\pi_X\circ F_{X/k}$ and we have the following commutative digram
	\[\xymatrix{
		X \ar@/_/[ddr] \ar@/^/[drr]^{Fr_X} \ar[dr]|-{F_{X/k}}\\
		& X' \ar[r]^{\pi_X} \ar[d] & X\ar[d] \\
		& \Spec\,k\ar[r]^{Fr_k} & \Spec\,k,
	}\]
	where the square is cartesian. Later, we will also denote $X'=X^{(p)}$ for compatibility of notations. 
	In the following, most results hold more generally in the relative smooth setting for $X/S$, with $S$ a scheme over $\F_p$. For our later purpose, we will mainly restrict to the case $S=\Spec\,k$.
	
\subsection{PD differential operators and stratifications}\label{subsect PD diff}
Consider the diagonal embedding $X\hookrightarrow X\times_kX$ with ideal sheaf $I$. Then we have $\Omega_X^1=\Omega_{X/k}^1\simeq I/I^2$ as $\Ol_X$-modules.
Let $P_X=P_{X/k}$ denote the PD envelope of the diagonal embedding $X\hookrightarrow X\times_kX$ and $\P_X=\P_{X/k}$ its structure sheaf.  Let $J_X=J_{X/k}$ the ideal of $X$ in $P_{X}$.  Consider the completion of $\P_{X}$ with respect to the PD powers $J_{X}^{[n]}$: $\wh{\P}_{X}=\varprojlim_n\P_{X}/J_{X}^{[n]}$.  For any integer $n\geq 0$, denote \[\P_{X}^n=\P_{X}/J_{X}^{[n+1]}.\] For $n=1$, we have a natural isomorphism $\Omega_X^1\simeq J_X/J_X^{[2]}$ and a split exact sequence of $\Ol_X$-modules
\[0\lra \Omega_X^1\lra \P_{X}^1\lra \Ol_X\lra 0. \]

We recall the basic notions of PD differential operators and PD stratifications as in \cite{BO} section 4.
For $\Ol_X$-modules $\E$ and $\Fc$,  a PD differential operator $\E\ra \Fc$ of order $\leq n$ is an $\Ol_X$-linear map $\P^n_X\otimes\E\ra \Fc$, and an HPD differential operator $\E\ra \Fc$ is an $\Ol_X$-linear map $\P_X\otimes \E\ra \Fc$. A PD stratification on $\E$ over $\wh{\P}_X$ is a collection $(\varepsilon_n)_{n\geq 0}$ of isomorphisms
\[\varepsilon_n: \P^n_X\otimes\E\lra \E\otimes\P^n_X\]
such that $\varepsilon_0=\tr{id}_\E$, each $\varepsilon_n$ is $\P^n_X$-linear, when $n$ varies these $\varepsilon_n$ are compatible under the projections between $\P_X^n$, and for all $m$ and $n$, a cocycle condition holds; see \cite{BO} Definition 4.3 for the precise meaning of this condition. An HPD stratification on $\E$ over $\P_X$ is a $\P_X$-linear isomorphism \[\varepsilon: \P_X\otimes\E\lra \E\otimes\P_X\] such that $\varepsilon$ reduces to the identity mod $J_X$ and the cocycle condition holds.

Let $\D_X=\D_{X/k}$ be the sheaf of algebraic differential operators generated by $\Ol_X$ and the tangent sheaf $\T_X=\T_{X/k}$, subject to the module and commutator relations 
\begin{itemize}
	\item $f\cdot \partial =f\partial, \quad \partial\cdot f-f\cdot \partial=\partial(f), \quad \partial\in \T_X, f\in \Ol_X$,
	\item $\partial_1\cdot \partial_2-\partial_2\cdot\partial_1=[\partial_1, \partial_2], \quad \partial_1, \partial_2\in \T_X$. 
\end{itemize}
In other words, $\D_X$ is the envelope algebra of the tangent Lie algebroid $\T_X$. One can check that $\D_X$ coincides with the sheaf of PD differential operators on $\Ol_X$ as defined above. The sheaf $\D_X$ carries a natural increasing filtration $F_\bullet=\D_X^{\leq\bullet}$ by the degree of differential operators $\D_X=\bigcup_n\D_X^{\leq n}$, with $\D_X^{\leq 0}=\Ol_X, \D_X^{\leq n+1}=\D_X^{\leq n}+\T_X\cdot \D_X^{\leq n}$. As in the case of characteristic zero, one can check that the associated graded sheaf can be described as the symmetric algebra of $\T_X$: \[\gr_F\D_X\simeq \Sym\T_X.\]
The sheaf $\D_{X}$ is dual to $\wh{\P}_{X}$ in the sense that we have perfect pairing $\D_{X}\times \wh{\P}_{X}\ra \Ol_X$, which induces
an isomorphism of $\Ol_X$-modules
\[\D_{X}^{\leq n}\simeq \H om(\P_{X}^n,\Ol_X), \quad \D_{X}\simeq \varinjlim_n\H om(\P_{X}^n,\Ol_X),\]
where we use the left $\Ol_X$-module structure on $\P_{X}$.

\subsection{Flat vector bundles in characteristic $p$ and $p$-curvature}

Let $\E$ be a vector bundle over $X$, considered as a locally free sheaf of finite rank. Recall a connection $\nabla$ on $\E$ is given by a $k$-linear map of sheaves \[\nabla: \E\ra \E\otimes \Omega^1_{X},\] satisfying the Leibniz rule \[\nabla(fs)=f\nabla(s)+s\otimes df,\] for sections $f, s$ of $\E$ and $\Ol_X$ respectively. The connection $\nabla$  is called flat (or integrable), if \[\nabla_1\circ\nabla=0: \E\ra \E\otimes \Omega^2_{X},\]where $\nabla_1: \E\otimes \Omega^1_{X}\ra \E\otimes \Omega^2_{X}$ is the induced map given by $\nabla_1(e\otimes \omega)=e\otimes d\omega - \nabla(e)\wedge \omega$, for sections $e, \omega$ of $\E$ and $\Omega_{X}^1$ respectively.
In this case, we call $(\E,\nabla)$ a flat vector bundle. The datum of a connection $\nabla$ on $\E$ is equivalent to an $\Ol_X$-linear morphism of sheaves
\[\nabla: \T_{X}\ra \mathcal{E}nd_k(\E),\] and $\nabla$ is flat if and only if the map $\nabla: \T_{X}\ra \mathcal{E}nd_k(\E)$ is also a Lie algebra homomorphism, cf. \cite{Kat70} section 1.  We can generalize the above definition to more general quasi-coherent sheaves. Let $\MIC(X)$ be the category of quasi-coherent $\Ol_X$-modules with flat connections. We have the following equivalent description of this category.
\begin{theorem}[\cite{BO} Theorems 4.8 and 4.12]\label{thm flat connection}
Let $\E$ be a quasi-coherent $\Ol_X$-module. Then the following structures on $\E$ are equivalent:
\begin{enumerate}
\item a flat connection $\nabla: \E\ra \E\otimes\Omega_X^1$,
 \item a quasi-coherent left $\D_X$-module structure on $\E$,
\item a PD stratification on $\E$.
\end{enumerate}
The flat connection $\nabla$ is quasi-nilpotent (in the sense of \cite{BO} Definition 4.10)
if and only if the stratification is HPD.
\end{theorem}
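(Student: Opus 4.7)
The plan is to build explicit functors implementing each pairwise equivalence and verify that they are mutually quasi-inverse. Morally, the three structures correspond to a Lie-algebroid action of $\T_X$ on $\E$, an associative action of its enveloping algebra $\D_X$, and the dual geometric datum of a compatible family of isomorphisms on infinitesimal PD-neighborhoods of the diagonal.

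For (1) $\Leftrightarrow$ (2), I would invoke the universal property of $\D_X$ as the PD-enveloping algebra of the tangent Lie algebroid. A flat connection sends $\partial \in \T_X$ to $\nabla_\partial \in \mathcal{E}nd_k(\E)$; the Leibniz rule yields the commutator relation $[\nabla_\partial, f] = \partial(f)$, while flatness yields $[\nabla_{\partial_1}, \nabla_{\partial_2}] = \nabla_{[\partial_1, \partial_2]}$. These are precisely the defining relations of $\D_X$, so the map extends uniquely to a morphism of $k$-algebras $\D_X \to \mathcal{E}nd_k(\E)$; conversely, restricting a $\D_X$-module structure along $\T_X \hookrightarrow \D_X^{\leq 1}$ recovers the flat connection, and the two assignments are mutually inverse.

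For (2) $\Leftrightarrow$ (3), I would exploit the duality $\D_X^{\leq n} \simeq \H om_{\Ol_X}(\P_X^n, \Ol_X)$ recalled in Section~\ref{subsect PD diff}. By adjunction, a $\P_X^n$-linear isomorphism $\varepsilon_n: \P_X^n \otimes \E \to \E \otimes \P_X^n$ reducing to the identity mod $J_X$ is the same datum as an $\Ol_X$-linear action map $\D_X^{\leq n} \otimes \E \to \E$ extending the tautological $\Ol_X$-action; the cocycle condition on the compatible family $(\varepsilon_n)$ then matches the associativity of the $\D_X$-action under the algebra isomorphism $\D_X \simeq \varinjlim_n \H om(\P_X^n, \Ol_X)$. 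Combined with the first equivalence, this yields (1) $\Leftrightarrow$ (3), and one may read the connection directly off $\varepsilon_1$ via the split sequence $0 \to \Omega_X^1 \to \P_X^1 \to \Ol_X \to 0$, with the Leibniz rule translating into $\varepsilon_1$ being compatible with both $\Ol_X$-module structures on $\P_X^1$.

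The last assertion, on quasi-nilpotence versus HPD, is where I expect the chief obstacle, since it hinges on the distinction between $\P_X$ and its completion $\wh{\P}_X$. I would work locally in \'etale coordinates $t_1,\dots,t_d$ with divided-power generators $\xi_i = 1 \otimes t_i - t_i \otimes 1$, for which the stratification takes the explicit Taylor-type shape $\varepsilon(1 \otimes e) = \sum_I \nabla^{[I]}(e) \otimes \xi^{[I]}$. This sum lies in $\P_X$, rather than merely in $\wh{\P}_X$, exactly when Zariski-locally on any section of $\E$ only finitely many of the operators $\nabla^{[I]}$ are nonzero, which is the definition of quasi-nilpotence. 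The remaining care is to verify that the resulting local isomorphism is coordinate-independent and globalizes, which I would handle using the compatibility of $\varepsilon$ with the symmetric PD-algebra structure on $\P_X$ and the functoriality of divided-power envelopes.
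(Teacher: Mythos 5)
The paper itself does not prove this statement; it cites it directly from Berthelot--Ogus (Theorems 4.8 and 4.12 of \cite{BO}), so there is no in-paper argument to compare against. Your sketch faithfully reproduces the standard \cite{BO} route: the universal property of $\D_X$ as the enveloping algebra of the tangent Lie algebroid gives (1) $\Leftrightarrow$ (2); the $\Ol_X$-module duality $\D_X^{\leq n} \simeq \H om(\P_X^n, \Ol_X)$ together with the cocycle condition gives (2) $\Leftrightarrow$ (3); and the local Taylor expansion $\varepsilon(1\otimes e)=\sum_I \nabla^I(e)\otimes\xi^{[I]}$ lying in $\E\otimes\P_X$ rather than $\E\otimes\wh{\P}_X$ identifies HPD with quasi-nilpotence. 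One small notational caution: write $\nabla^I$ for the iterated operator dual to $\xi^{[I]}$, not $\nabla^{[I]}$ --- the divided powers live entirely on the $\P_X$ side of the pairing, and there is no divided-power operation on differential operators in characteristic $p$. With that adjustment the argument is correct and complete at the level of a sketch.
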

In particular, we have
\[\MIC(X)=\Qcoh(\D_X).\] 
We view $\P_{X}$ as an $\Ol_X$-module from the right, which carries a canonical HPD stratification. Let $\nabla_\P$ be the integral connection on $\P_{X}$ corresponding to this stratification. Then $\nabla_\P(J_{X}^{[n]})\subset J_{X}^{[n-1]}\otimes\Omega^1_{X}$. Moreover $\nabla_\P$ induces an integral connection $\nabla_{\wh{\P}}$ on $\wh{\P}_{X}$.
\begin{remark}
The datum $(\E,\nabla)$ of a quasi-coherent $\Ol_X$-module together with a flat connection can be also described as a quasi-coherent sheaf over the de Rham stack $(X/k)^{dR}$ attached to $X/k$. This perspective is due to Simpson. See Theorem \ref{thm F-gauge via stack} for a similar statement for certain related objects with much richer structure.
\end{remark}

As we are working in characteristic $p$, there is a concept of $p$-curvature attached to each flat connection $(\E,\nabla)$, which plays a very important role in the theory. First note that if $\partial$ is a section of $\T_{X}$, then so is its $p$-th iterate $\partial^{(p)}$, which can be checked by viewing $\partial$ as a $k$-derivation on regular functions and using  the Leibniz rule. Now, for a flat vector bundle $(\E,\nabla)$, its $p$-curvature is defined as the map of sheaves
\[\psi_\nabla: \T_{X}\ra \mathcal{E}nd_k(\E),\quad \partial \mapsto \nabla(\partial)^p-\nabla(\partial^{(p)}). \] 
Therefore $\psi_\nabla=0$ if and only if the map $\nabla: \T_{X}\ra \mathcal{E}nd_k(\E)$ is a $p$-restricted Lie algebra homomorphism. In the general case, by \cite{Kat70} Proposition 5.2, $\psi_\nabla$ is $p$-linear. Therefore, as $Fr_X^\ast\T_X=F_{X/k}^\ast\T_{X'}$, the datum $\psi_\nabla$ is equivalent to  an $\Ol_X$-linear map \[\psi_\nabla: \E\ra \E\otimes_{\Ol_X} F_{X/k}^\ast \Omega^1_{X'}.\]
If $\dim\,X=n$ and $U\subset X$ is an open subset which is \'etale over $\A_k^n$ with coordinates $x_1,\dots, x_n$, then over $U$, the $p$-curvature $\psi_\nabla$ is determined by the endomorphisms of $\E|_U$: \[\psi_\nabla(\partial_1), \,\cdots,\, \psi_\nabla(\partial_n),\] where $\partial_i=\frac{\partial}{\partial x_i}$  $(1\leq i\leq n)$ is the induced base of $\T_X$ over $U$. Note that $\partial_i^{(p)}=0$ for all $1\leq i\leq n$, thus $\psi_\nabla(\partial_i)=\nabla(\partial_i)^p$.

Let $\Qcoh(X')$ be the category of quasi-coherent $\Ol_{X'}$-modules.
Recall the following classical Cartier descent:
\begin{theorem}[\cite{Kat70} Theorem 5.1]\label{thm cartier descent}
The functor $\H\mapsto (F_{X/k}^\ast\H, \nabla^{can})$ induces an equivalence of categories \[\Qcoh(X')\st{\sim}{\lra} \MIC(X)^{\psi_\nabla=0}, \] where $\MIC(X)^{\psi_\nabla=0}$ is the full subcategory of $\MIC(X)$  consists of $(\E, \nabla)$ with zero $p$-curvature. The quasi-inverse of the above functor is given by \[\MIC(X)^{\psi_\nabla=0}\lra \Qcoh(X'),\quad (\E, \nabla)\mapsto \E^{\nabla}.\]
\end{theorem}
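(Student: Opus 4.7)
The plan is to construct the two functors, verify they land in the claimed subcategories, and show they are mutually quasi-inverse via a local computation in étale coordinates.

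First, for $\H \in \Qcoh(X')$, I would define $\nabla^{can}$ on $F_{X/k}^*\H$ by the formula $\nabla^{can}(f \otimes s) = df \otimes s$ for $f$ a local section of $\Ol_X$ and $s$ a local section of $F_{X/k}^{-1}\H$; flatness is immediate from $d^2 = 0$. To verify $\psi_{\nabla^{can}} = 0$, I would work in étale coordinates $x_1, \dots, x_n$ on a small affine open: iterating $\partial_i(x_j) = \delta_{ij}$ shows $\partial_i^{(p)} = 0$, so $\psi_{\nabla^{can}}(\partial_i) = \nabla^{can}(\partial_i)^p$; this sends $f \otimes s$ to $\partial_i^p(f) \otimes s$, which vanishes by the same reasoning applied to $\Ol_X$.

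Second, for $(\E, \nabla) \in \MIC(X)^{\psi_\nabla = 0}$, I would check that $\E^\nabla$ descends naturally to a quasi-coherent sheaf on $X'$. The key input is the identification $(\Ol_X)^{\nabla^{can}} = F_{X/k}^{-1}\Ol_{X'}$, which makes $\E^\nabla$ an $F_{X/k}^{-1}\Ol_{X'}$-module; quasi-coherence then reduces to an étale-local verification.

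Third, and this is the heart of the argument, one must verify that the unit $\H \to (F_{X/k}^*\H)^{\nabla^{can}}$ and counit $F_{X/k}^*(\E^\nabla) \to \E$ are isomorphisms. Both assertions are étale-local on $X$, so I would work in coordinates as above. The endomorphisms $T_i := \nabla(\partial_i)$ of $\E$ mutually commute (flatness of $\nabla$) and satisfy $T_i^p = 0$ (since $\partial_i^{(p)} = 0$ and $\psi_\nabla = 0$). From this data I would construct an explicit horizontal projector $\E \to \E^\nabla$ via a Taylor-series style formula built from the $T_i$ and the coordinates $x_i$, and verify that it exhibits $\E$ étale-locally as $\Ol_X \otimes_{F_{X/k}^{-1}\Ol_{X'}} \E^\nabla$.

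The main obstacle is this final local trivialization. It rests on the fact that $\Ol_X$ is étale-locally free of rank $p^n$ over $F_{X/k}^{-1}\Ol_{X'}$, with basis $\{x^a : 0 \leq a_i \leq p-1\}$, and equivalently on the étale-local splitting of the Azumaya algebra over $\Ol_{X'}$ generated by $\Ol_X$ together with the $T_i$ subject to $T_i^p = 0$ and the commutator relations $[T_i, f] = \partial_i(f)$. This splitting packages the somewhat delicate Taylor projector into the cleaner statement that modules over this Azumaya algebra correspond to $\Ol_{X'}$-modules, which is precisely the desired equivalence.
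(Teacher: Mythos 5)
The paper does not supply its own proof of this statement: it is cited verbatim from Katz, \cite{Kat70} Theorem 5.1, so there is nothing in the text to compare against directly. Your proposal reconstructs essentially Katz's original argument and is correct. The ingredients you line up are exactly the standard ones: the canonical connection $\nabla^{can}(f\otimes s)=df\otimes s$ and the vanishing of its $p$-curvature via $\partial_i^{(p)}=0$ in \'etale coordinates; the identification of $\ker(d\colon\Ol_X\to\Omega^1_X)$ with $F_{X/k}^{-1}\Ol_{X'}$ so that $\E^\nabla$ is naturally an $\Ol_{X'}$-module; and the local trivialization, where you use that the $T_i=\nabla(\partial_i)$ commute (flatness) and satisfy $T_i^p=0$ (zero $p$-curvature plus $\partial_i^{(p)}=0$) to build the horizontal projector and identify $\E$ with $\Ol_X\otimes_{F_{X/k}^{-1}\Ol_{X'}}\E^\nabla$. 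All of this is \'etale-local, and both assertions to be checked (quasi-coherence and isomorphism of the unit/counit) are \'etale-local, so the reduction to coordinates is legitimate; the argument also works for arbitrary quasi-coherent $\E$, not just vector bundles, since freeness of $F_{X/k\ast}\Ol_X$ of rank $p^n$ over $\Ol_{X'}$ is what drives it.

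Your closing remark packaging the projector as an instance of Morita theory for the split Azumaya algebra $\D_X/\J_X\simeq\E nd_{F_{X/k}^{-1}\Ol_{X'}}(\Ol_X)$ is a genuinely different, more structural route than Katz's explicit computation, and it is exactly the perspective the paper adopts later in subsection 2.3 following \cite{BMR} and \cite{OV}: the condition $\psi_\nabla=0$ is precisely the statement that the $\D_X$-module structure factors through the quotient $\D_X\to\D_X/\J_X$, and Morita equivalence for this matrix algebra gives the theorem directly. The explicit Taylor projector is elementary and self-contained; the Azumaya/Morita route costs a little more machinery up front but is what scales to the deformed situation (nonzero $p$-curvature and the full Ogus--Vologodsky correspondence) that the paper needs afterwards. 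Either version would serve as a complete proof here.
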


In \cite{Kat70}, a flat connection $(\E, \nabla)$ on $X$ is called nilpotent of level $\leq N$, if the map \[\psi_\nabla^N: \E\ra \E\otimes (F^\ast_{X/k}\Omega^1_{X'})^{\otimes N}\] induced by the $p$-curvature $\psi_\nabla$ is zero. One can check that this notion is equivalent to the notion of quasi-nilpotent above (\cite{BO} Definition 4.10). By \cite{Kat70} Corollary 5.5, $(\E, \nabla)$ is nilpotent of level $N$ if and only if there exists a decreasing filtration of length $N$ \[\E=\Fil^0\supset \Fil^1\supset\cdots\supset\Fil^{N-1}\supset\Fil^N=0\] which is horizontal with respect to $\nabla$, such that the associated graded connection on $\gr_\Fil\E$ has zero $p$-curvature.
Flat connections of geometric origin are nilpotent:
\begin{example}[Katz, \cite{Kat70} Theorem 5.10]
	Let $f: Y\ra X$ be a proper smooth morphism between smooth algebraic varieties over $k$. Then for each $i\geq 0$, the relative de Rham cohomology $R^if_{dR\ast}\Ol_Y$ admits a canonical flat connection, called the Gauss-Manin connection, which is nilpotent.
\end{example}

Let $n=\dim\,X$.
For a flat vector bundle $(\E,\nabla)$, we have the de Rham complex \[DR(\E,\nabla)=(\E\otimes\Omega_{X}^\bullet, d^\bullet).\] More explicitly, this is the following complex of $\Ol_X$-modules
\[0\ra \E\st{d^0}{\lra} \E\otimes \Omega_{X}^1\st{d^1}{\lra} \E\otimes\Omega_{X}^2\st{d^2}{\lra} \cdots \st{d^{n-1}}{\lra} \E\otimes \Omega_{X}^n\lra 0, \]
where $d^0=\nabla$ and for $0\leq i\leq n$, \[d^i(e\otimes \omega)= e\otimes d\omega+ (-1)^i\nabla(e)\wedge \omega\] sections $e, \omega$ of $\E$ and $\Omega_X^i$ respectively. Note that although the differentials $d^i$ are not $\Ol_X$-linear, they are $F_{X/k}^{-1}\Ol_{X'}$-linear (note also $\Ol_X$ is  locally free of rank $p^n$ over $F_{X/k}^{-1}\Ol_{X'}$). As a result, the complex of $\Ol_{X'}$-modules $F_{X/k\ast}DR(\E,\nabla)$ is $\Ol_{X'}$-linear.
Following Grothendieck,
the de Rham cohomology of $X/k$ with coefficient in $(\E,\nabla)$ is defined by the hypercohomology of $DR(\E,\nabla)$
\[H^\ast_{\tr{dR}}(X/k,(\E,\nabla))=R^\ast\Gamma(X, DR(\E,\nabla)).\]
On the other hand, one can check that the $p$-curvature $\psi_\nabla$ satisfies an integral condition (cf. \cite{Ogus}): let $\psi^0=\psi$, and for $0\leq i\leq n$ let $\psi^i$ be the composition of \[\E\otimes F_{X/k}^\ast\Omega_{X'}^i\st{\psi\otimes id}{\lra} \E\otimes F_{X/k}^\ast\Omega_{X'}^1\otimes F_{X/k}^\ast\Omega_{X'}^i \st{id\otimes \wedge}{\lra} \E\otimes F_{X/k}^\ast\Omega_{X'}^{i+1},\]  then $\psi^i\circ \psi^{i-1}=0$.
Therefore,
we have the $p$-curvature complex
\[K(\E,\psi)=(\E\otimes F_{X/k}^\ast\Omega_{X'}^\bullet, \psi^\bullet)\] in the following form
\[0\ra \E\st{\psi^0}{\lra} \E\otimes F_{X/k}^\ast\Omega_{X'}^1\st{\psi^1}{\lra}  \E\otimes F_{X/k}^\ast\Omega_{X'}^2\st{\psi^2}{\lra}  \cdots \st{\psi^{n-1}}{\lra}  \E\otimes F_{X/k}^\ast\Omega_{X'}^n\lra 0.\]
Note that in case of $\psi_\nabla=0$, e.g. $(\E,\nabla)=(\Ol_X, d)$ the constant connection, the associated $p$-curvature complex has trivial differentials. Sometimes it is useful to consider the $\Ol_{X'}$-linear $p$-curvature complex $F_{X/k\ast}K(\E,\psi)$ on $X'$, which by the projection formula has the form
\[0\ra F_{X/k\ast}\E\st{\psi^0}{\lra} F_{X/k\ast}\E\otimes \Omega_{X'}^1\st{\psi^1}{\lra}  F_{X/k\ast}\E\otimes \Omega_{X'}^2\st{\psi^2}{\lra}  \cdots \st{\psi^{n-1}}{\lra}  F_{X/k\ast}\E\otimes \Omega_{X'}^n\lra 0.\]

Since $k$ is of characteristic $p$, the differentials of $F_{X/k\ast}\Omega^\bullet_{X}$ are $\Ol_{X'}$-linear. If $x$ is a local section of $\Ol_X$, then $d\pi_X^\ast(x)$ is a local section of $\Omega^1_{X'}$. One checks that there exists a unique homomorphism of $\Ol_{X'}$-modules \[C^{-1}: \Omega^1_{X'}\ra \H^1(F_{X/k\ast}\Omega^\bullet_{X})\] such that $C^{-1}(d\pi_X^\ast(x))$ is the class of $x^{p-1}dx$.
Recall the following classical theorem due to Cartier:
\begin{theorem}[\cite{Kat70} Theorem 7.2]\label{thm cartier isom}
	For all $i\geq 0$, there exists a unique isomorphism of $\Ol_{X'}$-modules
	\[C_i^{-1}: \Omega^i_{X'}\st{\sim}{\lra} \H^i(F_{X/k\ast}\Omega^\bullet_{X}) \]
	such that
	\[C_0^{-1}(1)=1,\quad C_1^{-1}=C^{-1},\quad C_{i+j}^{-1}(\omega\wedge\omega')=C_i^{-1}(\omega)\wedge C_j^{-1}(\omega'),\]
	for local sections $\omega$ and $\omega'$ of $\Omega^i_{X'}$ and $\Omega^j_{X'}$ respectively.
\end{theorem}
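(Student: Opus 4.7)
The plan is to build $C_i^{-1}$ in three stages: first construct the degree one map, then extend multiplicatively to all higher degrees via the wedge product on both sides, and finally verify the result is an isomorphism by an étale-local computation. Uniqueness will be essentially automatic from the multiplicativity condition imposed in the statement.

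For degree one, the proposed formula $d\pi_X^\ast(y)\mapsto[y^{p-1}dy]$ requires two verifications. First, $y^{p-1}dy$ is $d$-closed, as $d(y^{p-1}dy)$ lies in $\Omega^2_X$ and vanishes because $dy\wedge dy=0$. Second, the assignment must be additive modulo exact forms, which is where the real work lies. I would invoke the universal identity
\[(Y_1+Y_2)^{p-1}\,d(Y_1+Y_2)-Y_1^{p-1}dY_1-Y_2^{p-1}dY_2=\tfrac{1}{p}\,d\bigl((Y_1+Y_2)^p-Y_1^p-Y_2^p\bigr),\]
which holds over $\Z[1/p]$. Its right-hand side is already $\Z$-integral since $(Y_1+Y_2)^p-Y_1^p-Y_2^p\in p\Z[Y_1,Y_2]$, and reducing mod $p$ exhibits the desired difference as an exact $1$-form on $X$. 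The $\Ol_{X'}$-linearity $C_1^{-1}(f\,d\pi_X^\ast(y))=\pi_X^\ast(f)\cdot[y^{p-1}dy]$ then follows from the additivity together with the definition on generators, since every section of $\Omega^1_{X'}$ is a local sum of such terms.

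To extend to higher degrees, I would exploit the universal property of the exterior algebra $\Omega^\bullet_{X'}=\bigwedge^\bullet\Omega^1_{X'}$. The wedge product on $\Omega^\bullet_X$ induces a cup product making $\H^\bullet(F_{X/k\ast}\Omega^\bullet_X)$ into a graded-commutative $\Ol_{X'}$-algebra, so $C_1^{-1}$ extends uniquely to a graded $\Ol_{X'}$-algebra homomorphism provided $C_1^{-1}(\omega)\smile C_1^{-1}(\omega)=0$ for every $\omega\in\Omega^1_{X'}$. For $p\neq 2$ this is immediate from graded commutativity; for $p=2$ it is checked directly from $(y^{p-1}dy)\wedge(y^{p-1}dy)=y^{2p-2}(dy)^2=0$. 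The uniqueness clause of the theorem is now forced: the three normalizations $C_0^{-1}(1)=1$, $C_1^{-1}=C^{-1}$, and multiplicativity determine the map on all algebra generators.

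Finally, to show each $C_i^{-1}$ is an isomorphism, the question is étale local on $X$, because $\Omega^\bullet$ and the relative Frobenius are compatible with étale base change, and the construction of $C^{-1}$ is functorial. I would reduce to $X=\Spec k[x_1,\ldots,x_n]$, where the de Rham complex $\Omega^\bullet_X$ factors as the tensor product over $k$ of the single-variable complexes $k[x_j]\xrightarrow{d}k[x_j]\,dx_j$. Since each factor is a complex of free $k$-modules, the Künneth formula reduces the computation to the case $n=1$, where a direct calculation shows $\H^0(F_{\A^1/k\ast}\Omega^\bullet_{\A^1})=k[x^p]=\Ol_{X'}$ and $\H^1$ is the free $\Ol_{X'}$-module of rank one generated by the class $[x^{p-1}dx]$; this matches $\Omega^1_{X'}$ under $C_1^{-1}$. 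The main obstacle is really the additivity identity underlying the construction of $C_1^{-1}$; once that is in hand, the rest of the proof is formal algebraic bookkeeping plus the clean one-variable computation.
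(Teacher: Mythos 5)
The paper cites Theorem~7.2 of \cite{Kat70} without reproducing a proof, so the comparison is against Katz's original argument, which your plan reconstructs faithfully: the degree-one map via the divided-by-$p$ binomial identity, multiplicative extension through the universal property of the exterior algebra, and the \'etale-local K\"unneth reduction to $\A^1_k$ are exactly the classical steps. Two points are worth making explicit in a full write-up. First, well-definedness of the $\Ol_{X'}$-linear map on $\Omega^1_{X'}$ requires, beyond additivity, the Leibniz compatibility $[(yz)^{p-1}d(yz)] = \pi_X^\ast(y)\cdot[z^{p-1}dz] + \pi_X^\ast(z)\cdot[y^{p-1}dy]$, where $\pi_X^\ast(y)$ acts on $\H^1(F_{X/k\ast}\Omega^\bullet_X)$ as multiplication by $y^p$ via $F_{X/k}^\sharp$; only then can one invoke the universal property of K\"ahler differentials of $\Ol_{X'}/k$. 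Second, for $p=2$ the square-zero check $(C_1^{-1}\omega)^2=0$ must hold for every section $\omega$ of $\Omega^1_{X'}$, not merely the generators $d\pi_X^\ast(y)$; this propagates because the cross terms $a_ia_j+a_ja_i$ of degree-one cohomology classes vanish in any characteristic by graded-commutativity. Both are precisely the ``formal algebraic bookkeeping'' you flag, and with them filled in the argument is complete and matches the standard proof.
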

We discuss a little more about the inverse Cartier isomorphisms.
First, for each $i$, the original inverse Cartier isomorphism \[C_i^{-1}: \Omega^i_{X}\st{\sim}{\lra} \H^i(\Omega^\bullet_{X})\] is $p$-linear (cf. \cite{Kat72} 7.1.4 for some discussions on the original Cartier isomorphism and the version above). Therefore it is equivalent to an $\Ol_X$-linear isomorphism
\[C_i^{-1}: \Omega^i_{X}\st{\sim}{\lra} \H^i(Fr_{X\ast}\Omega^\bullet_{X}).\] Next,
as $\Omega^i_{X'}=\pi_X^\ast\Omega^i_{X}$, we can rewrite $C_i^{-1}$ as \[\pi_X^\ast\Omega^i_{X}\st{\sim}{\lra} \H^i(F_{X/k\ast}\Omega^\bullet_{X}).\] We have the natural decreasing Hodge filtration $C^\bullet$ on the de Rham complex $\Omega^\bullet_{X}$ with
\[C^i(\Omega^\bullet_{X})^j=\begin{cases} 0, & j< i\\
\Omega^j_{X}, & j\geq i \end{cases}\] and
 \[\gr^i_{C}\Omega^\bullet_{X}=\Omega^i_{X}[-i].\] On the other hand, we have also
the increasing conjugate filtration $D_\bullet$ on $F_{X/k\ast}\Omega^\bullet_{X}$ with
\[ D_i(F_{X/k\ast}\Omega^\bullet_{X})^j=\tau_{\leq i}(F_{X/k\ast}\Omega^\bullet_{X})^j=\begin{cases}
F_{X/k\ast}\Omega^j_{X}, & j< i\\
\ker( F_{X/k\ast}\Omega^i_{X} \ra F_{X/k\ast}\Omega^{i+1}_{X} ),& j=i\\
0, & j>i
\end{cases} \] and
 \[\gr^i_{D}F_{X/k\ast}\Omega^\bullet_{X}=\H^i(F_{X/k\ast}\Omega^\bullet_{X})[-i].\] 
 The  inverse Cartier isomorphism $C_i^{-1}$ can be rewritten as
\[C_i^{-1}:  \pi_X^\ast\gr^i_{C}\Omega^\bullet_{X} \st{\sim}{\lra} \gr^i_{D}F_{X/k\ast}\Omega^\bullet_{X}.\]

\subsection{The Azumaya algebra property and the conjugate filtration of $\D_X$}\label{subsection conj fil}

In characteristic $p$, the sheaf $\D_X$ has a large center $Z(\D_X)$ (contrary to the characteristic zero setting). To study the center, sometimes it is convenient to consider $F_{X/k\ast}\D_X$ instead of $\D_X$. Note that as $F_{X/k}$ is affine, the direct image functor $F_{X/k \ast}$ induces an equivalence of categories: \[\Qcoh(\D_X)\simeq \Qcoh(F_{X/k\ast}\D_X).\]
A key observation of Bezrukavnikov-Mirkovi\'c-Rumynin is
\begin{theorem}[\cite{BMR}]\label{thm center D}
\begin{enumerate}
\item
The $p$-curvature map $\psi: \T_{X'}\ra F_{X/k\ast}\D_X, v\mapsto v^p-v^{(p)}$ factors through the center $Z(F_{X/k\ast}\D_X)$ of $F_{X/k\ast}\D_X$, and induces an isomorphism
\[\Sym \T_{X'} \simeq Z(F_{X/k\ast}\D_X). \]
\item The sheaf $F_{X/k\ast}\D_X$ is an Azumaya algebra of rank $p^{2\dim X}$ over $\Sym \T_{X'}$.
\end{enumerate}
\end{theorem}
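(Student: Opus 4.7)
The plan is to reduce both parts to \'etale-local coordinate computations and then upgrade to global statements by functoriality and \'etale descent. Pick an open $U \subset X$ with \'etale coordinates $x_1, \dots, x_n$ and derivations $\partial_i = \partial/\partial x_i$ satisfying $\partial_i^{(p)} = 0$, so that $\psi(\partial_i) = \partial_i^p$ on $U$. For centrality in part (1), the key inputs are two characteristic-$p$ identities. First, in any associative $\F_p$-algebra the commuting operators $L_v$, $R_v$ of left and right multiplication satisfy $(L_v - R_v)^p = L_v^p - R_v^p$, yielding Jacobson's formula $\mathrm{ad}(v^p) = (\mathrm{ad}\, v)^p$ inside $\D_X$. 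Second, the restricted structure on the tangent Lie algebroid is set up so that $v^{(p)}$ acts on $\Ol_X$ as the $p$-fold iterate of $v$ and satisfies $\mathrm{ad}(v^{(p)}) = (\mathrm{ad}\, v)^p$ on $\T_X$. Testing $[\psi(v), -]$ on the two classes of generators of $\D_X$ then gives zero: on a function $f$, all interior binomial coefficients in the Leibniz expansion of $v^p(fg)$ vanish in characteristic $p$, so $[v^p, f] = v^p(f) = v^{(p)}(f) = [v^{(p)}, f]$; on a vector field $w$, Jacobson together with the restricted-Lie axiom gives $[v^p, w] = (\mathrm{ad}\, v)^p(w) = [v^{(p)}, w]$.

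Next I would exploit the $p$-linearity of $\psi$ recalled in Section 2.2: $\psi$ factors through an $\Ol_{X'}$-linear map $\T_{X'} \to F_{X/k\ast} Z(\D_X)$, which extends uniquely to an $\Ol_{X'}$-algebra homomorphism $\Sym\, \T_{X'} \to Z(F_{X/k\ast}\D_X)$ by the universal property of the symmetric algebra. To see this is an isomorphism I would work locally with the PBW basis $\{x^J \partial^I\}_{J,I\in\N^n}$ of $\D_X$. A direct commutator computation gives $[\partial_k, x^J \partial^I] = J_k x^{J - e_k}\partial^I$ and $[x_k, x^J\partial^I] = -I_k x^J\partial^{I - e_k}$, so commuting with every $x_k$ and every $\partial_k$ forces $p \mid I_k$ and $p \mid J_k$ for each nonzero coefficient. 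The center is therefore $\Ol_{X'}[\partial_1^p, \dots, \partial_n^p]$, which is exactly the image of $\Sym\, \T_{X'}$.

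For part (2), the same coordinate analysis shows that $\{x^J \partial^I : 0 \le J_k, I_k < p\}$ is a free $Z$-basis of $F_{X/k\ast}\D_X$, yielding local freeness of rank $p^{2n}$. To verify the Azumaya property I would compute the fibre at a geometric point $y$ of $\Spec Z = T^\ast X'$: after base change to an algebraic closure and translation by $p$-th roots of the parameters $a_i = x_i^p(y)$, $b_i = \partial_i^p(y)$, the fibre becomes the $p^{2n}$-dimensional algebra
\[
\kappa \langle \bar x_i, \bar \partial_i \rangle \big/ \bigl([\bar\partial_i, \bar x_j] - \delta_{ij},\ \bar x_i^p,\ \bar\partial_i^p \bigr),
\]
which acts faithfully on the $p^n$-dimensional module $\kappa[\bar x]/(\bar x_i^p)$ and hence equals $\mathrm{Mat}_{p^n}(\kappa)$ by a dimension count. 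Equivalently, the canonical map $F_{X/k\ast}\D_X \otimes_Z (F_{X/k\ast}\D_X)^{op} \to \mathcal{E}nd_Z(F_{X/k\ast}\D_X)$ of locally free rank-$p^{4n}$ sheaves is surjective on every geometric fibre, hence an isomorphism.

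The main conceptual obstacle is the centrality step: it rests on the precise compatibility of two a priori unrelated $p$-th power operations, the associative one inside $\D_X$ and the restricted Lie one on $\T_X$, together with the vanishing of interior binomial coefficients in characteristic $p$. Once centrality and the $p$-linearity of $\psi$ are in hand, both the identification of $Z(F_{X/k\ast}\D_X)$ and the Azumaya property reduce to transparent PBW commutator computations and a fibrewise dimension count.
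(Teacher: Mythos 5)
Your proof is correct and follows essentially the same route as Bezrukavnikov--Mirkovi\'c--Rumynin, the reference the paper cites: centrality of $\psi$ via the Artin--Schreier/Jacobson identity $(\mathrm{ad}\,v)^p=\mathrm{ad}(v^p)$ together with the restricted Lie structure on $\T_X$, the identification of the center via the PBW coordinate calculation, and the Azumaya property by reducing to geometric fibres of $T^\ast X'$, where the reduced Weyl algebra is exhibited as $\mathrm{Mat}_{p^n}$ through its faithful action on $\kappa[\bar x]/(\bar x_i^p)$. The only point worth tightening is the transition from commuting with the \'etale coordinates $x_k$ and $\partial_k$ to commuting with all of $\Ol_X$ and $\T_X$: after the constraint $p\mid I_k$, $p\mid J_k$ you should note that monomials $\partial^I$ with all $I_k$ divisible by $p$ do commute with arbitrary $f\in\Ol_X$ (since $\partial_k^p$ kills $\Ol_X$ as a derivation and the interior binomials vanish), so testing on the coordinate generators suffices.
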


By adjunction, we have then
\[Fr_X^\ast\Sym \T_{X}=F_{X/k}^\ast\Sym\T_{X'} \simeq Z_{\D_X}(\Ol_X)\hookrightarrow\D_X, \]
where $Z_{\D_X}(\Ol_X)$ is the centralizer of $\Ol_X$ in $\D_X$, and for the second isomorphism see \cite{BMR} section 2. One checks easily that $\D_X$ is locally free of rank $p^{\dim X}$ over $Z_{\D_X}(\Ol_X)$. The proof of Theorem \ref{thm center D} (2) actually shows that 
\[F_{X/k}^\ast F_{X/k\ast}\D_X\simeq \E nd_{Z_{\D_X}(\Ol_X)}(\D_X). \]
Following \cite{OV} section 3.4, we introduce a decreasing filtration $N^\bullet$ on $\D_X$ by ideals
\[\cdots \subset \mathcal{J}_X^i\subset \cdots \subset \mathcal{J}_X\subset  \D_X,\]
where \[\mathcal{J}_X= \mathcal{J}_X^1=(F_{X/k}^\ast\Sym^1\T_{X'})\cdot\D_X,\quad \mathcal{J}_X^i=(F_{X/k}^\ast\Sym^i\T_{X'})\cdot\D_X\] are ideals of $\D_X$ generated by the corresponding one of $F_{X/k}^\ast\Sym\T_{X'} $. More precisely, this filtration is induced from the deceasing filtration $N^\bullet$ on $F_{X/k}^\ast\Sym \T_{X'}$ by ideals \[\cdots \subset \mathscr{J}_X^i\subset \cdots \subset \mathscr{J}_X\subset  F_{X/k}^\ast\Sym\T_{X'},\quad \tr{with}\quad \mathscr{J}_X^i:=F_{X/k}^\ast\Sym^i\T_{X'}.\] Then we have \[\gr_{N}F_{X/k}^\ast\Sym \T_{X'}\simeq F_{X/k}^\ast\Sym \T_{X'}.\] The natural inclusion \[F_{X/k}^\ast\Sym \T_{X'}\hookrightarrow \D_X\] given by $p$-curvature preserves the filtrations by construction. 
\begin{proposition}
The associated graded $\gr_N\D_X$ can be described as\footnote{Recall that for the order filtration $F_\bullet$ on $\D_X$, the associated graded $\gr_F\D_X\simeq \Sym \T_X$.}
\[\gr_N\D_X\simeq (\D_X/\J_X)\otimes_{\Ol_{X}}F_{X/k}^\ast\Sym\T_{X'},\]
which is a canonically split tensor Azumaya algebra, with \[F_{X/k}^\ast F_{X/k\ast}\Ol_X\otimes_{\Ol_{X}}F_{X/k}^\ast\Sym\T_{X'}\] the graded splitting module.
\end{proposition}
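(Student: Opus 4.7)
The plan is to reduce the claim to the Azumaya splitting already recorded in the excerpt. First, working Zariski-locally with \'etale coordinates $x_1,\dots,x_n$ and dual derivations $\partial_1,\dots,\partial_n$, a PBW-type argument for the enveloping algebra of the restricted Lie algebroid $\T_X$ shows that $\D_X$ is freely generated as an $\Ol_X$-module by the products
\[
\partial^\gamma\cdot(\partial^p-\partial^{(p)})^\beta,\quad 0\le \gamma_i<p,\ \beta\in\N^n.
\]
The factors $(\partial^p-\partial^{(p)})^\beta$ give an $\Ol_X$-basis of $F_{X/k}^\ast\Sym\T_{X'}$ and are central in $\D_X$ by Theorem \ref{thm center D}, while the monomials $\partial^\gamma$ descend to a local $\Ol_X$-basis of $\D_X/\J_X$. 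This yields the graded decomposition
\[
\gr_N^i\D_X\;\simeq\;(\D_X/\J_X)\otimes_{\Ol_X}F_{X/k}^\ast\Sym^i\T_{X'},
\]
and the decomposition respects the algebra structures because the elements $(\partial^p-\partial^{(p)})$ are already central in $\D_X$.

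For the Azumaya and splitting assertions, I would start from the identification already recorded in the excerpt,
\[
F_{X/k}^\ast F_{X/k\ast}\D_X\;\simeq\;\mathcal{E}nd_{F_{X/k}^\ast\Sym\T_{X'}}(\D_X),
\]
and reduce both sides modulo the ideal $\mathscr{J}_X=F_{X/k}^\ast\Sym^{\ge 1}\T_{X'}$ (a two-sided ideal, by centrality) to obtain $F_{X/k}^\ast F_{X/k\ast}(\D_X/\J_X)\simeq\mathcal{E}nd_{\Ol_X}(\D_X/\J_X)$; this shows in particular that $F_{X/k\ast}(\D_X/\J_X)$ is Azumaya of rank $p^{2n}$ over $\Ol_{X'}$. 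To identify the splitting module with $F_{X/k\ast}\Ol_X$, I would use the canonical action of $\D_X$ on $\Ol_X$ by differential operators: this action is $F_{X/k}^{-1}\Ol_{X'}$-linear (since $k$-derivations annihilate $p$-th powers) and factors through $\D_X/\J_X$ (as the canonical connection $(\Ol_X,d)$ has vanishing $p$-curvature). Pushing forward along $F_{X/k}$ gives an $\Ol_{X'}$-algebra map
\[
F_{X/k\ast}(\D_X/\J_X)\lra\mathcal{E}nd_{\Ol_{X'}}(F_{X/k\ast}\Ol_X),
\]
which is an isomorphism by rank comparison: both sides are locally free of rank $p^{2n}$ over $\Ol_{X'}$, and any nonzero morphism of Azumaya algebras of the same rank is an isomorphism. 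Tensoring this splitting over $\Ol_{X'}$ with $\Sym\T_{X'}$ and then pulling back along $F_{X/k}$ produces the canonical splitting of $\gr_N\D_X$ with the stated graded splitting module $F_{X/k}^\ast F_{X/k\ast}\Ol_X\otimes_{\Ol_X}F_{X/k}^\ast\Sym\T_{X'}$.

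The main technical point is keeping track of the various Frobenius push/pullback operations when translating between the splittings over $\Ol_{X'}$, over $\Sym\T_{X'}$, and over $F_{X/k}^\ast\Sym\T_{X'}$. Conceptually the argument amounts to the observation that restricting the Azumaya algebra $F_{X/k\ast}\D_X$ along the augmentation $\Sym\T_{X'}\twoheadrightarrow\Ol_{X'}$ yields an Azumaya algebra on $X'$ canonically split by $F_{X/k\ast}\Ol_X$, and the full $\Sym\T_{X'}$-grading then supplies the grading of the tensor decomposition of $\gr_N\D_X$.
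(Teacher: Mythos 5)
Your proof is correct, and it follows the same overall architecture as the paper's argument, but it substitutes a self-contained derivation for the paper's citation. The paper identifies $\D_X/\J_X$ with a matrix algebra by directly invoking Berthelot's Proposition 2.2.7, which says $\D_X/\J_X \simeq \E nd_{F_{X/k}^{-1}\Ol_{X'}}(\Ol_X)$, and then tensors with $F_{X/k}^\ast\Sym\T_{X'}$. You instead deduce the Azumaya property of $F_{X/k\ast}(\D_X/\J_X)$ by base-changing the BMR isomorphism $F_{X/k}^\ast F_{X/k\ast}\D_X\simeq \E nd_{F_{X/k}^\ast\Sym\T_{X'}}(\D_X)$ along the augmentation $F_{X/k}^\ast\Sym\T_{X'}\twoheadrightarrow\Ol_X$, and then identify the splitting module by feeding the canonical $\D_X$-action on $\Ol_X$ (which factors through $\D_X/\J_X$ since $(\Ol_X,d)$ has vanishing $p$-curvature) into a rank comparison between Azumaya algebras. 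This is a genuine reproof of the Berthelot ingredient from the BMR theorem, and it buys you a more transparent view of where the $F_{X/k\ast}\Ol_X$-splitting comes from, at the modest cost of having to justify the ``nonzero map of Azumaya algebras of equal rank is an isomorphism'' step (which does hold: it preserves $1$, hence is nonzero on each fiber, hence an isomorphism fiberwise, hence surjective by Nakayama and an isomorphism by a determinant or rank argument). Your explicit PBW description of $\D_X$ as a free module over $\Ol_X$ on the monomials $\partial^\gamma(\partial^p-\partial^{(p)})^\beta$ with $0\le\gamma_i<p$ also makes the graded tensor decomposition and the multiplicativity of the filtration $N^\bullet$ concrete in a way the paper leaves implicit by citing Ogus--Vologodsky.

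One small point of bookkeeping to watch: after you tensor the splitting $F_{X/k\ast}(\D_X/\J_X)\simeq\E nd_{\Ol_{X'}}(F_{X/k\ast}\Ol_X)$ with $\Sym\T_{X'}$ and apply $F_{X/k}^\ast$, the object on the left is $F_{X/k}^\ast F_{X/k\ast}(\gr_N\D_X)$, not $\gr_N\D_X$ itself. This is exactly the content of the ``tensor Azumaya'' formalism of Ogus--Vologodsky (subsection 3.4) that the proposition is phrased in: the split Azumaya structure is that of $F_{X/k\ast}(\gr_N\D_X)$ over $\Sym\T_{X'}$, and the paper records the splitting module after pulling it back to $X$, which is why it appears as $F_{X/k}^\ast F_{X/k\ast}\Ol_X\otimes_{\Ol_X}F_{X/k}^\ast\Sym\T_{X'}$. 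Your proof supplies the right content; just be explicit that the ``split'' assertion lives over $\Sym\T_{X'}$ (equivalently, fppf-locally over $F_{X/k}^\ast\Sym\T_{X'}$) rather than claiming $\gr_N\D_X$ itself is a matrix algebra over $F_{X/k}^\ast\Sym\T_{X'}$, which a rank count rules out.
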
 
\begin{proof}
This follows from \cite{OV} the paragraph below Lemma 3.18. In fact, by \cite{Bert} Proposition 2.2.7, we have \[\D_X/\J_X\simeq \E nd_{F_{X/k}^{-1}\Ol_{X'}}(\Ol_X)\simeq \E nd_{\Ol_X}(F_{X/k}^\ast F_{X/k\ast}\Ol_X).\]
Therefore, we have an isomorphism 
\[(\D_X/\J_X)\otimes_{\Ol_{X}}F_{X/k}^\ast\Sym \T_{X'}\simeq \E nd_{F_{X/k}^\ast\Sym \T_{X'}}\Big(F_{X/k}^\ast F_{X/k \ast}\Ol_X\otimes_{\Ol_{X}}F_{X/k}^\ast\Sym\T_{X'}\Big). \]
\end{proof}
Hence we get an equivalence of categories
\[\begin{split}
C_{X}:\Qcoh(\gr_N\D_X)&\st{\sim}{\lra}\Qcoh(F_{X/k}^\ast\Sym \T_{X'}), \\ \M &\mapsto \H om_{\gr_N\D_X}\Big(F_{X/k}^\ast F_{X/k\ast}\Ol_X\otimes_{\Ol_{X}}F_{X/k}^\ast\Sym \T_{X'}, \M\Big).
\end{split} \] The quasi-inverse $C_X^{-1}: \Qcoh(F_{X/k}^\ast\Sym \T_{X'}) \lra \Qcoh(\gr_N\D_X)$ is given by \[\E\mapsto \E\otimes_{F_{X/k}^\ast\Sym \T_{X'}}\Big(F_{X/k}^\ast F_{X/k\ast}\Ol_X\otimes_{\Ol_{X}}F_{X/k}^\ast\Sym\T_{X'}\Big)\simeq \E\otimes_{\Ol_X}F_{X/k}^\ast F_{X/k\ast}\Ol_X.\]
In fact, we have an equivalence of derived categories
\[C_X: D\Big(Mod^\bullet(\gr_N\D_X)\Big)\st{\sim}{\lra}D\Big(Mod^\bullet(F_{X/k}^\ast\Sym \T_{X'})\Big),\]
where $D\Big(Mod^\bullet(\gr_N\D_X)\Big)$ is the derived category of graded 
$\gr_N\D_X$-modules, and similarly for $D\Big(Mod^\bullet(F_{X/k}^\ast\Sym \T_{X'})\Big)$.

\subsection{Higgs bundles and flat vector bundles}

Let $Y/k$ be a smooth scheme.
A Higgs sheaf on $Y$ is defined by a pair $(\H, \theta)$, where $\H$ is a quasi-coherent $\Ol_{Y}$-module, \[\theta: \H\ra \H\otimes_{\Ol_Y}\Omega^1_{Y}\] is a morphism of $\Ol_Y$-modules, satisfying $\theta \wedge \theta =0$. As in the case of flat connections, a Higgs sheaf can be equivalently given by a map \[\theta: \T_Y\ra \mathcal{E}nd_{\Ol_Y}(\H),\] such that the image of $\theta$ is a commuting subsheaf of operators. We denote by $\Higgs(Y)$ the category of Higgs sheaves on $Y$.

Consider the sheaf of algebra $\wh{\Gamma}\Omega_{Y}^1$, the complete PD algebra associated to $\Omega_{Y}^1$, which can be viewed as the structure sheaf of the PD formal completion $T_{PD}^\ast Y$ of the tangent bundle $T^\ast Y\ra Y$ along the zero section. We have a perfect pairing \[ \Sym\T_Y\times\wh{\Gamma}\Omega_{Y}^1\lra \Ol_Y.\]
Similar to Theorem \ref{thm flat connection}, we have
\begin{theorem}\label{thm higgs}
Let $\H$ be a quasi-coherent $\Ol_Y$-module. Then the following structures on $\H$ are equivalent:
\begin{enumerate}
	\item a Higgs filed $\theta: \H\ra \H\otimes\Omega_Y^1$,
	\item a $\Sym\T_Y$-module structure  on $\H$, 
	\item a PD stratification of $\H$ over $\wh{\Gamma}\Omega_{Y}^1$.
\end{enumerate}
\end{theorem}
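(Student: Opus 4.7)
The plan is to mirror the proof of Theorem \ref{thm flat connection} (i.e.\ \cite{BO} Theorems 4.8 and 4.12), replacing the noncommutative pair $(\D_X,\wh{\P}_X)$ of mutually dual sheaves of algebras by its abelian counterpart $(\Sym\T_Y,\wh{\Gamma}\Omega^1_Y)$. In each case, the argument splits into the equivalence of (1) and (2) via a universal property, and the equivalence of (2) and (3) via Cartier-style duality.

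For (1) $\Leftrightarrow$ (2), I would use that a Higgs field is by definition an $\Ol_Y$-linear morphism $\theta:\T_Y\to\mathcal{E}nd_{\Ol_Y}(\H)$ together with the integrability condition $\theta\wedge\theta=0$. A short local computation in coordinates $x_1,\dots,x_n$ shows that $\theta\wedge\theta=0$ is equivalent to the commutativity $[\theta(\partial_i),\theta(\partial_j)]=0$ of the corresponding endomorphisms of $\H$. By the universal property of the symmetric algebra as the free commutative $\Ol_Y$-algebra on an $\Ol_Y$-module, such a $\theta$ then extends uniquely to an $\Ol_Y$-algebra map $\Sym\T_Y\to\mathcal{E}nd_{\Ol_Y}(\H)$, which is precisely a $\Sym\T_Y$-module structure on $\H$ extending the given $\Ol_Y$-action, and conversely.

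For (2) $\Leftrightarrow$ (3), I would exploit the perfect pairing $\Sym^n\T_Y\times\Gamma^n\Omega_Y^1\to\Ol_Y$ recalled just before the theorem, which locally identifies each $\Sym^n\T_Y$ with the $\Ol_Y$-linear dual of $\Gamma^n\Omega_Y^1$. Dualising the multiplication on $\Sym\T_Y$ produces a comultiplication on $\wh{\Gamma}\Omega_Y^1$ which plays the role of the coalgebra structure on $\wh{\P}_X$ used in \cite{BO}. Given a $\Sym\T_Y$-module structure, the action maps $\Sym^n\T_Y\otimes\H\to\H$ dualise to compatible morphisms $\H\to\H\otimes\Gamma^n\Omega_Y^1$, which then assemble into a system of $\Gamma^n\Omega_Y^1$-linear isomorphisms $\varepsilon_n:\Gamma^n\Omega_Y^1\otimes\H\stackrel{\sim}{\to}\H\otimes\Gamma^n\Omega_Y^1$; the unitality and associativity of the action correspond respectively to $\varepsilon_0=\mathrm{id}_{\H}$ and to the cocycle condition under the comultiplication. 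Reversing the dictionary gives the other direction.

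The step that requires the most care is the passage between associativity and the cocycle condition, because it involves tracing through the dual of the product $\Sym^m\T_Y\otimes\Sym^n\T_Y\to\Sym^{m+n}\T_Y$ and matching it against the PD-based comultiplication on $\wh{\Gamma}\Omega_Y^1$. However, since $\Sym\T_Y$ is commutative, this bookkeeping is strictly simpler than in the classical flat-connection case, so I do not expect a genuine obstacle: the whole proof is a commutative shadow of \cite{BO} sections 4.8--4.12.
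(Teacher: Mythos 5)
Your proposal is correct and takes the same route the paper indicates: the paper does not spell out a proof either, merely remarking that $(1)\Leftrightarrow(2)$ is classical and that $(2)\Leftrightarrow(3)$ can be proved by mimicking the argument of \cite{BO} Theorems 4.8 and 4.12 for $\D_X$ and $\wh{\P}_X$, replacing that dual pair by $(\Sym\T_Y,\wh{\Gamma}\Omega^1_Y)$ (and pointing to \cite{Oy} Theorem 1.2.10 for a variant). One small notational caution: in assembling the stratification you should dualise the truncated actions to get isomorphisms over the truncations $\wh{\Gamma}\Omega^1_Y/\Gamma_{>n}\Omega^1_Y$ (the analogues of $\P^n_X$), not over the individual graded pieces $\Gamma^n\Omega^1_Y$, but this is bookkeeping and does not affect the substance of the argument.
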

The equivalence $(1)\Leftrightarrow(2)$ is classical. We omit the proof of the equivalence $(2)\Leftrightarrow(3)$ (which can be proved similarly as Theorem \ref{thm flat connection}; see also \cite{Oy} Theorem 1.2.10 and its proof for a slightly variant version) as actually we will not use it in the following.
 Let $q: T^\ast Y\ra Y$ be  the cotangent bundle , which is the geometric realization of $\Omega_Y^1=\H om(\T_Y, \Ol_Y)$, and we have \[q_\ast\Ol_{T^\ast Y}\simeq \Sym\T_Y.\] As $q$ is affine, the functor $q_\ast$ induces an equivalence of categories:
\[\Qcoh(T^\ast Y)\simeq \Qcoh(\Sym\T_Y)=\Higgs(Y). \]

Replacing the above $\Omega_Y^1$ and $\T_Y$ by a general locally free sheaf of finite rank $\Omega$ and its dual $\T$, we have the notion of $\T$-Higgs fields and $\T$-Higgs bundles. Back to our fixed smooth scheme $X/k$ and let $Y=X$. 
In particular, considering $\Omega=F_{X/k}^\ast\Omega^1_{X'}$ and $\T=F_{X/k}^\ast\T_{X'}$, we denote the associated category of twisted Higgs bundles as $F\tr{-}\Higgs(X)$. Then we have the $p$-curvature functor
\[\MIC(X)\lra F\tr{-}\Higgs(X), \quad (\E,\nabla)\mapsto (\E,\psi_\nabla). \]
As in Theorems \ref{thm flat connection} and \ref{thm higgs}, the category $F\tr{-}\Higgs(X)$ can be described as
\begin{theorem}
	Let $\E$ be a quasi-coherent $\Ol_X$-module.  Then the following structures on $\E$ are equivalent: 
	\begin{enumerate}
		\item a $F_{X/k}^\ast\T_{X'}$-Higgs field $\psi: \E\ra \E\otimes  F_{X/k}^\ast\T_{X'}$,
		\item a $F_{X/k}^\ast\Sym\T_{X'}$-module structure on $\E$,
		\item a PD stratification over the complete PD algebra
		$F_{X/k}^\ast\wh{\Gamma}\Omega^1_{X'}$.
	\end{enumerate}
\end{theorem}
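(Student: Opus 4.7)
The plan is to reduce this theorem to the analogue of Theorem \ref{thm higgs} applied to the dual pair of locally free sheaves $(F_{X/k}^\ast\Omega^1_{X'},\,F_{X/k}^\ast\T_{X'})$ in place of $(\Omega^1_Y,\T_Y)$. The key preliminary observation is that since $X/k$ is smooth, the relative Frobenius $F_{X/k}$ is flat, so $F_{X/k}^\ast\Omega^1_{X'}$ is locally free of finite rank on $X$ with dual $F_{X/k}^\ast\T_{X'}$. Moreover, pullback along $F_{X/k}$ commutes with the formation of symmetric and completed divided power algebras, giving canonical identifications
\[
F_{X/k}^\ast\Sym\T_{X'}\simeq \Sym_{\Ol_X}\bigl(F_{X/k}^\ast\T_{X'}\bigr),\qquad F_{X/k}^\ast\wh{\Gamma}\Omega^1_{X'}\simeq \wh{\Gamma}_{\Ol_X}\bigl(F_{X/k}^\ast\Omega^1_{X'}\bigr),
\]
together with a perfect duality pairing between the two. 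Thus all three structures in the statement involve only the dual pair of locally free sheaves on $X$, with no further geometric input needed.

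For $(1)\Leftrightarrow(2)$, I would repeat the classical Higgs-to-module argument. The Higgs field $\psi$ is adjoint, via the duality pairing, to an $\Ol_X$-linear map $F_{X/k}^\ast\T_{X'}\to \mathcal{E}nd_{\Ol_X}(\E)$. The integrability condition $\psi\wedge\psi=0$ is equivalent to the image consisting of pairwise commuting endomorphisms, and by the universal property of the symmetric algebra such a map then extends uniquely to an $\Ol_X$-algebra homomorphism $F_{X/k}^\ast\Sym\T_{X'}\to \mathcal{E}nd_{\Ol_X}(\E)$, i.e. a $F_{X/k}^\ast\Sym\T_{X'}$-module structure on $\E$. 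This is exactly the same manipulation as in the $(1)\Leftrightarrow(2)$ part of Theorem \ref{thm higgs}, now carried out for the $F$-twisted pair.

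For $(2)\Leftrightarrow(3)$, I would mimic the formal recipe already used in Theorems \ref{thm flat connection} and \ref{thm higgs}, via the perfect pairing above. A PD stratification on $\E$ over $F_{X/k}^\ast\wh{\Gamma}\Omega^1_{X'}$ translates, by dualizing against $F_{X/k}^\ast\Sym\T_{X'}$, into a compatible family of $\Ol_X$-linear evaluation maps $F_{X/k}^\ast\Sym^n\T_{X'}\to \mathcal{E}nd_{\Ol_X}(\E)$, and the PD cocycle condition corresponds precisely to the associativity of the resulting symmetric-algebra action; the reduction modulo the augmentation ideal yields $\varepsilon_0=\mathrm{id}_{\E}$. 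The main obstacle is purely bookkeeping: verifying that the PD cocycle identities translate correctly into the algebra axioms. This is exactly the step the author omits in the proof of Theorem \ref{thm higgs}, and a detailed treatment in the untwisted case can be found in \cite{Oy}, Theorem 1.2.10. Once that abstract equivalence has been established for an arbitrary locally free sheaf of finite rank and its dual, the specialization to $\Omega=F_{X/k}^\ast\Omega^1_{X'}$ yields the theorem.
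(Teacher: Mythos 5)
Your proposal is correct and is essentially the paper's own route: the paper gives no explicit proof of this statement, merely pointing back to Theorems \ref{thm flat connection} and \ref{thm higgs} (i.e., setting $\Omega=F_{X/k}^\ast\Omega^1_{X'}$, $\T=F_{X/k}^\ast\T_{X'}$ in the general dual-pair framework) and deferring the $(2)\Leftrightarrow(3)$ bookkeeping to \cite{Oy} Theorem 1.2.10, which is exactly the reduction you carry out. One minor remark: the target of $\psi$ in item (1) as printed is a slip and should read $\E\otimes F_{X/k}^\ast\Omega^1_{X'}$ (matching the $p$-curvature map), as you tacitly assume; also, flatness of $F_{X/k}$ is true but not needed --- pullback of a locally free sheaf is locally free, and $f^\ast$ commutes with $\Sym$ and $\wh{\Gamma}$ on locally free modules, for any scheme morphism.
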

Thus we have an equivalence of categories
\[F\tr{-}\Higgs(X)\simeq \Qcoh(F_{X/k}^\ast\Sym\T_{X'}).\]  Together with the equivalence $\MIC(X)\simeq \Qcoh(\D_X)$ by Theorem \ref{thm flat connection}, the $p$-curvature functor 
$\MIC(X)\lra F\tr{-}\Higgs(X)$ is given by the natural restriction functor by the inclusion \[F_{X/k}^\ast\Sym\T_{X'}\simeq Z_{\D_X}(\Ol_X)\subset \D_X.\] 
On the other hand, as $\Higgs(X')\simeq \Qcoh(\Sym\T_{X'})$ by Theorem \ref{thm higgs}, we have the natural pullback functor
\[\Higgs(X')\lra F\tr{-}\Higgs(X)\]
induced by $F_{X/k}^\ast$. 

Recall $I$ is the ideal sheaf of the diagonal embedding $X\hookrightarrow X\times_kX$. We have a natural map \[I\ra \P_X/I\P_X\] which is $p$-linear and zero on $I^2$, thus we get an induced map \[ Fr_X^\ast\Omega_X^1=F_{X/k}^\ast \Omega_{X'}^1\lra  \P_{X}/I\P_{X}, \quad \tr{and thus}\quad F_{X/k}^\ast\wh{\Gamma}\Omega^1_{X'}\lra \wh{\P}_{X}/I\wh{\P}_{X}.\]
\begin{proposition}\label{prop p-curvature}
\begin{enumerate}
\item The map $F_{X/k}^\ast \Omega_{X'}^1\lra  \P_{X}/I\P_{X}$ induces an isomorphism \[F_{X/k}^\ast \Omega_{X'}^1\simeq J_X/(J_X^{[p+1]}+ I\P_X). \]
\item The above map is an isomorphism of sheaves of algebras
\[ F_{X/k}^\ast\wh{\Gamma}\Omega^1_{X'} \simeq \wh{\P}_{X}/I\wh{\P}_{X}. \]
\item Under the perfect pairing $\D_{X}\times \wh{\P}_{X}\ra \Ol_X$, the induced quotient map \[\wh{\P}_{X}\ra \wh{\P}_{X}/I\wh{\P}_{X}\simeq F_{X/k}^\ast\wh{\Gamma}\Omega^1_{X'}\] is dual to the inclusion
$F_{X/k}^\ast\Sym\T_{X'} \hookrightarrow \D_{X}$ given by the $p$-curvature map
\[F_{X/k}^\ast\Sym\T_{X'} \simeq Z_{\D_X}(\Ol_X)\hookrightarrow \D_X.\]
\end{enumerate}
\end{proposition}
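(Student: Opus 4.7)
The plan is to reduce everything to explicit local computation using \'etale coordinates $x_1,\dots,x_n$ on $X$, setting $\xi_i := 1\otimes x_i-x_i\otimes 1\in I$ so that $\wh{\P}_X$ has the local $\Ol_X$-basis $\xi^{[\beta]}:=\prod_i \xi_i^{[\beta_i]}$, $\beta\in\N^n$. The crucial local input, underlying all three parts, will be the claim that modulo $I\wh{\P}_X$ the class $[\xi^{[\beta]}]$ is nonzero if and only if every $\beta_i$ is divisible by $p$. For the harder direction, when $0<r<p$ one has $\xi_i^{[r]}=\xi_i^r/r!\in I$, and then $\xi_i^{[pm+r]}=\binom{pm+r}{r}^{-1}\xi_i^{[r]}\xi_i^{[pm]}\in I\wh{\P}_X$, with $\binom{pm+r}{r}\equiv 1\pmod p$ by Lucas's theorem. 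Consequently the classes $\{[\xi^{[p\alpha]}]\}_{\alpha\in\N^n}$ give an $\Ol_X$-basis of $\wh{\P}_X/I\wh{\P}_X$.

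For (1), I would first check that $f\mapsto f^{[p]}$ defines a $p$-linear map $I\to \wh{\P}_X/I\wh{\P}_X$ factoring through $I/I^2$. Additivity mod $I\wh{\P}_X$ follows because the cross terms of $(f+g)^{[p]}=\sum_{i+j=p} f^{[i]}g^{[j]}$ have $f^{[i]}\in I$ for $0<i<p$; factoring through $I^2$ uses the PD identity $(fg)^{[p]}=f^p g^{[p]}$ combined with $f^p=p!\,f^{[p]}=0$ in characteristic $p$. This yields the desired $\Ol_X$-linear map $F_{X/k}^\ast\Omega^1_{X'}\to \wh{\P}_X/I\wh{\P}_X$ sending $dx_i\otimes 1\mapsto[\xi_i^{[p]}]$. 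A short PD bookkeeping check (the $\xi^{[\beta]}$ of total PD-degree $\leq p$ not in $J_X^{[p+1]}+I\P_X$ are exactly the $\xi_i^{[p]}$) then identifies the image with $J_X/(J_X^{[p+1]}+I\P_X)$ and gives bijectivity.

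For (2), I extend multiplicatively by $\prod_i(dx_i\otimes 1)^{[\alpha_i]}\mapsto [\xi^{[p\alpha]}]$. Compatibility with multiplication reduces to the single identity $\binom{p(a+b)}{pa}\equiv\binom{a+b}{a}\pmod p$, a second application of Lucas, comparing $(dx_i\otimes 1)^{[a]}(dx_i\otimes 1)^{[b]}=\binom{a+b}{a}(dx_i\otimes 1)^{[a+b]}$ on the $\wh{\Gamma}$ side with $\xi_i^{[pa]}\xi_i^{[pb]}=\binom{p(a+b)}{pa}\xi_i^{[p(a+b)]}$ on the PD-envelope side; bijectivity on local bases is immediate from the first paragraph, and coordinate-independence follows from the intrinsic origin of the map in (1). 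For (3), I use the local duality $\langle\partial^\alpha,\xi^{[\beta]}\rangle=\delta_{\alpha\beta}$. Since $\partial_i^{(p)}=0$ locally, the $p$-curvature sends $\partial_i\otimes 1\mapsto\partial_i^p$, so the image of $F_{X/k}^\ast\Sym\T_{X'}$ in $\D_X$ is the $\Ol_X$-span of $\{\partial^{p\alpha}\}$, which by the pairing is precisely the annihilator of the sub-$\Ol_X$-module $I\wh{\P}_X\subset\wh{\P}_X$. The restricted perfect pairing $\partial^{p\alpha}\otimes[\xi^{[p\beta]}]\mapsto \delta_{\alpha\beta}$ then matches, via the identification of (2), the canonical pairing between $F_{X/k}^\ast\Sym\T_{X'}$ and $F_{X/k}^\ast\wh{\Gamma}\Omega^1_{X'}$.

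The main obstacle will not be conceptual but combinatorial: the careful PD bookkeeping of which $\xi^{[\beta]}$ sit in $J_X^{[p+1]}$, in $I\P_X$, or in neither, and the matching of normalizations so that the basis identifications in (2) and the pairing in (3) become compatible. Lucas's theorem is the essential mechanism that converts the characteristic-$p$ binomial identities into the clean structural statements the proposition demands; once those are in place the arguments are routine, but one has to organize them carefully because the quotient $\wh{\P}_X/I\wh{\P}_X$ does not inherit a PD structure from $\wh{\P}_X$ (since $I\wh{\P}_X$ is not a PD sub-ideal), so the PD structure on $F_{X/k}^\ast\wh{\Gamma}\Omega^1_{X'}$ must be transported along the constructed algebra isomorphism rather than derived from the quotient directly.
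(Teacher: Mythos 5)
Your proposal is correct in substance and matches what the references the paper points to (Ogus--Vologodsky Prop.\ 1.6, Gros--Le Stum--Quir\'os Prop.\ 3.3, Menzies Prop.\ 7.6 and Lemma 7.8) actually do: the paper itself gives no argument beyond those citations, so your local computation is precisely the content that is being outsourced. The identification of $I\wh{\P}_X$ with the span of $\xi^{[\gamma]}$ having some $\gamma_i\not\equiv 0\pmod p$ (the Lucas argument), the verification that $f\mapsto f^{[p]}$ is $p$-linear and kills $I^2$ via the PD identities $(f+g)^{[p]}=\sum f^{[i]}g^{[j]}$ and $(fg)^{[p]}=f^pg^{[p]}$ with $f^p=p!f^{[p]}=0$, and the duality $\langle\partial^\alpha,\xi^{[\beta]}\rangle=\delta_{\alpha\beta}$ with $\partial_i'\mapsto\partial_i^p$ are all the right moves.

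One step is not adequately justified as written. In part (2) you assert that coordinate-independence of the local map $\prod_i\gamma_{\alpha_i}(dx_i'\otimes 1)\mapsto[\xi^{[p\alpha]}]$ ``follows from the intrinsic origin of the map in (1).'' That does not follow: as you yourself note, $\wh{\P}_X/I\wh{\P}_X$ has no inherited PD structure, and an $\Ol_X$-algebra map out of $F_{X/k}^\ast\wh{\Gamma}\Omega^1_{X'}$ is \emph{not} determined by its restriction to degree one, since the divided powers $\gamma_m$ for $m\geq p$ do not lie in the subalgebra generated by degree-one elements. So two local coordinate charts could a priori produce two algebra isomorphisms agreeing on $F_{X/k}^\ast\Omega^1_{X'}$ but differing in higher PD degree. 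The cleanest fix is to build the map canonically by duality before checking it locally: the $p$-curvature inclusion $F_{X/k}^\ast\Sym\T_{X'}\hookrightarrow\D_X$ of part (3) is coordinate-free, its $\Ol_X$-linear dual under the perfect pairing $\D_X\times\wh{\P}_X\to\Ol_X$ gives a canonical surjection $\wh{\P}_X\twoheadrightarrow F_{X/k}^\ast\wh{\Gamma}\Omega^1_{X'}$, and your local computation then shows its kernel is exactly $I\wh{\P}_X$. This reverses the logical order (2)$\Rightarrow$(3) implicit in the paper and in your sketch, but it is the route that actually delivers the coordinate-independence you need; alternatively one can check directly that the quotient map in (3) is an algebra homomorphism onto the image and compare with the local formula, which again uses (3) before (2). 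Either way the Lucas computations you wrote remain the computational core; the organization around them needs the small adjustment.
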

\begin{proof}
(1) is \cite{OV} Proposition 1.6. (2) follows from (1) and a local computation, for more details, see \cite{GSQ} Proposition 3.3 or \cite{Men} Proposition 7.6. Finally, (3) follows easily from (2), cf. \cite{Men} Lemma 7.8.
\end{proof}

 If $(\E,\nabla)\in \MIC(X)$ with the associated stratification isomorphism $\varepsilon: pr_2^\ast\E\ra pr_1^\ast\E$ (by Theorem \ref{thm flat connection}), where $pr_i: \wh{P}_X=\Spec\,\wh{\P}_X\ra X$ are the natural projections, then under the isomorphism in Proposition \ref{prop p-curvature} (1), by a result of Mochizuki, the $p$-curvature $\psi_\nabla: \E\ra \E\otimes F_{X/k}^\ast\T_{X'}$ is given by \[\psi_\nabla(e)=\varepsilon(pr_2^\ast(e))- pr_1^\ast(e)\quad \tr{mod}\quad J_X^{[p+1]}+ I\P_X\] for any section $e$ of $\E$, see \cite{OV} Proposition 1.7. By Proposition \ref{prop p-curvature} (3), this is compatible with the description of $p$-curvature by restriction of scalar along the injection
 $F_{X/k}^\ast\Sym\T_{X'} \hookrightarrow \D_X$.

Back to the setting of the beginning of this subsection. Consider the smooth scheme $Y/k$.
Let $N\geq 1$ be an integer and $(\H,\theta)$ a Higgs sheaf on $Y$. We say $\theta$ is nilpotent of level $\leq N$ if the induced morphism \[\theta^N: \H\ra \H\otimes (\Omega_Y^1)^{\otimes N}\] is zero. The resulting category is denoted by $\Higgs_N(Y)$. 
Now, we review the basic link between flat connections and Higgs sheaves, following \cite{OV}. 
Back to our fixed smooth scheme $X/k$ as in the beginning of this section, and let $Y=X'$. One basic version of the main results of Ogus-Vologodsky is as follows.
\begin{theorem}[\cite{OV} Theorem 2.8]\label{thm OV}
	A lifting $\mathcal{X}/W_2(k)$ of $X/k$ gives rise to an equivalence of categories
	\[C^{-1}_{\mathcal{X}/W_2(k)}: \Higgs_{p-1}(X')\st{\sim}{\lra} \MIC_{p-1}(X), \]
	which extends the equivalence of Cartier descent in Theorem \ref{thm cartier descent}. 
\end{theorem}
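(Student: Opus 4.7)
The plan is to identify both categories with modules over the same algebra and construct the equivalence via a splitting of the Azumaya algebra $F_{X/k\ast}\D_X$ over a divided-power neighborhood of the zero section of $T^\ast X'$. By Theorem \ref{thm flat connection} and the affineness of $F_{X/k}$, $\MIC(X)\simeq \Qcoh(F_{X/k\ast}\D_X)$; by its Higgs analog, $\Higgs(X')\simeq \Qcoh(\Sym\T_{X'})$. Under Theorem \ref{thm center D}, $F_{X/k\ast}\D_X$ is Azumaya over $\Sym\T_{X'}$, and by Proposition \ref{prop p-curvature} the $p$-curvature of $(\E,\nabla)$ is precisely the restriction of the action to the center $\Sym\T_{X'}\hookrightarrow F_{X/k\ast}\D_X$. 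The nilpotence bounds $\psi^{p-1}_\nabla=0$ and $\theta^{p-1}=0$ cut out exactly the modules scheme-theoretically supported on the $(p-1)$-st divided-power neighborhood $Y_{p-1}$ of the zero section in $T^\ast X'=\Spec\,\Sym\T_{X'}$. It therefore suffices to produce a splitting of $F_{X/k\ast}\D_X|_{Y_{p-1}}$ canonically associated to $\mathcal{X}/W_2(k)$.

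The construction is local on $X$. The lift $\mathcal{X}/W_2(k)$ admits, Zariski-locally, a lift $\wt F:\mathcal{X}\to\mathcal{X}'$ of the relative Frobenius, the obstruction being unobstructed on a smooth local base. For any local section $x'$ of $\Ol_{\mathcal{X}'}$, the differential $\wt F^\ast(dx')$ is divisible by $p$ in $\Omega^1_{\mathcal{X}}$, so dividing by $p$ gives a canonical $\Ol_X$-linear map
\[\zeta_{\wt F}: F_{X/k}^\ast\Omega^1_{X'}\lra \Omega^1_X,\]
whose composition with $\Omega^1_X\ra \H^1(F_{X/k\ast}\Omega^\bullet_X)$ recovers the inverse Cartier isomorphism of Theorem \ref{thm cartier isom}. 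Two lifts $\wt F_1,\wt F_2$ differ by $\tfrac{1}{p}(\wt F_1^\ast-\wt F_2^\ast)$, a section of $F_{X/k}^\ast\T_{X'}$, giving the set of local Frobenius lifts the structure of an $F_{X/k}^\ast\T_{X'}$-torsor. Given $\zeta_{\wt F}$, one deforms the canonical graded splitting
\[\gr_N\D_X\simeq \E nd_{F_{X/k}^\ast\Sym\T_{X'}}\Bigl(F_{X/k}^\ast F_{X/k\ast}\Ol_X\otimes_{\Ol_X}F_{X/k}^\ast\Sym\T_{X'}\Bigr)\]
of subsection \ref{subsection conj fil} into a splitting of the ungraded Azumaya algebra $F_{X/k\ast}\D_X$ on $Y_{p-1}$, yielding a splitting module $\mathcal{K}_{\mathcal{X},\wt F}$ whose associated graded is the canonical one above. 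The Cartier transform is then
\[C^{-1}_{\mathcal{X}/W_2(k)}(\H,\theta)\;:=\;\mathcal{K}_{\mathcal{X}}\otimes_{\Ol_{Y_{p-1}}}\H,\]
which carries its canonical $F_{X/k\ast}\D_X$-action, and hence a flat connection whose $p$-curvature is identified with $\theta$.

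The main obstacle is the global gluing: one must show that the local splittings $\mathcal{K}_{\mathcal{X},\wt F}$ assemble into a well-defined global splitting module $\mathcal{K}_{\mathcal{X}}$, independent of auxiliary choices. Changing $\wt F$ by $h\in F_{X/k}^\ast\T_{X'}$ modifies $\mathcal{K}_{\mathcal{X},\wt F}$ by an exponential automorphism $\exp(h\cdot-)$ acting along the divided-power coordinates, which is precisely why the bound $p-1$ appears: only PD-degrees strictly below $p$ are well-behaved, equivalently $(p-1)!$ must remain invertible. Checking the cocycle condition for these exponential twists over triple overlaps is where the HPD structure on $\wh{\P}_X$ and the identification $F_{X/k}^\ast\wh{\Gamma}\Omega^1_{X'}\simeq\wh{\P}_X/I\wh{\P}_X$ of Proposition \ref{prop p-curvature} enter decisively, since the twist is built from exactly the pairing between $\D_X$ and $\wh{\P}_X$ restricted modulo $I$. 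Once $\mathcal{K}_{\mathcal{X}}$ is defined globally, the Azumaya property of $F_{X/k\ast}\D_X$ over $\Sym\T_{X'}$ automatically upgrades it into an equivalence $\Qcoh(\Ol_{Y_{p-1}})\simeq \Qcoh(F_{X/k\ast}\D_X|_{Y_{p-1}})$, which by the support identification above is the desired equivalence $\Higgs_{p-1}(X')\simeq \MIC_{p-1}(X)$. Compatibility with Cartier descent (Theorem \ref{thm cartier descent}) follows by restricting the construction to the zero section $X'\hookrightarrow Y_{p-1}$: there $\zeta_{\wt F}$ plays no role and $\mathcal{K}_{\mathcal{X}}$ reduces to the standard $F_{X/k\ast}\Ol_X$-module structure underlying Cartier descent.
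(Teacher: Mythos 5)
Your proposal reconstructs the Ogus--Vologodsky proof (Azumaya splitting of $F_{X/k\ast}\D_X$ over a PD neighborhood of the zero section of $T^\ast X'$, built from local Frobenius lifts of $\mathcal{X}/W_2(k)$ and glued by exponential twists), which is exactly the strategy the paper summarizes when citing \cite{OV} Theorem 2.8 without reproving it. The one slightly misattributed point is where the $p-1$ bound enters: the exponentials $\exp(h\cdot-)$ are well-defined in the PD algebra $\wh{\Gamma}\T_{X'}$ at all levels, and the bound instead comes from the fact that $\Higgs_{p-1}(X')$, defined via $\Sym\T_{X'}$, only coincides with the category of modules over the PD neighborhood that Ogus--Vologodsky actually split over in degrees $\leq p-1$ where $\Sym$ and divided-power algebras agree -- you have the right invertibility of $(p-1)!$ in mind, but it governs that identification rather than the gluing itself.
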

Very roughly, this theorem was proved in \cite{OV} by showing the Azumaya algebra $F_{X/k\ast}\D_X$ splits when pulling back along \[\gamma: T_{PD}^\ast{X'}\ra T^\ast X',\] where we view $F_{X/k\ast}\D_X$ as a sheaf over $T^\ast X'$ by viewing it as a module over $Z(F_{X/k\ast}\D_X)\simeq \Sym\T_{X'}$, and \[T_{PD}^\ast{X'}=\Spec_{\Sym\T_{X'}}\wh{\Gamma}\T_{X'}\] with the complete PD algebra $\wh{\Gamma}\T_{X'}$ over $\Sym\T_{X'}$.
We refer to \cite{OV} for more comprehensive and extensive study of the correspondence between Higgs sheaves and flat connections, including the relative version, the comparison between cohomology and the functoriality.
\begin{remark}
	\begin{enumerate}
		\item There are some other constructions of the equivalence above: \begin{itemize}
			\item Lan-Sheng-Zuo \cite{LSZ1} constructed the Cartier and inverse Cartier transforms by exponential twisting of the classical Cartier descent equivalences as in Theorem \ref{thm cartier descent},
			\item Oyama \cite{Oy} constructed the equivalences by introducing a Higgs site which is closely related to the crystalline site; moreover this approach also deduced the comparison theorem between de Rham cohomology and Higgs cohomology.
		\end{itemize}
		\item The work of Schepler \cite{Schep} extends the Ogus-Vologodsky equivalence to the logarithmic setting, which also contains a comparison theorem between de Rham cohomology and Higgs cohomology.
	\end{enumerate}
\end{remark}

\subsection{$p$-torsion Fontaine modules}
Besides flat connections and Higgs fields, we will need more structures: Hodge filtration and Frobenius. One version of such objects is given by the Fontaine modules (some other terminologies: relative Fontaine-Laffaille modules, or filtered $F$-crystals) introduced by Faltings in \cite{Fal}. In fact, we will only need a $p$-torsion version, as discussed in loc. cit. subsections II. c) and d). We will follow \cite{LSZ2} section 2 to give explicit descriptions.

Fix a lifting $\mathcal{X}_2/W_2$ of $X/k$. A $p$-torsion Fontaine module on $X$ consists of tuples $(M, \nabla, \Fil, \Phi)$, where
\begin{itemize}
	\item $(M, \nabla)$ is a flat coherent $\Ol_X$-module,
	\item $\Fil$ is a decreasing filtration on $M$ of length\footnote{One should be careful about the length. We follow the notation of \cite{Fal}, where there is also a larger category $\MF^\nabla_{[0,p-1]}(\X_2/W_2)$ of similar objects, but with length $\leq p$, i.e. $\Fil^p=0$. However, it is the category  $\MF^\nabla_{[0,p-2]}(\X_2/W_2)$ that is compatible with the theory in \cite{OV}. } $\leq p-1$, i.e. $\Fil$ is given by  \[M=\Fil^0\supset \Fil^1\supset\cdots\supset\Fil^{p-2}\supset\Fil^{p-1}=0,\] such that the Griffiths transversality condition holds: $\nabla(\Fil^i)\subset \Fil^{i-1}\otimes\Omega^1_{X/k}$,
	\item $\Phi$ is the Frobenius structure on $M$ given by the following data. Set $\wt{M}=\gr_{\Fil}M=\bigoplus_{i=0}^{p-2}\Fil^i/\Fil^{i+1}$. Then \[\Phi: Fr_X^\ast\wt{M}\st{\sim}{\lra} M\] is an isomorphism of $\Ol_X$-modules. Moreover, $\Phi$ is required to be compatible with $\Fil$  and $\nabla$ in the following sense: take an open cover $U$ of $X$, $\U\subset \X_2$ lifts of $U$, $Fr_\U: \U\ra \U$ lifts of the absolute Frobenius on $U$,  and
	\begin{itemize}
		\item let $\wt{\nabla}=\gr_\Fil\nabla$ be the graded Higgs field attached to $\nabla$ and $\Fil$ on $\wt{M}$;
		\item for each $U$, let $Fr_U^\ast\wt{\nabla}$ be the connection on $Fr_U^\ast\wt{M}_{U}$ defined by the formula:
		\[Fr_U^\ast\wt{\nabla}(f\otimes e)=df\otimes e+ f\cdot (\frac{dFr_\U}{p}\otimes 1)(1\otimes \wt{\nabla}(e)), \quad f\in \Ol_U, e\in M_{U},\] then the following diagram commutes:
		\[\xymatrix{ Fr_U^\ast \wt{M}_U\ar[r]^{\Phi_U} \ar[d]_{Fr_U^\ast\wt{\nabla}} & M_U\ar[d]^\nabla\\
			Fr_U^\ast \wt{M}_U\otimes \Omega_U^1\ar[r]^{\Phi_U\otimes id} & M_U\otimes\Omega_U^1.
	} \]
	\end{itemize}
	By \cite{Fal}, $M$ is in fact locally free, and $\Phi$ is well defined, i.e. it does not depend on the choice of local liftings. 
\end{itemize}
We denote the category of $p$-torsion Fontaine modules on $X$ by \[\MF^\nabla_{[0,p-2]}(\X_2/W_2).\] By \cite{Fal}, this is an abelian category.
If $\X/W$ is a smooth lifting of $X/k$, we have also the usual category of Fontaine modules $\MF^\nabla_{[0,p-2]}(\X/W)$ over $\X$, cf. \cite{Fal} p. 34-35. There is an obvious truncation functor \[\MF^\nabla_{[0,p-2]}(\X/W)\ra \MF^\nabla_{[0,p-2]}(\X_2/W_2).\] Moreover, one can check that $\MF^\nabla_{[0,p-2]}(\X_2/W_2)$ is equivalent to the full subcategory of $p$-torsion objects in $\MF^\nabla_{[0,p-2]}(\X/W)$, see  \cite{OV} the paragraph after Definition 4.16.

Using Theorem \ref{thm OV}, we can give an equivalent formulation of $p$-torsion Fontaine modules. Recall  the equivalences
\[\xymatrix{\Higgs_{p-1}(X') \ar@/^/[rr]^{C^{-1}_{\X_2/W_2}}& & \MIC_{p-1}(X)  \ar@/^/[ll]^{C_{\X_2/W_2}}. } \]
For an object $(M, \nabla, \Fil, \Phi)\in \MF^\nabla_{[0,p-2]}(\X_2/W_2)$, we get the associated Higgs bundle
\[(\H, \theta):=\pi_\ast\circ C_{\X_2/W_2}(M,\nabla)\in \Higgs_{p-1}(X),\] where recall $\pi: X'\ra X$ is the projection from the Frobenius twist.
As in the characteristic zero case, this Higgs bundle can be obtained more directly as
\[(\H, \theta)\simeq (\gr_{\Fil}M, \gr_{\Fil}\nabla). \]
More precisely, we have \[\H=\bigoplus_i\H_i,\quad \theta=\bigoplus_i\theta_i\] with \[\H_i=\gr_{\Fil}^iM=\Fil^i/\Fil^{i-1}\quad \tr{and}\quad \theta_i=\gr_{\Fil}\nabla^i: \Fil^i/\Fil^{i-1}\lra (\Fil^{i-1}/\Fil^{i-2})\otimes\Omega_X^1\] induced by the Griffiths transversality condition.
\begin{proposition}\label{prop p-torsion FM}
\begin{enumerate}
\item We have an equivalence of categories:
\[ \MF^\nabla_{[0,p-2]}(\X_2/W_2)\st{\sim}{\lra}\{(M,\nabla,\Fil,\phi)\},\] where the category on the right hand side is given by \cite{OV} Definition 4.16. Concretely, for an object $(M,\nabla,\Fil,\phi)$ in this category, the first three terms $(M,\nabla,\Fil)$ are the same as above,
the last term is an isomorphism \[\phi: C_{\X_2/W_2}^{-1}\circ \pi^\ast (\gr_{\Fil}M, \gr_{\Fil}\nabla)\st{\sim}{\lra} (M,\nabla).\] The equivalence is given by a natural functor $(M, \nabla, \Fil, \Phi)\mapsto (M, \nabla, \Fil, \phi)$.
\item We have an equivalence of categories:
\[ \MF^\nabla_{[0,p-2]}(\X_2/W_2)\st{\sim}{\lra}\{(\H,\theta,\Fil,\phi)\},\] where the category on the right hand side is the category of 1-periodic Higgs-de Rham flow introduced in \cite{LSZ2}. It consists of objects with
$(\H,\theta)\in\Higgs_{p-1}(X)$,
 $\Fil$ is a Griffiths transverse filtration on $C^{-1}_{\X_2/W_2}(\H,\theta)$, and $\phi$ is an isomorphism \[\phi: \gr_{\Fil}\circ C_{\X_2/W_2}^{-1}\circ \pi^\ast (\H, \theta)\st{\sim}{\lra} (\H,\theta).\]
 The equivalence is given by a natural functor 
$(M, \nabla, \Fil, \Phi)\mapsto (\gr_{\Fil}M, \gr_{\Fil}\nabla, \Fil, \phi)$.
\end{enumerate}
\end{proposition}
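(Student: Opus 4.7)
The plan is to derive both equivalences from the Ogus--Vologodsky correspondence $C^{-1}_{\X_2/W_2}$ of Theorem \ref{thm OV}, the crucial point being to identify Faltings' local Frobenius $\Phi$ with the canonical isomorphism produced by $C^{-1}_{\X_2/W_2}$ applied to the associated graded.

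For part (1), start with $(M,\nabla,\Fil,\Phi)\in \MF^\nabla_{[0,p-2]}(\X_2/W_2)$. Because $\Fil$ has length $\leq p-1$ and is Griffiths transverse, the associated graded $(\wt{M},\wt{\nabla})=(\gr_{\Fil}M,\gr_{\Fil}\nabla)$ is a Higgs bundle on $X$ whose Higgs field is nilpotent of level $\leq p-1$; hence $\pi^\ast(\wt{M},\wt{\nabla})\in \Higgs_{p-1}(X')$ and $C^{-1}_{\X_2/W_2}\pi^\ast(\wt{M},\wt{\nabla})\in \MIC_{p-1}(X)$. The heart of the proof is to show that Faltings' compatibility condition relating $\Phi: Fr_X^\ast\wt{M}=F_{X/k}^\ast\pi^\ast\wt{M}\xrightarrow{\sim}M$ to the connections $\nabla$ and $\wt{\nabla}$ (stated locally via Frobenius lifts $Fr_{\U}$ on $\U\subset \X_2$) is equivalent to the assertion that $\Phi$ refines to an isomorphism of flat bundles $\phi: C^{-1}_{\X_2/W_2}\pi^\ast(\wt{M},\wt{\nabla})\xrightarrow{\sim}(M,\nabla)$. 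Granting this dictionary, the assignment $(M,\nabla,\Fil,\Phi)\mapsto (M,\nabla,\Fil,\phi)$ is manifestly functorial, and the same data on the target determines $\Phi$ uniquely by the inverse dictionary, whence the functor is an equivalence.

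For part (2), combine (1) with the tautological observation that the underlying flat bundle of an object $(M,\nabla,\Fil,\phi)$ as in (1) is recovered from $(\H,\theta):=(\gr_{\Fil}M,\gr_{\Fil}\nabla)$ as $(M,\nabla)=C^{-1}_{\X_2/W_2}\pi^\ast(\H,\theta)$. Thus the datum of such an $(M,\nabla,\Fil,\phi)$ is equivalent to the datum of $(\H,\theta)\in \Higgs_{p-1}(X)$, a Griffiths-transverse filtration $\Fil$ on $C^{-1}_{\X_2/W_2}\pi^\ast(\H,\theta)$, and an isomorphism $\phi:\gr_{\Fil}\circ C^{-1}_{\X_2/W_2}\circ \pi^\ast(\H,\theta)\xrightarrow{\sim}(\H,\theta)$, i.e.\ to a 1-periodic Higgs--de Rham flow in the sense of \cite{LSZ2}. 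Composing the two equivalences yields the functor $(M,\nabla,\Fil,\Phi)\mapsto (\gr_{\Fil}M,\gr_{\Fil}\nabla,\Fil,\phi)$ in the statement.

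The main technical obstacle is the identification used in part (1): one must match Faltings' local formula $Fr_U^\ast\wt{\nabla}(f\otimes e)=df\otimes e+f\cdot\bigl((dFr_{\U}/p)\otimes 1\bigr)(1\otimes \wt{\nabla}(e))$ against the local description of $C^{-1}_{\X_2/W_2}$ in terms of the splitting of the Azumaya algebra $F_{X/k\ast}\D_X$ over $T^\ast_{PD}X'$ attached to the lift $\X_2$ and a choice of local Frobenius lifts. This identification is essentially already carried out in \cite{OV} subsection 4.6; independence of the choice of local lift on both sides is a standard divided-power computation, using that the discrepancy between two lifts lies in $p\Omega^1_{\U}$ and that $\wt{\nabla}$ has $p$-curvature zero after the passage through $\pi^\ast$.
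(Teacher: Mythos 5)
Your proposal is correct and follows essentially the same route as the paper, whose ``proof'' consists of citing \cite{OV} subsection 4.6 for part (1) and \cite{LSZ2} Proposition 3.3 for part (2); you have simply unpacked what those citations contain. Two small remarks: first, in part (2) you should be slightly more careful that the equivalence requires transporting the filtration $\Fil$ on $M$ across the isomorphism $\phi$ to a filtration on $C^{-1}_{\X_2/W_2}\pi^\ast(\H,\theta)$ (it is an isomorphism, not an equality, so this transport is part of the construction, not a tautology); second, your deduction of part (2) from part (1) is what \cite{LSZ2} Proposition 3.3 does in detail, so this is not really a ``genuinely different route'' but rather the same argument, for which the paper defers to that reference rather than spelling it out.
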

\begin{proof}
The first statement is contained in \cite{OV} section 4.6.
The second equivalence is proved in details in \cite{LSZ2} Proposition 3.3.
\end{proof}

\subsection{$F$-zips}\label{subsection F-zip}

We recall the notion of $F$-zips introduced by Moonen-Wedhorn in \cite{MW} (see also \cite{Wed}). Let $S$ be a scheme over $\F_p$ (not necessarily smooth). For an object $M$ over $S$, in the following we denote $M^{(p)}=Fr_S^\ast M$, the pullback of $M$ under the absolute Frobenius morphism $Fr_S: S\ra S$.
An $F$-zip over $S$ is a tuple $(M, C^\bullet, D_\bullet, \varphi)$ where
	\begin{itemize}
		\item $M$ is a locally free sheaf of finite rank on $S$,
		\item $C^\bullet$ is a descending filtration on $M$,
		\item $D_\bullet$ is an ascending filtration on $M$,
		\item $\varphi=\oplus_i\varphi_i: (\gr_CM)^{(p)}\st{\sim}{\lra}\gr_DM$ is an isomorphism of graded $\Ol_S$-modules: for each $i$, $\varphi_i: (C^i /C^{i+1})^{(p)}  \to D_i / D_{i-1}$ is an isomorphism of $\Ol_S$-modules.
	\end{itemize}
For $s\in S$, consider the function \[\tau_s: \Z\ra \Z_{\geq 0}, \quad m\mapsto \dim_{\kappa(s)}\gr^m_C(M_s)=\dim_{\kappa(s)}C_s^m/C_s^{m+1}.\] As $\gr_C^mM$ are locally free, the function $s\mapsto \tau_s$ is locally constant on $S$, which takes values in the set of maps $\Z\ra \Z_{\geq 0}$ with finite support. We call this function the type of $(M, C^\bullet, D_\bullet, \varphi)$.
Let $F\tr{-Zip}(S)$ be the category of $F$-zips over $S$. 
Letting $S$ vary, we get a smooth algebraic stack $F\tr{-Zip}$, which decomposes as
\[ F\tr{-Zip}=\coprod_\tau F\tr{-Zip}^\tau,\]
where $\tau$ runs through the set of functions $\Z\ra \Z_{\geq 0}$ with finite support, and each $F\tr{-zip}^\tau$ is the open and closed substack classifying $F$-zips of type $\tau$, cf. \cite{MW} Proposition 2.2.

We have also the notion of $F$-zips with additional structure. Let $G/\F_p$ be a connected reductive group, $\mu$ a cocharacter of $G$ defined over a finite field $\kappa|\mathbb{F}_p$.
Let $P_+$ (resp. $P_-$) be the parabolic subgroup of $G_\kappa$ such that its Lie algebra is the sum of spaces with non-negative weights (resp. non-positive weights) in $\Lie(G_\kappa)$.
We will also write $U_+$ (reps. $U_-$) for the unipotent radical of $P_+$ (resp. $P_-$).
Let $L$ be the common Levi subgroup of $P_+$ and $P_-$.
Now assume that $S$ is a scheme over $\kappa$. A $G$-zip of type $\mu$ over $S$ is given by (cf. \cite{PWZ15} Theorem 7.13)
	\begin{enumerate}
		\item either a tuple $\underline{I} = (I,I_+,I_-,\varphi)$ consisting of
		\begin{itemize}
			\item a right $G$-torsor $I$ over $S$,
			\item a right $P_+$-torsor $I_+ \subset I$,
			\item a right $P_-^{(p)}$-torsor $I_- \subset I$,
			\item an isomorphism of $L^{(p)}$-torsors $\varphi: I_{+}^{(p)} / U_+^{(p)} \to I_{-}/ U_-^{(p)} $.
		\end{itemize}
		\item or an exact $\F_p$-linear tensor functor 
		$\mathfrak{z}: \Rep\,G\ra F\tr{-Zip}(S)$ of type $\mu$, which means that the graded fiber functor $\gr_C\circ \mathfrak{z}$ and $\gamma_\mu$ are fpqc-locally isomorphic. Here $\gr_C: F\tr{-Zip}(S)\ra \mathrm{GrVect}(S)$ is the functor taking grading with respect to the filtration $C^\bullet$ and
		$\gamma_\mu: \Rep\,G\ra \mathrm{GrVect}(\kappa)$ is the tensor functor defined by $\mu$.
	\end{enumerate}
One can check that the category of $F$-zips of rank $n$ is equivalent to the category of $\GL_n$-zips, see \cite{PWZ15}.

The category of $G$-zips of type $\mu$ over $S$ will be denoted by $G\tr{-Zip}^\mu(S)$.
This defines a category fibered in groupoids $G\tr{-Zip}^\mu$ over $\kappa$.
\begin{theorem}[\cite{PWZ15}]
	The fibered category $G\tr{-Zip}^\mu$  is a smooth algebraic stack of dimension 0 over $\kappa$, which has the form of a quotient stack $[E_{G,\mu} \backslash G_\kappa] $, where $E_{G,\mu} $ is certain algebraic group acting on $G_\kappa$.
\end{theorem}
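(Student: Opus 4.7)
The plan is to follow Pink--Wedhorn--Ziegler and realize $G\tr{-Zip}^\mu$ explicitly as a quotient stack, from which smoothness and the dimension computation will drop out.

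First I would construct the relevant group. Define the \emph{zip group}
\[ E_{G,\mu} := \{(p_+,p_-)\in P_+\times P_-^{(p)} \mid \ov{p_+}^{(p)}=\ov{p_-}\ \text{in } L^{(p)}\}, \]
where $\ov{\cdot}$ denotes the projection to the Levi quotient. This is an extension of $L$ (via $(p_+,p_-)\mapsto \ov{p_+}$) by $U_+\times U_-^{(p)}$, hence smooth and connected, of dimension $\dim U_+ + \dim U_- + \dim L = \dim G$. Let $E_{G,\mu}$ act on $G_\kappa$ by $(p_+,p_-)\cdot g = p_+\,g\,p_-^{-1}$.

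Next I would build the comparison functor $\Phi:[E_{G,\mu}\backslash G_\kappa]\to G\tr{-Zip}^\mu$. To a section $g\in G(S)$ one assigns the $G$-zip with trivial $G$-torsor $I=G_S$, with $P_+$-torsor $I_+:=P_{+,S}\subset G_S$, with $P_-^{(p)}$-torsor $I_-:=g\cdot P_{-,S}^{(p)}$, and with $\varphi$ the isomorphism $L_S^{(p)}=I_+^{(p)}/U_+^{(p)}\xrightarrow{\sim} I_-/U_-^{(p)}=g\cdot L_S^{(p)}$ given by left-multiplication by $\ov{g}$. A direct check using the definition of $E_{G,\mu}$ shows that $g$ and $(p_+,p_-)\cdot g$ produce canonically isomorphic $G$-zips, so $\Phi$ descends to the quotient stack.

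To show $\Phi$ is an equivalence I would verify essential surjectivity and full faithfulness separately. For essential surjectivity, given a $G$-zip $(I,I_+,I_-,\varphi)$ over $S$, the $P_+$-torsor $I_+$ is fppf-locally trivial, hence fppf-locally $I\simeq G_S$ with $I_+=P_{+,S}$; then $I_-$ is fppf-locally of the form $g\,P_{-,S}^{(p)}$ for some $g\in G(S)$, and after a further trivialization of the $L^{(p)}$-torsor we may normalize $\varphi$ to be left-multiplication by $\ov{g}$. This exhibits every $G$-zip as locally in the image of $\Phi$. For full faithfulness, the automorphisms of the standard $G$-zip associated to $g$ are computed directly to be exactly the stabilizer of $g$ under the $E_{G,\mu}$-action, so $\Phi$ induces an isomorphism on automorphism sheaves.

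Given the equivalence $G\tr{-Zip}^\mu \simeq [E_{G,\mu}\backslash G_\kappa]$, the remaining assertions are immediate: $G_\kappa$ is smooth and $E_{G,\mu}$ is smooth and acts algebraically, so the quotient stack is a smooth algebraic stack; and its dimension equals $\dim G_\kappa-\dim E_{G,\mu}=\dim G-\dim G=0$. The main technical point, and the step I would expect to require the most care, is the local triviality argument together with the normalization of $\varphi$ that identifies the fibers of $\Phi$ with $E_{G,\mu}$-orbits; everything else is formal once the zip group is set up correctly.
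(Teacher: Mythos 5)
The paper simply cites \cite{PWZ15} for this theorem and gives no internal proof, so there is nothing to compare against line by line; your proposal correctly reconstructs the standard Pink--Wedhorn--Ziegler argument (zip group, quotient presentation, dimension count), and it is the same argument that the cited source uses. One small notational slip worth fixing: describing $\varphi$ as ``left-multiplication by $\ov{g}$'' is not well-posed, since $g\in G(S)$ generally lies in neither $P_+$ nor $P_-^{(p)}$ and has no Levi projection; what is meant is that the trivialization $q\mapsto gq$ of $I_-$ induces a canonical identification $I_-/U_-^{(p)}\simeq L^{(p)}_S$, and with this and the tautological identification of $I_+^{(p)}/U_+^{(p)}$ one takes $\varphi$ to be the identity on $L^{(p)}_S$. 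With that correction the check that the construction descends to $[E_{G,\mu}\backslash G_\kappa]$ is exactly the verification that $\ov{p_+}^{(p)}=\ov{p_-}$ is the right compatibility, and the rest of your argument (local trivialization to establish essential surjectivity, normalization of $\varphi$ by replacing $g$ with $gp$ for a suitable $p\in P_-^{(p)}$, identification of automorphism groups with stabilizers for full faithfulness, and the resulting smoothness and the dimension count $\dim G_\kappa-\dim E_{G,\mu}=0$) is correct.
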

Let $B \subset G$ be a Borel subgroup and $T \subset B$ a maximal torus.
Let $W = W(B,T)$ be the absolute Weyl group, and $\Delta$ the set of simple roots defined by $B$.
Let $J \subset \Delta$ be the simple roots correspond to $P_+$.
Let $W_J$ be the subgroup of $W$ generated by $J$, and $^J W$ the set of elements $w$ such that $w$ is the element of minimal length in some coset $W_J w'$. By \cite{PWZ11} section 6, there is a partial order $\preceq$ on $^J W$, and we have a homeomorphism of topological spaces \[|[E_{G,\mu} \backslash G_\kappa]|\simeq (^J W, \preceq).\]

\section{De Rham $F$-gauges}

Let $X/k$ be a smooth scheme as in the last section.
We introduce and study the notion of de Rham $F$-gauges on $X$, inspired by \cite{MW, Kat72, OV} and \cite{Bhatt}. The notion of $F$-gauges was originally introduced by Fontaine-Jannsen in \cite{FJ}. Recently Drinfeld \cite{Dri20} and Bhatt-Lurie \cite{BL1, BL2} have extended this notion to prismatic cohomology for $p$-adic formal schemes in mixed characteristic. Here we restrict to characteristic $p$ and consider only de Rham cohomology.

\subsection{Explicit definition}\label{subsection explicit} Let $X$ be as above. Roughly a de Rham $F$-gauge over $X$ is an $F$-zip enriched by a flat connection.
Recall $Fr_X: X\ra X$ is the absolute Frobenius map.
\begin{definition}\label{def F-gauge}
\begin{enumerate}
\item
A de Rham $F$-gauge (in vector bundle) over $X$ is a tuple $\ul{\E}=(\E, \nabla, C^\bullet, D_\bullet, \varphi)$, where
\begin{itemize}
	\item $(\E, \nabla)$ is a flat vector bundle on $X$,
	\item $C^\bullet$ is a descending filtration on $\E$, which satisfies the Griffiths transversality condition with respective to $\nabla$: for any $i$, we have $\nabla(C^i)\subset C^{i-1}\otimes\Omega_X^1$.
	\item  $D_\bullet$ is an ascending filtration on $\E$, which is horizontal with respective to $\nabla$, such that the induced connection on $\gr_{D}\E$ has zero $p$-curvature,
	\item $\varphi$ is an $\Ol_X$-linear isomorphism
	\[\varphi: Fr_X^\ast(\gr_{C}\E, \theta)\st{\sim}{\lra} (\gr_{D}\E, \psi), \]
	where $\theta=\gr_{C}\nabla: \gr_C\E\ra \gr_C\E\otimes\Omega_X^1$ is the graded Higgs field as before, and \[\psi: \gr_{D}\E\ra \gr_{D}\E\otimes Fr_X^\ast\Omega_X^1\] is the $\Ol_X$-linear morphism induced by $\nabla$ and the zero $p$-curvature condition. 
\end{itemize}
\item A morphism of de Rham $F$-gauges $f: \ul{\E_1}\ra \ul{\E_2}$ is a morphism of flat vector bundles $f: (\E_1,\nabla_1)\ra (\E_2,\nabla_2)$, which is compatible with the filtrations $C_1^\bullet, C_2^\bullet$ and $D_{1,\bullet}, D_{2,\bullet}$, such that on graded vector bundles we have $\gr_D(f)\circ \varphi_1\simeq \varphi_2\circ Fr_X^\ast(\gr_C(f))$.
\end{enumerate}
We denote the category of de Rham $F$-gauges on $X$ by $F\tr{-Gauge}_{dR}(X)$.
\end{definition}
We want to emphasis that the graded Higgs field $\theta=\gr_{C}\nabla$ has the form 
\[\theta=\bigoplus_i\theta_i,\quad \tr{and}\quad \theta_i: \gr_C^i\E=C^i/C^{i+1}\ra \gr_C^{i-1}\E\otimes\Omega_X^1=(C^{i-1}/C^{i-2})\otimes\Omega_X^1.\]
It can be viewed as a section \[\theta\in H^0\Big(X, \bigoplus_i\H om(\gr_C^i\E, \gr_C^{i-1}\E)\otimes\Omega_X^1\Big)\subset H^0\Big(X, \E nd(\gr_C\E)\otimes\Omega_X^1\Big).\]
Similarly, the morphism $\psi$ induced by the $p$-curvature condition of the graded connection $\gr_{D}\nabla$ has the form
\[\psi=\bigoplus_i\psi_i,\quad \tr{and}\quad \psi_i: D_i/D_{i-1}\ra (D_{i-1}/D_{i-2})\otimes Fr_X^\ast\Omega_X^1,\]  which can be viewed as a section
\[\psi\in  H^0\Big(X, \bigoplus_i\H om(\gr_D^i\E, \gr_D^{i-1}\E)\otimes Fr_X^\ast\Omega_X^1\Big)\subset H^0\Big(X, \E nd(\gr_D\E)\otimes Fr_X^\ast\Omega_X^1\Big).\]
By projection formula and using $\gr_D\E\simeq Fr_X^\ast\gr_C\E$, we have
\[ \begin{split}
&H^0\Big(X, \bigoplus_i\H om(\gr_D^i\E, \gr_D^{i-1}\E)\otimes Fr_X^\ast\Omega_X^1\Big)\\=& H^0\Big(X, \bigoplus_iFr_{X\ast}\H om(\gr_D^i\E, \gr_D^{i-1}\E)\otimes \Omega_X^1\Big)\\ =& H^0\Big(X, \bigoplus_iFr_{X\ast}\H om(Fr_X^\ast\gr_C^i\E, Fr_X^\ast\gr_C^{i-1}\E)\otimes \Omega_X^1\Big).
\end{split}\] Applying basic properties of $p$-curvature (cf. \cite{Kat70} Proposition 5.2), we have in fact
\[ \begin{split}
\psi \in & H^0\Big(X, \big(\bigoplus_iFr_{X\ast}\H om(Fr_X^\ast\gr_C^i\E, Fr_X^\ast\gr_C^{i-1}\E)\otimes \Omega_X^1\big)^{\nabla^{can}}\Big)\\=& H^0\Big(X, \bigoplus_i\H om(\gr_C^i\E, \gr_C^{i-1}\E)\otimes\Omega_X^1\Big).
\end{split}\]
The isomorphism $\varphi$ in the last term of the above definition is required to preserve the grading structures on both sides. It is equivalent to the corresponding bijective $p$-linear morphism $\gr_C\E\ra\gr_D\E$, which makes the following digram commute:
\[\xymatrix{\gr_C\E\ar[rr]^-\theta\ar[d]& & \gr_C\E\otimes\Omega_X^1\ar[d]\\
	\gr_D\E\ar[rr]^-\psi& &\gr_D\E\otimes Fr^\ast_X \Omega_X^1,
	} \]where the right vertical map is the tensor product of the $p$-linear morphism $\gr_C\E\ra\gr_D\E$ with the natural $p$-linear  map $\Omega_X^1\ra Fr^\ast_X \Omega_X^1$.
We also remark that there is an obvious generalization of the above definition by considering quasi-coherent sheaves instead of  vector bundles.

\begin{example}\label{example F-gauge}
Let $f: Y\ra X$ be proper smooth scheme over $X$ of relative dimension $n$. Assume that the Hodge-de Rham spectral sequence \[E_1^{a,b}=R^bf_\ast(\Omega_{Y/X}^a)\Longrightarrow R^{a+b}f_\ast(\Omega_{Y/X}^\bullet)\] degenerates at $E_1$, and all the Hodge cohomology sheaves $R^bf_\ast(\Omega_{Y/X}^a)$ are locally free of finite rank over $X$. Then for each $0\leq i\leq 2n$, the relative de Rham cohomology \[H^i_{\dR}(Y/X)=R^if_\ast(\Omega_{Y/X}^\bullet)\] admits a canonical de Rham $F$-gauge structure: $\nabla$ is given by the Gauss-Manin connection, $C^\bullet$ is given by the Hodge filtration, $D_\bullet$ is given by the conjugate filtration (induced from the conjugate spectral sequence which also degenerates), and the isomorphism \[\varphi:  Fr_X^\ast(\gr_CH^i_{\dR}(Y/X),\theta)\st{\sim}{\lra}(\gr_DH^i_{\dR}(Y/X),\psi)\] is given by Katz in \cite{Kat72} Theorem 3.2. 

The above assumption on $Y\ra X$ holds in the following cases: abelian schemes, proper smooth curves, K3 surfaces, smooth complete intersection in a projective space $\mathbb{P}_{X}^n$ over $X$, etc. See \cite{MW} 7.4.
\end{example}

The usual operations on flat vector bundles extend naturally to de Rham $F$-gauges: we have direct sums, dual, tensor products etc, which make $F\tr{-Gauge}_{dR}(X)$ a tensor exact category. If considering quasi-coherent de Rham $F$-gauges, we would get a tensor abelian category. As an example, we discuss the tensor product $\ul{\E}=\ul{\E_1}\otimes\ul{\E_2}$ of two de Rham $F$-gauges $\ul{\E_1}=(\E_1,\nabla_1, C_1^\bullet, D_{1,\bullet}, \varphi_1)$ and $\ul{\E_2}=(\E_2,\nabla_2, C_2^\bullet, D_{2,\bullet}, \varphi_2)$:
\begin{itemize}
	\item $(\E, \nabla)=(\E_1,\nabla_1)\otimes (\E_2, \nabla_2)$ is the usual tensor product of two flat vector bundles (cf. \cite{Kat70} 1.1.1),
	\item $C^\bullet=C^\bullet_1\otimes C^\bullet_2$ is the usual tensor product of two decreasing filtrations on $\E_1\otimes \E_2$,
	\item $D_\bullet=D_{1,\bullet}\otimes D_{2,\bullet}$ is the tensor product of two increasing filtrations on $\E_1\otimes \E_2$,
	\item For each $a\in \Z$, $\varphi_a: Fr_X^\ast\gr_C^a(\E_1\otimes\E_2)\ra \gr_D^a(\E_1\otimes\E_2)$ is the isomorphism which makes the following diagram commute:
	\[\xymatrix{ Fr_X^\ast\gr_C^a(\E_1\otimes\E_2)\ar[d]^{\varphi_a}\ar[rr]^-\sim & & \bigoplus_iFr_X^\ast \gr_C^i\E_1\otimes Fr_X^\ast \gr_C^{a-i}\E_2\ar[d]^{\bigoplus_i(\varphi_{1,i}\otimes\varphi_{2,a-i})}\\
		\gr_D^a(\E_1\otimes\E_2)\ar[rr]^-\sim & &\bigoplus_i\gr_D^i\E_1\otimes\gr_D^{a-i}\E_2.
		} \]
		The induced morphisms $\theta_a=\bigoplus_i(\theta_{1,i}\otimes1+1\otimes\theta_{2,a-i})$ and $\psi_a=\bigoplus_i(\psi_{1,i}\otimes1+1\otimes\psi_{2,a-i})$ are matched under $\varphi_a$ by the compatibilities of corresponding components.
\end{itemize}

By definition, we have a natural forgetful functor
\[ F\tr{-Gauge}_{dR}(X)\lra F\tr{-Zip}(X),\]which is an equivalence of categories if $X=\Spec\,k$ with $k|\F_p$ is a field. In particular, we can define the notion of de Rham $F$-gauges of type $\tau$ associated to a locally constant functor $\tau$ as in subsection \ref{subsection F-zip}, and by definition we have the forgetful functor $F\tr{-Gauge}_{dR}^\tau(X)\lra F\tr{-Zip}^\tau(X)$.

On the other hand, we can construct de Rham $F$-gauges from $p$-torsion Fontaine modules (the construction is implicitly contained in \cite{OV} section 4.6).
\begin{proposition}\label{prop MF dR gauge}
	Let $\X_2/W_2$ be a lifting of $X/k$. Then
there is a natural functor
\[ \MF^\nabla_{[0,p-2]}(\X_2/W_2) \lra F\tr{-Gauge}_{dR}(X),\]
which is fully faithful, with essential image the de Rham $F$-gauges $\ul{\E}$ of nilpotent level $\leq p-1$, i.e. $\psi_\nabla^{p-1}=0$. 
\end{proposition}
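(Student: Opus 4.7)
The plan is to construct the functor explicitly on tuples, verify the de Rham $F$-gauge axioms using the reformulation of Fontaine modules in Proposition \ref{prop p-torsion FM}, and then read off full faithfulness and the essential image from the Ogus-Vologodsky equivalence of Theorem \ref{thm OV}. Given $(M,\nabla,\Fil,\Phi)\in \MF^\nabla_{[0,p-2]}(\X_2/W_2)$, I would take $\E:=M$, keep $\nabla$, and set $C^\bullet:=\Fil$. To produce the ascending filtration $D_\bullet$ and the isomorphism $\varphi$, I would use that the graded Higgs bundle $(\wt M,\gr_\Fil\nabla)=(\gr_\Fil M,\gr_\Fil\nabla)$ carries a canonical ascending filtration $F_\bullet$ coming from the grading (with $F_a=\bigoplus_{i\leq a}\gr^i_\Fil M$), preserved by the Higgs field since the latter has degree $-1$. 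Pulling back under $\pi$ and applying $C^{-1}_{\X_2/W_2}$, then transporting via the isomorphism $\phi$ of Proposition \ref{prop p-torsion FM}(1), gives a horizontal filtration $D_\bullet$ on $\E=M$. On graded pieces the Higgs field becomes zero, so classical Cartier descent (Theorem \ref{thm cartier descent}) identifies $\gr^D_a \E$ canonically with $F_{X/k}^\ast\pi^\ast\gr^a_\Fil M=Fr_X^\ast\gr_C^a\E$, and this identification defines $\varphi$. Equivalently, $D_a$ is the image under $\Phi$ of $Fr_X^\ast F_a\wt M$.

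Once the tuple is constructed, most axioms are immediate: Griffiths transversality of $C^\bullet$ is built into the Fontaine-module definition, $\gr^D\nabla$ has zero $p$-curvature by design, and the length bound on $\Fil$ forces $D_\bullet$ to have length $\leq p-1$, hence $\psi_\nabla^{p-1}=0$. The subtle point is the compatibility of $\varphi$ with the two induced maps $\theta=\gr_C\nabla$ and $\psi$ (the shift-by-$(-1)$ part of $\psi_\nabla$ on $\gr^D\E$). This is where the plan is most delicate: one must trace how $C^{-1}_{\X_2/W_2}$ sends the degree-$(-1)$ Higgs field on $\gr^F\H'$ to the degree-$(-1)$ piece of the $p$-curvature of $C^{-1}_{\X_2/W_2}(\H',\theta')$. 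I would establish this either by appealing to the filtered functoriality of the Cartier transform recorded in \cite{OV}, or more concretely by using the local formula for $\Phi$ recalled in the paragraph preceding Proposition \ref{prop p-torsion FM}: the intertwining of $\nabla$ with $Fr_U^\ast\gr_\Fil\nabla$ via $\Phi_U$ and $\tfrac{dFr_\U}{p}$ yields, modulo $D_{a-2}$, the relation $\psi_\nabla|_{D_a}\equiv Fr_X^\ast\theta_a$ under $\varphi_a$.

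Full faithfulness and the essential image would then follow by running Proposition \ref{prop p-torsion FM} in reverse. A morphism of de Rham $F$-gauges respects $\nabla$, $C^\bullet=\Fil$, $D_\bullet$, and $\varphi$; compatibility with $\varphi$ on graded pieces translates, via $\phi$, into compatibility with $\Phi$, so the morphism underlies a unique Fontaine morphism, and faithfulness is automatic since the underlying sheaf is preserved. For the essential image, given $\ul{\E}=(\E,\nabla,C^\bullet,D_\bullet,\varphi)$ with $\psi_\nabla^{p-1}=0$, Theorem \ref{thm OV} produces $(\H',\theta')\in \Higgs_{p-1}(X')$ with $(\E,\nabla)\simeq C^{-1}_{\X_2/W_2}(\H',\theta')$; the horizontal filtration $D_\bullet$ with zero $p$-curvature on graded pieces corresponds under $C_{\X_2/W_2}$ to a Higgs subbundle filtration on $(\H',\theta')$ whose graded pieces, via $\varphi$ and Theorem \ref{thm cartier descent}, are canonically $\pi^\ast\gr_C\E$. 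This identifies $(\H',\theta')\simeq \pi^\ast(\gr_C\E,\gr_C\nabla)$ and supplies the isomorphism $\phi$ needed by Proposition \ref{prop p-torsion FM} to recover a Fontaine module mapping to $\ul{\E}$. The main obstacle throughout is the $\varphi$-$\theta$-$\psi$ compatibility in the second paragraph, since this is the one step where the $F$-zip structure genuinely interacts with the Cartier transform rather than following formally from the definitions or from the existing equivalences.
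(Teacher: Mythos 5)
Your proposal matches the paper's proof in substance: the paper defines $D_i(M)=\Phi\bigl(Fr_X^\ast(\oplus_{j\leq i}\gr_C^jM)\bigr)$, which is exactly your ``equivalently, $D_a$ is the image under $\Phi$ of $Fr_X^\ast F_a\wt M$,'' obtains the $\varphi$--$\theta$--$\psi$ compatibility directly from the last condition in the definition of a $p$-torsion Fontaine module (your second proposed route), and invokes Proposition~\ref{prop p-torsion FM} for full faithfulness and the essential image, just as you do. The extra Cartier-transform scaffolding you put in front of the explicit formula, and the more detailed unwinding of Proposition~\ref{prop p-torsion FM} for the essential image, are elaborations rather than a different argument.
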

\begin{proof}
For an object $(M,\nabla, \Fil, \Phi)\in \MF^\nabla_{[0,p-2]}(\X_2/W_2)$, set $C^\bullet=\Fil$. We construct a conjugate filtration
$D_\bullet$ on $M$  as follows (see also \cite{OV} the paragraph above Theorem 4.17).
For each $i$, set
\[D_i(M)=\Phi(Fr_X^\ast(\bigoplus_{j\leq i}\gr_C^jM))\subset M.\]
Then $D_\bullet$ is an increasing filtration on $M$ and by construction
we have an isomorphism $\varphi_i: Fr_X^\ast\gr_{C}^iM\simeq \gr_{D}^iM$ for each $i$. By construction, $D_\bullet$ is horizontal with respect to $\nabla$ and the associated graded connection on $\gr_{D}M$ has zero $p$-curvature. 
By the last condition in the definition of the $p$-torsion Fontaine module $(M,\nabla, \Fil, \Phi)$, we have $\varphi: Fr_X^\ast\theta_\nabla \simeq \psi_\nabla$. Then $(M,\nabla, \Fil, \Phi)\mapsto (M,\nabla,\Fil, D_\bullet,\varphi)$ defines a functor $\MF^\nabla_{[0,p-2]}(\X_2/W_2) \lra F\tr{-Gauge}_{dR}(X)$.

By Proposition \ref{prop p-torsion FM},
this functor is fully faithful, with essential image the de Rham $F$-gauges $\ul{\E}$ of nilpotent level $\leq p-1$.
\end{proof}

\begin{remark}
Contrary to the notion of $p$-torsion Fontaine modules, it is important to note that the notion of de Rham $F$-gauges does not rely on any lifting of $X$.
\end{remark}

Let $n=\dim\,X$. 
For a de Rham $F$-gauge  $\ul{\E}=(\E, \nabla, C^\bullet, D_\bullet, \varphi)$ over $X$, we can form the following filtered complexes. First, as usual we can view the de Rham complex $DR(\E,\nabla)$ as a complex of $\D_X$-modules (see \cite{FC} Chapter VI section 3). Then we have:
\begin{itemize}
	\item The $C^\bullet$-filtered de Rham complex $DR(\E,\nabla)=(\E\otimes \Omega^\bullet_X, \nabla)$, with
	\[ C^a(DR(\E,\nabla))=C^{a-\bullet}(\E)\otimes\Omega_X^\bullet,\quad a\in\Z. \]
	This filtration is compatible with the order filtration $F_\bullet$ of $\D_X$.
	The associated graded complex is: for any $a\in\Z$,
	\[\gr_C^aDR(\E,\nabla)=\Big[0\ra \gr^a_C\E\ra \gr^{a-1}_C\E\otimes\Omega_X^1\ra\cdots \ra \gr^{a-n}_C\E\otimes\Omega_X^n\ra 0\Big].\]
	In particular, we get an object  \[\gr_CDR(\E,\nabla)\in D\Big(Mod^\bullet(\gr_F\D_X)\Big)\simeq D\Big(Mod^\bullet(\Sym\T_X)\Big).\]
	\item The $D_\bullet$-filtered de Rham complex $DR(\E,\nabla)=(\E\otimes \Omega^\bullet_X, \nabla)$, where the filtration is induced from the $D_\bullet$-filtration on $\E$ as follows. Since the induced $D_\bullet$-filtration on each term $\E\otimes\Omega_X^i$ is $\mathcal{J}_X$-compatible with the conjugate filtration $N^\bullet$ on $\D_X$ (cf. subsection \ref{subsection conj fil}), we can view $DR(\E,\nabla)$ together with the $D_\bullet$-filtration as an object in $DF(\D_X, \mathcal{J}_X)$ (cf. \cite{OV} Definition 3.13). The associated graded complex defines an object  \[\gr_{D} DR(\E,\nabla)\in D\Big(Mod^\bullet(\gr_N\D_X)\Big).\]
	\item The $D_\bullet$-filtered $p$-curvature complex $K(\E,\psi)=(\E\otimes Fr_X^\ast\Omega^\bullet_X,\psi)$, with
	\[
	 D_a(K(\E,\psi))=D_{a-\bullet}(\E)\otimes Fr_X^\ast\Omega_X^\bullet,\quad a\in\Z. \]
	 The associated graded complex is: for any $a\in\Z$,
	 	\[\gr_D^a(K(\E,\psi))=\Big[0\ra \gr^a_D\E\ra \gr^{a-1}_D\E\otimes Fr_X^\ast\Omega_X^1\ra\cdots \ra \gr^{a-n}_D\E\otimes Fr_X^\ast\Omega_X^n\ra 0\Big],\]
	 which	we can view as a graded complex of $Fr_X^\ast\Sym\T_X$-modules (cf. \cite{FC} Chapter VI section 3):
	 	it defines an object  \[\gr_DK(\E,\psi)\in D\Big(Mod^\bullet(Fr_X^\ast\Sym\T_X)\Big).\]
	 
\end{itemize}
Recall that we have an equivalence of categories (cf. subsection \ref{subsection conj fil}) \[C_X: D\Big(Mod^\bullet(\gr_N\D_X)\Big)\ra D\Big(Mod^\bullet(Fr_X^\ast\Sym\T_X)\Big).\]
\begin{proposition}\label{prop Frob gauge complexes}
We have a quasi-isomorphism for graded complexes 
\[\varphi: Fr_X^\ast\gr_{C}DR(\E,\nabla) \st{\sim}{\lra} C_X(\gr_{D} DR(\E,\nabla)), \]
which extends the isomorphism $\varphi$ in the definition of the de Rham $F$-gauge $\ul{\E}$.
\end{proposition}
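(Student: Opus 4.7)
The plan is to verify the statement by analyzing $C_X$ applied to $\gr_{D}DR(\E,\nabla)$ at both the term level and the differential level, and then using the $F$-gauge isomorphism $\varphi$ to match the result with $Fr_X^\ast\gr_{C}DR(\E,\nabla)$. First I would analyze the termwise structure. For each $a$ and each $i$, the $i$-th term of $\gr_{D}^a DR(\E,\nabla)$ is, up to the shift coming from the $\J_X$-compatibility of the filtration, of the form $\gr_{D}^{a-i}\E\otimes\Omega_X^i$, naturally a module over $\gr_N\D_X$. Since $D_\bullet$ is horizontal under $\nabla$ and the induced connection on $\gr_{D}\E$ has zero $p$-curvature, each $\gr_{D}^{a-i}\E$ is annihilated by $\J_X$, and thus carries a $\gr_N\D_X$-module structure in which the $Fr_X^\ast\Sym\T_X$-factor acts trivially through its augmentation. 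Applying the explicit form of $C_X$ coming from the canonical splitting $\gr_N\D_X \simeq (\D_X/\J_X)\otimes_{\Ol_X}Fr_X^\ast\Sym\T_X$, the $i$-th term of $C_X(\gr_{D}^a DR(\E,\nabla))$ is identified with $\gr_{D}^{a-i}\E\otimes Fr_X^\ast\Omega_X^i$, which is precisely the $i$-th term of $\gr_{D}^aK(\E,\psi)$.

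Second, I would check that the differential of $\gr_D^aDR(\E,\nabla)$ at position $i$ corresponds, under $C_X$, to the $p$-curvature differential $\psi^i$ of $\gr_{D}^aK(\E,\psi)$. This is the substantive point and uses Proposition \ref{prop p-curvature}(3) together with the Mochizuki-type crystalline characterization of $p$-curvature recalled just after it: $\psi_\nabla$ is recovered from the HPD stratification $\varepsilon$ associated to $\nabla$ by taking $\varepsilon(pr_2^\ast(e))-pr_1^\ast(e)$ modulo $J_X^{[p+1]}+I\P_X$, and this is exactly the datum that survives after passing to the $\J_X$-adic graded pieces of the de Rham differential. A local check in \'etale coordinates $x_1,\dots,x_n$, using $\partial_i^{(p)}=0$ and $\psi_\nabla(\partial_i)=\nabla(\partial_i)^p$, together with compatibility with wedge product, then shows that $C_X$ sends the Koszul-type differential of $\gr_{D}DR(\E,\nabla)$ to the $\psi$-differential of $\gr_{D}K(\E,\psi)$, yielding an identification of graded complexes $C_X(\gr_{D}DR(\E,\nabla)) \simeq \gr_{D}K(\E,\psi)$ in $D\bigl(Mod^\bullet(Fr_X^\ast\Sym\T_X)\bigr)$.

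Finally, the $F$-gauge isomorphism $\varphi: Fr_X^\ast(\gr_{C}\E,\theta)\st{\sim}{\lra}(\gr_{D}\E,\psi)$ tensored with the identity on $Fr_X^\ast\Omega_X^\bullet$ produces an isomorphism of graded complexes $Fr_X^\ast\gr_{C}DR(\E,\nabla)\simeq \gr_{D}K(\E,\psi)$, where the matching of the differentials $Fr_X^\ast\theta^i$ and $\psi^i$ is exactly the compatibility condition imposed in the last bullet of Definition \ref{def F-gauge}. Composing with the identification from the second step yields the desired quasi-isomorphism, and by construction it extends the original $\varphi$. The main technical obstacle is the differential-matching step: one must carefully track how the horizontal filtration $D_\bullet$ interacts with the de Rham differential under the $\J_X$-adic associated graded, and verify via the crystalline description of $p$-curvature that the resulting map coincides with $\psi$; once this is in hand, the rest of the argument is a formal consequence of the $F$-gauge axioms and the equivalence $C_X$.
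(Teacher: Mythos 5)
Your proposal is correct and follows essentially the same two-step strategy as the paper's proof --- the $F$-gauge isomorphism $\varphi$ matching $Fr_X^\ast\gr_C DR(\E,\nabla)$ with $\gr_D K(\E,\psi)$ on terms and first differentials, and the Azumaya splitting matching $\gr_D K(\E,\psi)$ with $C_X(\gr_D DR(\E,\nabla))$ --- except that you carry out the two steps in the opposite order and flesh out the paper's terse ``one checks by construction that $C_X^{-1}(\gr_DK(\E,\psi))\simeq \gr_{D} DR(\E,\nabla)$'' by invoking Proposition \ref{prop p-curvature}(3) and the Mochizuki/Ogus--Vologodsky crystalline description of $\psi_\nabla$, which is indeed the right substance behind that assertion. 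One small imprecision worth noting: the $Fr_X^\ast\Sym\T_X$-factor of $\gr_N\D_X$ does not act on each individual term $\gr_D^{a-i}\E\otimes\Omega_X^i$ merely through its augmentation --- it acts across the graded complex via the $p$-curvature, shifting the $D$-degree, and this is precisely why the crystalline description of $\psi_\nabla$ is what controls the differentials --- but this phrasing issue does not affect the soundness of your argument.
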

\begin{proof}
As $\ul{\E}$ satisfies the conditions in Definition \ref{def F-gauge}, by the above description, we see $\varphi$ induces an isomorphism between each term of the complexes $Fr_X^\ast\gr_{C}DR(\E,\nabla) $ and $\gr_{D} K(\E,\psi)$. To show it also induces an isomorphism between differentials, it suffices to see for any $a\in \Z$, the first differentials $\theta_a: \gr_C^a\E\ra \gr_C^{a-1}\E\otimes\Omega_X^1$ and $\psi_a: \gr_D^a\E\ra \gr_{D}^{a-1}\E\otimes Fr_X^\ast\Omega_X^1$ are matched. But this also follows from the definition of de Rham $F$-gauges.

On the other hand, we have the equivalence of categories
\[C_X^{-1}: D\Big(Mod^\bullet(Fr_X^\ast\Sym\T_X)\Big)\st{\sim}{\lra} D\Big(Mod^\bullet(\gr_N\D_X)\Big).\] One checks by construction that \[C_X^{-1}(\gr_DK(\E,\psi))\simeq \gr_{D} DR(\E,\nabla).\] Therefore, we get a quasi-isomorphism for graded complexes 
\[\varphi: Fr_X^\ast\gr_{C}DR(\E,\nabla) \st{\sim}{\lra} C_X(\gr_{D} DR(\E,\nabla)).\]
\end{proof}

The isomorphism in Proposition \ref{prop Frob gauge complexes} can be rewritten as
\[\varphi: \pi_X^\ast \gr_{C}DR(\E,\nabla) \st{\sim}{\lra} C_X(\gr_{D} F_{X/k\ast}DR(\E,\nabla)).\]
For the de Rham $F$-gauge $\ul{\E}=(\E, \nabla, C^\bullet, D_\bullet, \varphi)$ over $X$, from the $C^\bullet$-filtered complex $DR(\E,\nabla)$ we get the Hodge-de Rham spectral sequence
\['E_1^{a,b}=H^{a+b}\Big(X, \gr_C^aDR(\E,\nabla)\Big)\Longrightarrow H^{a+b}_{\dR}(X/k, (\E,\nabla)). \]
From the $D_\bullet$-filtered complex $F_{X/k\ast}DR(\E,\nabla)$, we get the conjugate spectral sequence
\[''E_1^{a,b}=H^{a+b}\Big(X', \gr_{D}^aF_{X/k\ast}DR(\E,\nabla)\Big)\Longrightarrow H^{a+b}_{\dR}(X/k, (\E,\nabla)).\]
The Cartier-twisted Frobenius \[\varphi: \pi_X^\ast\gr_{C}DR(\E,\nabla) \st{\sim}{\lra} C_X(\gr_{D}F_{X/k\ast}DR(\E,\nabla))\] induces an isomorphism
\[\varphi: Fr_k^\ast('E_1^{a,b})=Fr_k^\ast H^{a+b}\Big(X,\gr_C^aDR(\E,\nabla)\Big)\st{\sim}{\lra} H^{a+b}\Big(X', \gr_{D}^aF_{X/k\ast}DR(\E,\nabla)\Big)=''E_1^{a,b}.\]
Therefore, by the same arguments as in \cite{Kat70} Proposition 2.3.2 and \cite{MW} subsection 7.5, we get
\begin{proposition}\label{prop coh F-gauge}
Let $X$ be a proper and smooth variety over $k$. Assume that the Hodge-de Rham spectral sequence \['E_1^{a,b}=H^{a+b}\Big(X, \gr_C^aDR(\E,\nabla)\Big)\Longrightarrow H^{a+b}_{\dR}(X/k, (\E,\nabla)) \]
degenerates. 
Then the conjugate spectral sequence also degenerates. Moreover, for each $0\leq i\leq 2n=2\dim\,X$, we get a de Rham  $F$-gauge (= $F$-zip) on $k$ given by
\[H^i(X,\ul{\E}):= \Big(H^i_{\dR}(X/k, (\E,\nabla)), C^\bullet, D_\bullet, \varphi\Big),\] where $C^\bullet$ (resp. $D_\bullet$) is the induced Hodge filtration (resp. conjugate filtration) on $H^i_{\dR}(X/k,(\E,\nabla))$, and for each $0\leq a\leq i$, $\varphi_a$ is the isomorphism induced from the Frobenius on coefficients, which sits in the following commutative diagram
\[\xymatrix{ Fr_k^\ast H^{i}\Big(X, \gr_C^aDR(\E,\nabla)\Big)\ar[rr]^\sim \ar[d]^\sim & & H^{i}(X, \gr_{D}^aK(\E,\psi))\ar[d]^\sim \\
Fr_k^\ast\gr_C^aH^i_{\dR}(X/k,(\E,\nabla))\ar[rr]^{\varphi_a}& & \gr_D^aH^i_{\dR}(X/k,(\E,\nabla)).
}\]
\end{proposition}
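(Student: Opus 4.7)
The plan is to adapt the classical degeneration argument of Katz \cite{Kat70} Proposition 2.3.2 and Moonen-Wedhorn \cite{MW} \S 7.5 to the present setting with flat coefficients, using Proposition \ref{prop Frob gauge complexes} as the key input. First I would take hypercohomology on both sides of the quasi-isomorphism of that proposition; rewriting $\gr_D DR(\E,\nabla)$ in terms of the pushforward $F_{X/k\ast}DR(\E,\nabla)$ with its induced $D_\bullet$-filtration, and using that $F_{X/k}$ is affine (so $F_{X/k\ast}$ is exact and preserves hypercohomology), this yields Frobenius-twisted isomorphisms
\[ Fr_k^\ast H^{a+b}\Big(X, \gr_C^a DR(\E,\nabla)\Big) \st{\sim}{\lra} H^{a+b}\Big(X', \gr_D^a F_{X/k\ast}DR(\E,\nabla)\Big), \]
i.e.\ $Fr_k^\ast\,{'E_1^{a,b}} \simeq {''E_1^{a,b}}$. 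Since $k$ is perfect, this forces equal $k$-dimensions on the $E_1$-pages of the two spectral sequences.

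Next, applying the standard inequality $\dim_k H^i_{\dR}(X/k,(\E,\nabla)) \leq \sum_{a+b=i}\dim_k {''E_1^{a,b}}$, with equality if and only if the conjugate spectral sequence degenerates at $E_1$: by the Hodge-de Rham degeneration hypothesis and the dimension equality just obtained, the right-hand side equals $\sum_{a+b=i}\dim_k {'E_1^{a,b}} = \dim_k H^i_{\dR}(X/k,(\E,\nabla))$, so equality holds and the conjugate spectral sequence degenerates. Local freeness of each $\gr_D^a H^i_{\dR}$ is then inherited from local freeness of the $E_1$-terms, which in turn follows via the Frobenius isomorphism from the local freeness of the Hodge cohomology sheaves. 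Finally, taking $C^\bullet$ and $D_\bullet$ to be the induced Hodge and conjugate filtrations on $H^i_{\dR}(X/k,(\E,\nabla))$, degeneration furnishes canonical identifications $\gr_C^a H^i_{\dR} \simeq {'E_1^{a,i-a}}$ and $\gr_D^a H^i_{\dR} \simeq {''E_1^{a,i-a}}$; defining $\varphi_a$ by composing these with the $E_1$-level Frobenius isomorphism from the previous paragraph makes the displayed diagram commute tautologically, and produces the asserted $F$-zip (equivalently, de Rham $F$-gauge on $\Spec\,k$) structure on $H^i(X,\ul{\E})$.

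The main obstacle is the first step: one must check that the Cartier-type equivalence $C_X$ appearing in Proposition \ref{prop Frob gauge complexes}, once pushed forward along $F_{X/k}$, really identifies the graded conjugate-filtered pushforward $\gr_D F_{X/k\ast}DR(\E,\nabla)$ on $X'$ with the correct source of hypercohomology for the $''E_1$-page. This amounts to verifying that the construction of $C_X$ and the conjugate filtration on the pushforward are compatible on graded pieces, which ultimately relies on the splitting of the Azumaya algebra $\gr_N\D_X$ from Subsection \ref{subsection conj fil} behaving well under $F_{X/k\ast}$; once this compatibility is in place, the remaining dimension counts and spectral sequence bookkeeping are formal.
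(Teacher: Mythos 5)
Your proposal is correct and follows essentially the same route as the paper, which establishes the Frobenius-twisted $E_1$-level isomorphism $Fr_k^\ast({}'E_1^{a,b})\simeq{}''E_1^{a,b}$ from the rewritten form of Proposition \ref{prop Frob gauge complexes} and then invokes the dimension-count degeneration argument of \cite{Kat70} Proposition 2.3.2 and \cite{MW} \S7.5 exactly as you spell out. The only minor remark is that since the base is the field $k$, the ``local freeness'' of the $E_1$-terms and of $\gr^a_D H^i_{\dR}$ is automatic and need not be argued; and the compatibility of $C_X$ with $F_{X/k\ast}$ that you flag as the main obstacle is precisely what the paper's rewriting of the quasi-isomorphism after Proposition \ref{prop Frob gauge complexes} is asserting, so your diagnosis of where the technical weight sits agrees with the text.
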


One can obviously generalize the above proposition to the relative setting for a proper smooth morphism $Y\ra X$ (as in Example \ref{example F-gauge}) with coefficients in a de Rham $F$-gauge on $Y$.  We leave this task to the reader.
\begin{remark}\label{rmk log F-gauge}
Let $\ov{X}$ be a proper smooth scheme over $k$ and $X\subset \ov{X}$ an open subscheme such that $D=\ov{X}\setminus X$ is a normal crossing divisor of $\ov{X}$. One has the classical notion of flat connections over $\ov{X}$ with log poles at $D$. In fact, we have also natural extensions of all the discussions in this subsection to this logarithmic setting. See \cite{MW} subsection 7.7 for some discussions of the extension of $F$-zips to log $F$-zips.
\end{remark}

\subsection{De Rham $F$-gauges via syntomification}

	Let $X$ be a $p$-adic formal scheme over $\Z_p$. By Drinfeld \cite{Dri18, Dri20} and Bhatt-Lurie \cite{BL1, BL2, Bhatt},
	attached to $X$, one has 3 stacks (with increased complicities): $X^{\Prism}, X^{\mathcal{N}}$ and $X^{\tr{syn}}$. The (stable $\infty$-) category of prismatic $F$-gauges is defined by
	\[\mathcal{D}_{qc}(X^{\tr{syn}}),\]which contains prismatic $F$-gauges  in perfect complexes and vector bundles respectively
	\[\tr{Perf}(X^{\tr{syn}}), \quad \tr{Vect}(X^{\tr{syn}}).\]
	Specializing to our smooth scheme $X/k$ as above, the stacks $X^{\Prism}$ and $X^{\mathcal{N}}$ are stacks over $W=W(k)$, while $X^{\tr{syn}}$ is an stack over $\Z_p$, obtained by gluing the two copies of $X^{\Prism}$ inside  $X^{\mathcal{N}}$. In particular, we can consider the special fiber \[\ov{X^{\tr{syn}}}:=X^{\tr{syn}}\otimes \F_p\] of $X^{\tr{syn}}$. The constructions are functorial, so that we get a morphism of stacks $\ov{X^{\tr{syn}}}\ra \ov{k^{\tr{syn}}}$. In \cite{Bhatt} chapter 2, this is denoted by $X^{\ov{C}}\ra\ov{C}$. We will consider the category $\Vect(\ov{X^{\tr{syn}}})$ of vector bundles on $\ov{X^{\tr{syn}}}$. Note that it can be viewed as a full subcategory $\Vect(\ov{X^{\tr{syn}}}) \hookrightarrow \tr{Perf}(X^{\tr{syn}})$.
	The following theorem is implicitly contained in \cite{Bhatt}.
	\begin{theorem}\label{thm F-gauge via stack}
		Suppose that $X/k$ is a smooth algebraic variety over a perfect field $k$ of characteristic $p$. We have an equivalence of categories
	\[F\tr{-Gauge}_{dR}(X)\simeq \Vect(\ov{X^{\tr{syn}}}).\]
\end{theorem}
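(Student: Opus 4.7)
The plan is to unravel the stacky definition via the pushout/coequalizer description of the syntomification. Recall from Bhatt-Lurie that $\ov{X^{\tr{syn}}}$ is obtained by gluing two copies of $\ov{X^{\Prism}}$ inside $\ov{X^{\mathcal{N}}}$ along two natural maps $j_1, j_2 \colon \ov{X^{\Prism}} \rightrightarrows \ov{X^{\mathcal{N}}}$: the ``de Rham'' incarnation $j_1$ (at the generic fiber of the Rees direction of $\ov{X^{\mathcal{N}}}$), and the ``Hodge-Tate'' incarnation $j_2$ (at the special fiber), composed with the prismatic Frobenius. Pulling back to vector bundles exhibits $\Vect(\ov{X^{\tr{syn}}})$ as the equalizer of $j_1^\ast, j_2^\ast \colon \Vect(\ov{X^{\mathcal{N}}}) \rightrightarrows \Vect(\ov{X^{\Prism}})$: an object consists of $\mathcal{V} \in \Vect(\ov{X^{\mathcal{N}}})$ together with an isomorphism $\alpha \colon j_1^\ast \mathcal{V} \simeq j_2^\ast \mathcal{V}$ in $\Vect(\ov{X^{\Prism}})$. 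The proof thus breaks into two steps: identifying $\Vect(\ov{X^{\mathcal{N}}})$ intrinsically, and reading off the extra data carried by $\alpha$.

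Step 1 is to identify $\Vect(\ov{X^{\mathcal{N}}})$ with the category of triples $(\E, \nabla, C^\bullet)$, where $(\E,\nabla)$ is a flat vector bundle on $X$ and $C^\bullet$ is a descending filtration satisfying the Griffiths transversality condition. By the Bhatt-Lurie structure theorem, $\ov{X^{\mathcal{N}}}$ fibers over $\A^1/\G_m$ in its Rees direction, so vector bundles on it correspond to filtered vector bundles on $\ov{X^{\Prism}}$. Combining this with the identification (in characteristic $p$) of $\Vect(\ov{X^{\Prism}})$ with quasi-nilpotent flat connections on $X$, which is essentially the content of Theorem~\ref{thm flat connection}, produces the required data $(\E,\nabla,C^\bullet)$. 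The specialization at $0 \in \A^1/\G_m$ recovers the associated graded as a Higgs object $(\gr_C\E, \theta = \gr_C\nabla)$ on $X'$.

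Step 2 is to extract the extra structure contributed by $\alpha$. The map $j_1^\ast$ forgets the filtration and returns $(\E,\nabla) \in \MIC(X)$, while $j_2^\ast$ returns the image of $(\gr_C\E,\theta)$ under the $F_{X/k}^\ast$-pullback together with the Cartier descent functor of Theorem~\ref{thm cartier descent}, which embeds $\Qcoh(X')$ into $\MIC(X)$ as the subcategory of connections with vanishing $p$-curvature while remembering the grading as an increasing filtration. An isomorphism $\alpha$ between these two objects then equips $(\E,\nabla)$ with an increasing filtration $D_\bullet$ (transported from the grading on the Frobenius-twist side), which is automatically horizontal for $\nabla$ and has zero $p$-curvature on $\gr_D\E$, together with a graded isomorphism $\varphi \colon Fr_X^\ast \gr_C\E \st{\sim}{\ra} \gr_D\E$. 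The fact that $\alpha$ is horizontal in $\MIC(X)$ forces $\varphi$ to intertwine $Fr_X^\ast\theta$ with the $p$-curvature map $\psi$ on $\gr_D\E$, which is exactly the last clause of Definition~\ref{def F-gauge}.

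The main technical obstacle lies in Step 1: rigorously identifying $\Vect(\ov{X^{\mathcal{N}}})$ with Griffiths-transverse Hodge-filtered connections in the precise form needed, and verifying that $j_1, j_2$ induce the two restriction functors above. The cleanest route is to reduce to the affine case by smooth descent, unwind the explicit presentations of $\ov{X^{\mathcal{N}}}$ and $\ov{X^{\Prism}}$ from Bhatt-Lurie in terms of (divided power) differential operators, and compare them with the classical descriptions via the PD differential operators of Subsection~\ref{subsect PD diff} and the conjugate filtration on $\D_X$ recalled in Subsection~\ref{subsection conj fil}. Once Steps 1 and 2 are in place, functoriality in morphisms is automatic, and the asserted equivalence follows from the universal property of the coequalizer.
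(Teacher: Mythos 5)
Your overall plan---realizing $\Vect(\ov{X^{\tr{syn}}})$ as a gluing of $\Vect(\ov{X^{\mathcal{N}}})$ along two maps to $\Vect(\ov{X^{\Prism}}) \simeq \Vect((X/k)^{dR})$---follows the same broad strategy as the paper, but Step 1 contains a genuine error that the rest of the argument cannot repair. You assert that $\ov{X^{\mathcal{N}}}$ fibers over $\A^1/\G_m$ in its Rees direction, so that $\Vect(\ov{X^{\mathcal{N}}})$ is the category of triples $(\E,\nabla,C^\bullet)$, i.e.\ Hodge-filtered flat connections with a \emph{single} filtration. That description is correct for the \emph{closed} substack $(X/k)^{dR,+} = \ov{X^{\mathcal{N}}}_{u=0}$, not for $\ov{X^{\mathcal{N}}}$ itself. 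After reduction mod $p$, the Nygaard-filtered prismatization lives over the nodal base $\ov{k^{\mathcal{N}}} = (\Spf\,k[u,t]/(ut))/\G_m$, not over $\A^1/\G_m$; it therefore has two components, the Hodge-filtered stack $(X/k)^{dR,+}$ (where $u=0$) and the conjugate-filtered stack $(X/k)^{dR,c}$ (where $t=0$), glued along $B\G_m$. Consequently $\Vect(\ov{X^{\mathcal{N}}})$ consists of tuples $(\E_1,\nabla_1,\Fil^\bullet,\E_2,\nabla_2,\Fil_\bullet,\varphi)$: a Hodge-filtered connection, a conjugate-filtered connection, and a graded isomorphism $\varphi$ between their associated gradeds. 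The conjugate filtration $D_\bullet$ and the Frobenius comparison $\varphi$ are already part of the data of a vector bundle on $\ov{X^{\mathcal{N}}}$; they are not produced by the syntomic gluing.

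This then breaks Step 2. You attempt to recover $D_\bullet$ from the isomorphism $\alpha$ between $(\E,\nabla)$ and the Cartier untwist of $F_{X/k}^\ast(\gr_C\E,\theta)$. But the target of $\alpha$ carries a \emph{grading}; transporting it back along an isomorphism in $\MIC(X)$ would give a grading (hence a split filtration) on $\E$ and would force an isomorphism $\E \simeq \gr_D\E$, which is strictly stronger than the de Rham $F$-gauge axioms. The conjugate filtration on a de Rham $F$-gauge is in general non-split, so no such $\alpha$-based construction can recover it. What the syntomic gluing actually does, once $\Vect(\ov{X^{\mathcal{N}}})$ is correctly identified, is much simpler: it identifies the two open copies of $(X/k)^{dR}$ sitting in the two components of $\ov{X^{\mathcal{N}}}$, i.e.\ it imposes $(\E_1,\nabla_1) \simeq (\E_2,\nabla_2)$. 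After this identification one is left with a single flat bundle $(\E,\nabla)$ equipped with both filtrations $C^\bullet$, $D_\bullet$ and the graded Frobenius $\varphi$, which is exactly a de Rham $F$-gauge. You should replace Step 1 by the nodal-base description of $\ov{X^{\mathcal{N}}}$ and redo Step 2 as the identification of the two de Rham loci, rather than attempting to extract $D_\bullet$ from the gluing isomorphism.
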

\begin{proof}
The proof is rather standard from the stacky approach as in \cite{Bhatt} and \cite{Dri20}, so we just sketch it here.
We need to understand the geometry of the stack $\ov{X^{\tr{syn}}}$. This will proceed step by step.
\begin{itemize}
	\item[Step 1.] First, consider the de Rham stack $(X/k)^{dR}$. We have a flat cover \[X\ra (X/k)^{dR},\] and its Cech nerve can be identified with the simplicial formal scheme $P_X(\bullet)$ defined by taking  PD formal completion of the Cech nerve of $X\ra \Spec\,k$ along the diagonal copy of $X$).
	 Then $\Vect((X/k)^{dR})$ is equivalent to the flat vector bundles on $X$. This is implied by \cite{Dri18} Theorem 2.4.2 or \cite{BL2} Theorem 6.5.
	 
	\item[Step 2.] Consider the Hodge filtered de Rham stack $(X/k)^{dR,+}$. By construction, we have a morphism $(X/k)^{dR,+}\ra \A^1/\mathbb{G}_m$ and a flat cover \[X\times \A^1/\mathbb{G}_m\ra (X/k)^{dR,+}\] over $\A^1/\mathbb{G}_m$.
	Moreover, we have an open immersion $(X/k)^{dR}\hookrightarrow (X/k)^{dR,+}$ corresponding to $\mathbb{G}_m/\mathbb{G}_m\hookrightarrow \A^1/\mathbb{G}_m$. Its complement is denoted by $(X/k)^{Higgs}$. Vector bundles on $(X/k)^{Higgs}$ is equivalent to graded Higgs bundles on $X$ such that the Higgs field decreases degree 1 and is nilpotent. The category $\Vect((X/k)^{dR,+})$ is equivalent to the category of triples $(\E, \nabla, \Fil^\bullet)$, where $(\E, \nabla)$ is a flat vector bundle on $X$, $\Fil^\bullet$ is a decreasing filtration on $\E$ which satisfies the Griffiths transversality condition with respect to $\nabla$, cf. \cite{Bhatt} Remark 2.5.8.
	
	\item[Step 3.] Consider the conjugate filtered de Rham stack $(X/k)^{dR,c}$. By construction, we have a morphism $(X/k)^{dR,c}\ra \A^1/\mathbb{G}_m$ and a flat cover \[ (X/k)^{dR,c}\ra X'\times \A^1/\mathbb{G}_m\] over $\A^1/\mathbb{G}_m$. Here $X'$ is the Frobenius pullback of $X$ and the relative Frobenius morphism $F_{X/k}: X\ra X'$ factors through $(X/k)^{dR}$, cf. \cite{Bhatt} Remark 2.7.4. We have an open immersion $(X/k)^{dR}\hookrightarrow (X/k)^{dR,c}$ corresponding to $\mathbb{G}_m/\mathbb{G}_m\hookrightarrow \A^1/\mathbb{G}_m$. Its complement is denoted by $(X/k)^{F-Higgs}$. Vector bundles on $(X/k)^{F-Higgs}$ is equivalent to graded $F$-Higgs bundles on $X$ such that the Higgs field decreases degree 1 and is nilpotent. Vector bundles on $(X/k)^{dR,c}$ can be described as triples $(\E,\nabla, \Fil_\bullet)$, where $(\E,\nabla)$ is a flat vector bundle on $X$ with nilpotent $p$-curvature, $\Fil_\bullet$ is an increasing filtration on $\E$ which is horizontal with respect to $\nabla$, such that the graded connection on $\gr_{\Fil_\bullet}\E$ has zero $p$-curvature.
	
	\item[Step 4.] Consider the stack $\ov{X^{\mathcal{N}} }:=X^{\mathcal{N}}\otimes_W k$, which is denoted by $X^C$ in \cite{Bhatt} Definition 2.8.4. There are open immersions \[(X/k)^{dR,+}\ra \ov{X^{\mathcal{N}}} \quad \tr{and}\quad (X/k)^{dR,c}\ra \ov{X^{\mathcal{N}}},\] with image $\ov{X^{\mathcal{N}}}_{u=0}$ and $\ov{X^{\mathcal{N}}}_{t=0}$ respectively. The stack $\ov{X^{\mathcal{N}}}$ lives over $\ov{k^{\mathcal{N}}}=k^{\mathcal{N}}\otimes_Wk=(\Spf\,k[u,t]/(ut))/\mathbb{G}_m$, so that there are coordinates $u$ and $t$. Vector bundles on $\ov{X^{\mathcal{N}}}$ can be described as \[(\E_1,\nabla_1,  \Fil^\bullet, \E_2,\nabla_2, \Fil_\bullet, \varphi)\] such that $(\E_1,\nabla_1,  \Fil^\bullet)\in \Vect((X/k)^{dR,+})$, $(\E_2,\nabla_2,  \Fil_\bullet)\in \Vect((X/k)^{dR,c})$, and $\varphi: Fr_X^\ast\gr_{\Fil^\bullet}\E_1\simeq \gr_{\Fil_\bullet}\E_2$ is an isomorphism of graded vector bundles.
	
	\item[Step 5.] Finally, by construction we have a pushout diagram
	\[\xymatrix{ 
		(X/k)^{dR}\coprod (X/k)^{dR}\ar[r]\ar[d]& \ov{X^{\mathcal{N}}} \ar[d]\\
		(X/k)^{dR}\ar[r] & \ov{X^{\tr{syn}}}.
		}\]
	In other words, $\ov{X^{\tr{syn}}}$ is obtained by gluing the two copies of $(X/k)^{dR}$ inside $\ov{X^{\mathcal{N}}}$.  From the above description of $\Vect(\ov{X^{\mathcal{N}}})$ we get that $\Vect(\ov{X^{\tr{syn}}})$  is equivalent to the category of tuples $(\E, \nabla, \Fil^\bullet, \Fil_\bullet, \varphi)$ which satisfy the condition as in Definition \ref{def F-gauge}.
\end{itemize}

\end{proof}

\subsection{De Rham $F$-gauges with $G$-structure}
We want a notion of de Rham $F$-gauges with $G$-structure, as a strengthen of the notion of $G$-zips in subsection \ref{subsection F-zip}. So recall our notations there.	Let $G/\F_p$ be a connected reductive group, $\mu: \G_m\ra G$ a cocharacter over a finite extension $\kappa|\F_p$ with associated conjugacy class $\{\mu\}$, $P_+=P_\mu$ the associated parabolic, $P_{-}$ the opposite parabolic, $L$ the common Levi subgroup of $P_+$ and $P_{-}$. 

Let $X/\kappa$ be a smooth scheme and $I$ a $G$-bundle on $X$. A flat connection $\nabla$ on $I$ is given by an isomorphism \[\nabla: pr_2^\ast I\st{\sim}{\lra}pr_1^\ast I\] such that $\delta^\ast\nabla=\tr{Id}_I$ and $pr_{13}^\ast\nabla=pr_{12}^\ast\nabla\circ pr_{23}^\ast\nabla$. Here are the meaning of the notations: let $\Delta^2(1)$ be the first order neighborhood of the diagonal $X\hookrightarrow X\times_\kappa X$, $\delta: X\hookrightarrow \Delta^2(1)$ the induced embedding and $pr_i: \Delta^2(1)\ra X$ the induced projections for $i=1,2$. Let  $\Delta^3(1)$ be the first order neighborhood of the diagonal $X\hookrightarrow X\times_\kappa X\times_\kappa X$, with induced projections $pr_{ij}: \Delta^3(1)\ra \Delta^2(1)$ for $i, j \in \{1,2,3\}$. It is well known that the datum of a flat $G$-bundle $(I,\nabla)$ is equivalent to an exact tensor functor \[\Rep\,G\ra \Vect^\nabla(X),\] where $\Vect^\nabla(X)$ is the category of flat vector bundles on $X$, cf. \cite{Lov} subsection 2.3.
	\begin{definition}\label{def G-gauge}
	A de Rham $F$-gauge with $(G,\mu)$-structure over $X$ is given by
	a tuple \[\underline{I} = (I, \nabla, I_+,I_-,\varphi),\] consisting of
	\begin{itemize}
		\item a  $G$-bundle $I$ over $X$ with a flat connection $\nabla$,
		\item a $P_+$-bundle $I_+ \subset I$,
		\item a  $P_-^{(p)}$-bundle $I_- \subset I$ on which $\nabla$ induces a connection, which has zero $p$-curvature on the graded  $L^{(p)}$-bundle,
		\item an isomorphism of $L^{(p)}$-bundles \[\varphi: I_{+}^{(p)} / U_+^{(p)} \st{\sim}{\lra} I_{-} / U_-^{(p)} ,\] under which the associated Higgs field and $p$-curvature coincides  \[\varphi: Fr_X^\ast\theta_\nabla\simeq \psi_\nabla.\]
	\end{itemize}
	Sometimes we also call it a $(G,\mu)$-de Rham $F$-gauge.
	Denote the category of such objects by $F\tr{-Gauge}_{dR}^{G,\mu}(X)$.
\end{definition}
Let us explain the last term following \cite{Dri23} (but note that our sign $\pm$ correspond to the $\mp$ of loc. cit.). Let $\u_{\pm}=\Lie\,U_{\pm}$. Then $\u_-$ (resp. $\u_-^{(p)}$) forms a representation of $L$ (resp. $L^{(p)}$), thus we can view it as a representation of
$P=P_+$ (resp. $P_-^{(p)}$) by the quotient $P\ra L$ (resp. $P_-^{(p)}\ra L^{(p)}$). For $(I, \nabla, I_+,I_-)$ as above, we get the associated vector bundles \[I_{+}(\u_-)=I_+\times^{P_+}\u_- \quad \tr{and} \quad I_{-}(\u_-^{(p)})=I_-\times^{P_-^{(p)}}\u_-^{(p)}.\] Then the Higgs field attached to $(\nabla, I_+)$ is given by a map
\[\theta_\nabla: \T_X\lra I_{+}(\u_-), \]or equivalently a global section
$\theta_\nabla\in H^0(X, I_{+}(\u_-)\otimes\Omega_X^1)$. The $p$-curvature attached to $(\nabla, I_-)$ is given by a map
\[\psi_\nabla: \T_X\lra I_{-}(\u_-^{(p)}),\]or equivalently a global section $\psi_\nabla\in H^0(X, I_{-}(\u_-^{(p)})\otimes \Omega_X^1)$. From the isomorphism of $L^{(p)}$-bundles  \[\varphi: I_{+}^{(p)} / U_+^{(p)} \st{\sim}{\lra} I_{-} / U_-^{(p)},\] we get an induced isomorphism \[\varphi: Fr_X^\ast I_{+}(\u_-)=I_{+}(\u_-)^{(p)}\st{\sim}{\lra} I_{-}(\u_-^{(p)}).\]
The condition $\varphi: Fr_X^\ast\theta_\nabla\simeq \psi_\nabla$ means that $\theta_\nabla$ is mapped to $\psi_\nabla$ under the induced morphism
\[H^0(X, I_{+}(\u_-)\otimes\Omega_X^1)\lra H^0(X, Fr_X^\ast I_{+}(\u_-)\otimes\Omega_X^1)\st{\sim}{\lra}H^0(X, I_{-}(\u_-^{(p)})\otimes\Omega_X^1). \] One checks easily that this agrees with the definition in subsection \ref{subsection explicit} when $G=\GL_n$.

Equivalently, we can give a Tannakian definition: a de Rham $G$-gauge of type $\mu$ over $X$ is an exact $\F_p$-linear tensor functor 
\[\mathfrak{I}: \Rep\,G\ra F\tr{-Gauge}_{dR}(X)\] of type $\mu$. This means that the induced functor $\mathfrak{z}: \Rep\,G\ra F\tr{-Zip}(X)$ (by composing $\mathfrak{I}$ with the forgetful functor $F\tr{-Gauge}_{dR}(X)\ra F\tr{-Zip}(X)$) has type $\mu$, cf. subsection \ref{subsection F-zip}. 

By construction,
we have the natural forgetful functor \[F\tr{-Gauge}_{dR}^{G,\mu}(X)\lra G\tr{-Zip}^\mu(X).\]
In the Tannakian formulation, we can define a larger category $F\tr{-Gauge}_{dR}^{G}(X)$ by allowing $\mu$ varies. Then one has a decomposition of $F\tr{-Gauge}_{dR}^{G}(X)$ according to different types, similar to the case of $G$-zips as in \cite{PWZ15} subsection 7.1. The forgetful functor extends to $F\tr{-Gauge}_{dR}^{G}(X)\lra G\tr{-Zip}(X)$.

\begin{remark}
		Assume that $\mu$ is minuscule. In \cite{Dri23} Drinfeld studied moduli stacks of similar objects as Definition \ref{def G-gauge}, see Theorem \ref{thm Drinfeld}. Compared with here, except the above difference on sign,  there are 2 subtleties to mention: first, in \cite{Dri23} the cocharacter $\mu$ is also defined over $\F_p$, so that there is no Frobenius twist on $P_-$; next, instead of an isomorphism $\varphi$ in the last term, Drinfeld requires an equality (called the Katz condition) between $\theta_\nabla$ and $\psi_\nabla$ by a careful choice of sign\footnote{Such a choice of sign is also included in the definition of the Cartier and inverse Cartier transforms $C_{\X_2/W_2}$ and  $C_{\X_2/W_2}^{-1}$  in \cite{OV}.}: $\psi_\nabla=-\theta_\nabla$.
\end{remark}

\subsection{Moduli stacks of de Rham $F$-gauges with $G$-structure}\label{subsection moduli dR gauges}
In this subsection, assume that $\mu$ is \emph{minuscule}. 
We consider the moduli of de Rham $F$-gauges with $(G,\mu)$-structure.  This has already been studied by Drinfeld (see also \cite{GM}).
	\begin{theorem}[Drinfeld \cite{Dri23}]\label{thm Drinfeld}
		The functor $X\mapsto F\tr{-Gauge}_{dR}^{G,\mu}(X)$ is represented by a smooth algebraic stack $BT^{G,\mu}_1$ over $\kappa$. The natural morphism \[BT^{G,\mu}_1\ra G\tr{-Zip}^\mu\] is an fppf gerbe banded by a finite flat commutative group scheme $Lau_1^G$.
	\end{theorem}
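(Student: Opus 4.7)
The strategy is to combine the Tannakian reformulation of $F\tr{-Gauge}_{dR}^{G,\mu}$ with the syntomic stack picture of Theorem \ref{thm F-gauge via stack}, then analyze the forgetful morphism $BT_1^{G,\mu}\to G\tr{-Zip}^\mu$ directly as a gerbe. First I would establish representability: by the Tannakian description, an object of $F\tr{-Gauge}_{dR}^{G,\mu}(S)$ is an exact tensor functor $\mathfrak{I}\colon \Rep\,G\to F\tr{-Gauge}_{dR}(S)$ of type $\mu$, equivalently a $G$-bundle on $\overline{S^{\mathrm{syn}}}$ whose induced Hodge filtration has type $\mu$. Representability as an algebraic stack then follows by choosing a faithful representation $G\hookrightarrow \GL_n$, cutting $BT_1^{G,\mu}$ out of the moduli of rank-$n$ de Rham $F$-gauges via the tensor data, and noting that the type-$\mu$ condition is both open and closed.

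Next I would verify the gerbe property. The forgetful map is an fppf gerbe iff (a) it is fppf-surjective, that is every $G$-zip lifts fppf-locally to a de Rham $F$-gauge with $(G,\mu)$-structure, and (b) the automorphism sheaf of any lift fixing the underlying $G$-zip is representable by a fixed finite flat group scheme $Lau_1^G$. For (a), after choosing a smooth cover over which the $G$-bundle trivializes and the base is \'etale over $\mathbb{A}^n$, one must produce a flat connection $\nabla$ on the trivial $G$-bundle that is Griffiths transverse to $I_+$, horizontal on $I_-$ with zero graded $p$-curvature, and compatible with $\varphi$. Using that $\mu$ is minuscule, so that $\mathfrak{u}_{\pm}$ sit cleanly in a grading of $\mathfrak{g}$, this reduces to an explicit system of equations in $\mathfrak{g}$-valued $1$-forms that can be solved fppf-locally.

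For (b), I would compute the automorphism group scheme of a canonical lift over a fixed point of $G\tr{-Zip}^\mu$. Such automorphisms are those elements of $G$ fixing $(I_+,I_-,\varphi)$ and preserving $\nabla$; the second condition imposes a Frobenius-compatibility that cuts a finite subgroup scheme out of the Levi-like stabilizer. Its value is independent of the chosen point by fppf-local triviality, yielding the group scheme $Lau_1^G$ in Laumon's sense. Smoothness and dimension zero of $BT_1^{G,\mu}$ then follow formally, since an fppf gerbe banded by a finite flat group scheme over a smooth stack of dimension $0$ is itself smooth of dimension $0$.

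The main obstacle is part (a) of the gerbe property, namely proving that the space of connections $\nabla$ enhancing a fixed $G$-zip is fppf-locally nonempty and uniformly a torsor under $Lau_1^G$. This is a genuinely deformation-theoretic statement: one must show that the obstruction to lifting a $G$-zip to a de Rham $F$-gauge with $(G,\mu)$-structure vanishes fppf-locally, which requires careful use of the syntomic stack picture together with the minusculeness hypothesis on $\mu$. Away from this core issue, the remaining arguments are formal consequences of general Tannakian and stack-theoretic principles, and the smoothness/dimension claim propagates automatically from the corresponding result for $G\tr{-Zip}^\mu$ proved in \cite{PWZ15}.
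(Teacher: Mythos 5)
The paper does not prove this theorem; it is quoted verbatim from Drinfeld's work \cite{Dri23} and stated as a cited result, so there is no ``paper's own proof'' to compare against. Your proposal is therefore an attempt to reconstruct Drinfeld's argument from scratch, and while the high-level shape (representability via a faithful representation, then checking the two gerbe axioms) is plausible, the two hard steps are precisely the ones you flag and then leave unresolved.

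On the gerbe axioms: (a) you assert that fppf-local liftability of a $G$-zip to a de Rham $F$-gauge ``reduces to an explicit system of equations'' solvable fppf-locally. That is not a proof; it is a restatement of the problem. The genuine content in Drinfeld's argument is a deformation-theoretic/cohomological analysis identifying the obstruction to such a lift and showing that it vanishes fppf-locally because the relevant obstruction group is controlled by a finite flat group scheme. (b) Your description of the band as ``a finite subgroup scheme cut out of the Levi-like stabilizer'' is not correct. As recorded in the paper right after the theorem statement, $Lau_1^G$ is the height-$1$ group scheme attached to the commutative restricted Lie algebra $I_+(\u_-)$, with $p$-operation the composite
\[
Fr_X^\ast I_{+}(\u_-)\st{\sim}{\lra} I_{-}(\u_-^{(p)})\hookrightarrow I_-(\g^{(p)})\simeq I_-(\g)\simeq I_+(\g)\twoheadrightarrow I_+(\u_-).
\]
This is a nilpotent, height-$1$ object built from $\u_-$ and the $p$-curvature comparison isomorphism, not a subgroup of a Levi stabilizer. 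Finally, your claim that smoothness and dimension zero ``follow formally'' needs justification: $Lau_1^G$ is finite flat but not smooth as a group scheme, so smoothness of $BLau_1^G$ is a real (if known) fact, cited in the paper from \cite{Dri23} B.0.1 rather than deduced by general nonsense. In short, your outline is a reasonable roadmap of what needs to happen, but the crucial deformation-theoretic computation and the precise identification of the band are missing, and these are the substance of the theorem.
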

	More precisely, we only defined $BT^{G,\mu}_1(X)=F\tr{-Gauge}_{dR}^{G,\mu}(X)$ for smooth schemes $X/\kappa$. By \cite{Dri23} Lemma 4.1.2, Remark 4.1.3 and Appendix A, $BT^{G,\mu}_1$ is uniquely determined by its restriction to the category of smooth schemes over $\kappa$, in the sense it is the sheafified left Kan extension of its restriction to the category of smooth schemes.
	Over a scheme $X\ra G\tr{-Zip}^\mu$ over $G\tr{-Zip}^\mu$, the finite flat group scheme $Lau_1^G$ is the finite group scheme of height 1 corresponding to the commutative restricted Lie $\Ol_X$-algebra $I_+(\u_-)$ with $p$-operation coming from the composition
	\[Fr_X^\ast I_{+}(\u_-)\st{\sim}{\lra} I_{-}(\u_-^{(p)})\hookrightarrow I_-(\g^{(p)})\simeq I_-(\g)\simeq I_+(\g)\twoheadrightarrow I_+(\u_-). \]
	In particular,
	over $k=\ov{\F}_p$, the finite group scheme $Lau_1^G$ has Lie algebra $\Lie\,Lau_1^G=\u_-$.
	As $X$ varies, these group schemes are compatible under base change, which define a group scheme over $G\tr{-Zip}^\mu$, still denoted by $Lau_1^G$. For each scheme $X/\kappa$ and a morphism $f: X\ra G\tr{-Zip}^\mu$, the fiber of $BT^{G,\mu}_1(X)\ra G\tr{-Zip}^\mu(X)$ at $f\in G\tr{-Zip}^\mu(X)$ is the groupoid of $Lau_1^G$-torsors over $X$.
	
	In particular, the underlying topological spaces of the two moduli stacks are homeomorphic: \[| BT^{G,\mu}_1|\simeq |G\tr{-Zip}^\mu|.\] As in subsection \ref{subsection F-zip}, the latter was known by the works of Pink-Wedhorn-Ziegler ( \cite{PWZ11} and \cite{PWZ15}): \[|G\tr{-Zip}^\mu|\simeq ({}^{J}W, \preceq),\] where $J\subset W$ is the set of simple generators defined by $\mu$, ${}^{J}W$ is the set of minimal length representatives for $W_J\backslash W$.
		
		The stack $BT^{G,\mu}_1$ should be the special fiber of a smooth algebraic stack living over $\Ol_E$, see \cite{Dri23} Conjecture C.3.1 (which has been proved by Gardner-Madapusi, cf. \cite{GM} Theorem A).
		If $G=\GL_h$ and $\{\mu\}$ is given by $(1^{d}, 0^{h-d})$ for some integer $0\leq d\leq h$, Drinfeld also conjectures that the stack $BT^{G,\mu}_1$ is isomorphic to the stack of 1-truncated $p$-divisible groups of height $h$ and dimension $d$, cf. \cite{Dri23} Conjectures 4.5.2 and 4.5.3. Recently, Gardner and Madapusi  announced a theorem which says that there is an equivalence between the category of  truncated $p$-divisible groups of height $h$ and dimension $d$ and the category $F\tr{-Gauge}_{dR}^{G,\mu}(X)$ (see \cite{GM} Conjecture 1 and the paragraph below it).  In particular, this equivalence implies Drinfeld's conjecture.

\section{Applications to  Shimura varieties}
We apply the theory of de Rham $F$-gauges to the study of good reductions of Shimura varieties. On the one hand, we construct universal de Rham $F$-gauges on the reduction mod $p$ of smooth integral canonical models for Shimura varieties of Hodge type. On the other hand, we study the cohomology of de Rham $F$-gauges on these varieties by dual BGG complexes.

\subsection{Automorphic vector bundles in characteristic $p$}\label{subsection auto vb}
Let $(\mathbf{G, \mathfrak{X}})$ be a Shimura datum of Hodge type and $p$ a prime such that $\mathbf{G}$ is unramified at $p$.
Then we have a reductive integral model $\mathbf{G}_{\Z_p}$ of $\mathbf{G}_{\Q_p}$.
 Fix a sufficiently small open compact subgroup $K^p\subset \mathbf{G}(\A_f^p)$ and set $K=K^p\mathbf{G}_{\Z_p}(\Z_p)$. By \cite{Kis} and \cite{KM} we have the smooth integral canonical model $\mathscr{S}_K$ over $\Ol_E$ of the Shimura variety $\Sh_K$ over the local reflex field $E$. By \cite{MP} and \cite{Lan13}, we have smooth toroidal compactifications $\mathscr{S}_{K,\Sigma}^{\tr{tor}}$.
Let $X=\mathscr{S}_K\otimes_{\Ol_E}\ov{\F}_p$ (resp. $X^{\tr{tor}}=\mathscr{S}_{K,\Sigma}^{\tr{tor}}\otimes_{\Ol_E}\ov{\F}_p$) be geometric special fiber of $\mathscr{S}_K$ (resp. $\mathscr{S}_{K,\Sigma}^{\tr{tor}}$). Denote $G=\mathbf{G}_{\Z_p}\otimes\F_p$. As in \cite{SZ} 2.2.1, the conjugacy class of the attached Hodge cocharacter has a  representative defined over $\Ol_E$. Let $\kappa$ be the residue field of $\Ol_E$, $\mu$ the induced cocharacter of $G$ over $\kappa$, and $\Fl_{G,\mu}$ the associated flag variety over $k:=\ov{\kappa}=\ov{\F}_p$.


Recall that we have the following diagram of schemes over $k$
\[ \xymatrix{ & \wt{X}\ar[ld]_\tau \ar[rd]^q&\\
	X & & \Fl_{G,\mu},
	}\]
	where $\tau$ is a $G$-torsor and $q$ is $G$-equivariant. This is the geometric special fiber of  the diagram (in the good reduction case) in \cite{KP} Theorem 4.2.7. In terms of the following subsection, this diagram corresponds to the pair $(I, I_+)$ with $\wt{X}=I$ and $q$ induced by the $P_\mu$-torsor $I_+$.
Let $P=P_\mu=P_+$ with associated Levi $L$. Since $\Fl_{G,\mu}\cong G/P$ as schemes over $k$, it is well known that there is an equivalence of categories \[\Rep\,P\simeq \Vect^G(\Fl_{G,\mu}),\] where the later is the category of $G$-equivariant vector bundles on  $\Fl_{G,\mu}$. Then
from the above diagram, we get the functor
\[\E(\cdot): \Rep\,P\lra \Vect(X).\]
The objects in the essential image are called autormorphic vector bundles on $X$. For any representation $V$ of $G$, viewed as a representation of $P$ by the inclusion $P\subset G$, the associated vector bundle $\V=\E(V)$ admits a natural flat connection $\nabla$. On the other hand, for any representation $W$ of $L$, viewed as a representation of $P$ via the projection $P\ra L$,  we have also the associated (semisimple) vector bundle $\mathcal{W}$ over $X$.

Considering the pullback of the above diagram under the Frobenius $Fr_k: \Spec\,k\ra \Spec\,k$, we get a commutative diagram
\[\xymatrix{ \Rep\,P\ar[r]^{\E(\cdot)}\ar[d]_{\pi^\ast_P}& \Vect(X)\ar[d]^{\pi^\ast_X}\ar[r]^{Fr_X^\ast} & \Vect(X).\\
	\Rep\,P^{(p)}\ar[r]^{\E^{(p)}(\cdot)}& \Vect(X')\ar[ru]_{F_{X/k}^\ast} &
	} \]
In other words, we have a natural isomorphism of functors
\[ Fr_X^\ast\circ\E(\cdot)\simeq F_{X/k}^\ast\circ \E^{(p)}(\cdot)\circ \pi^\ast_P.\]
On the other hand, we have also an isomorphism of functors
$\E(\cdot)\circ Fr_P^\ast\simeq Fr_X^\ast\circ\E(\cdot)$. Let $Fr_{P/k}: P\ra P^{(p)}$ be the relative Frobenius morphism for $P$. Then $Fr_P=\pi_P\circ Fr_{P/k}$. For $V\in \Rep\,P$, we get $Fr_P^\ast V=Fr_{P/k}^\ast\circ \pi_P^\ast V= Fr_{P/k}^\ast(V^{(p)})\in \Rep\,P$, the Frobenius twisted representation.

Fix $T\subset B\subset P$ a maximal torus and Borel subgroup inside $P$. Let $\lambda\in X^\ast(T)_+$ (resp. $X^\ast(T)_{L,+}$) be the associated set of dominant characters ($L$-dominant characters). For any $\lambda\in X^\ast(T)_+$, we get the representation $V_\lambda$ of $G$ with highest weight $\lambda$, and the associated flat vector bundle $(\mathcal{V}_\lambda, \nabla)$ over $X$.  On the other hand,
 for $\eta\in X^\ast(T)_{L,+}$, we have also the associated (semisimple) vector bundle $\mathcal{W}_\eta$ over $X$.
 
 \subsection{De Rham $F$-gauges on good reductions of Shimura varieties}
 
 By works of Moonen-Wedhorn \cite{MW}, C. Zhang \cite{Zhang2018EO}, Goldring-Koskivirta \cite{GoldringKoskivirta2019}, Lan-Stroh \cite{LS18}, and Andreatta \cite{And22}, there exits a smooth morphism of algebraic stacks
 \[\zeta: X\lra G\tr{-Zip}^\mu.\]
 We get the induced Ekedahl-Oort stratification  \[X=\coprod_{w\in {}^{J}W}X^w.\] 
 Via the theory of generalized Hasse invariants, this is of key importance to understand the coherent cohomology of $X$ and construction of automorphic Galois representations in irregular and torsion case, cf. \cite{GoldringKoskivirta2019}. 
 
 The morphism $\zeta$ corresponds to a universal $G$-zip $(I, I_+, I_-, \varphi)$ of type $\mu$ over $X$, which we briefly review. Fix a symplectic embedding $i: (\mathbf{G, \mathfrak{X}})\hookrightarrow (\mathrm{GSp}(V,\psi), S^\pm)$ and a lattice $V_{\Z_p}\subset V_{\Q_p}$ such that we have a closed embedding $\mathbf{G}_{\Z_p}\subset \GL(V_{\Z_p})$. Fix
  a finite collection of tensors $s_\alpha\in V^\otimes_{\Z_p}$ defining $\mathbf{G}_{\Z_p}\subset \GL(V_{\Z_p})$. Let \[\mathcal{A}\lra X\] be the pull back of the universal abelian scheme over the associated Siegel modular variety. Consider the vector bundle \[\mathcal{V}=H^1_{\dR}(\mathcal{A}/X)\] together with the Gauss-Manin connection $\nabla$ over $X$. We have also the Frobenius and Verschiebung maps $F: \V\ra \V$ and $V: \V\ra \V$.
  The Hodge filtration $C_\bullet$ on $\V$ is given by $\V\supset \tr{Ker}\, F\supset 0$ and the conjugate filtration $D_\bullet$ on $\V$ is given by $0\subset \tr{Im}\, V \subset \V$. We get a de Rham $F$-gauge \[(\V, \nabla, C^\bullet, D_\bullet, \varphi)\] by Example \ref{example F-gauge}.
   By \cite{Kis} Corollary 2.3.9 and \cite{KM} Proposition 4.8, we have de Rham tensors $s_{\alpha,\dR}\in \V^\otimes$. Set $M=(V_{\Z_p}\otimes\F_p)^\vee$. Then the cocharacter $\mu$ induces a decreasing filtration $M\supset M^1\supset 0$ and the cocharacter $\mu^{(p)}$ induces an increasing filtration $0\subset M_0\subset M$.
  The $G$-zip $(I, I_+, I_-, \varphi)$  of type $\mu$ over $X$ is defined by \begin{itemize}
  \item $I=\Isom_X\Big((M, s_\alpha)\otimes\Ol_X, (\V, s_{\alpha,\dR})\Big)$,
  \item $I_+=\Isom_X\Big((M\supset M^1, s_{\alpha})\otimes\Ol_X, (\V \supset \tr{Ker}\, F, s_{\alpha,\dR})\Big)$,
  \item $I_-=\Isom_X\Big((M_0\subset M, s_{\alpha})\otimes\Ol_X, (\tr{Im}\, V\subset\V, s_{\alpha,\dR})\Big)$,
  \item $\varphi: I_+^{(p)}/U_+^{(p)}\ra I_-/U_-^{(p)}$ is the isomorphism with precise formula given in \cite{Zhang2018EO} Theorem 2.4.1.
\end{itemize} 
 Here is the first main result of this section, which is motivated by the works of Drinfeld \cite{Dri23, Drin} (in particular, see \cite{Drin} subsection 4.3).
 \begin{theorem}\label{thm univ F-gauge}
 	Let $X=\mathscr{S}_K\otimes_{\Ol_E}\kappa$ be the reduction modulo $p$ of the integral canonical model of a Hodge type Shimura variety with hyperspecial level at $p$.
 	There is a natural de Rham $F$-gauge with $(G, \mu)$-structure over $X$, which induces the universal $G$-zip of type $\mu$ given by $\zeta$. The induced morphism $\xi: X\ra BT^{G,\mu}_1$ is smooth.
 	In other words, we have a commutative diagram of smooth morphisms of algebraic stacks over $\kappa$:
 	\[ \xymatrix{ X\ar[r]^-\xi\ar[rd]_\zeta & BT^{G,\mu}_1\ar[d]\\
 		& G\tr{-Zip}^\mu.
 	}\]
 \end{theorem}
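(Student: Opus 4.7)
The plan is to construct the de Rham $F$-gauge with $(G,\mu)$-structure by enriching Zhang's universal $G$-zip with the Gauss-Manin connection. Starting from the symplectic embedding $i\colon (\mathbf{G}, \mathfrak{X}) \hookrightarrow (GSp(V,\psi), S^\pm)$, pull back the universal abelian scheme to get $\mathcal{A}\to X$ and set $\mathcal{V}=H^1_{\dR}(\mathcal{A}/X)$ with its Gauss-Manin connection $\nabla$. Since the Hodge-de Rham spectral sequence of an abelian scheme degenerates, Example \ref{example F-gauge} already equips $\mathcal{V}$ with a canonical de Rham $F$-gauge structure $(\mathcal{V},\nabla,C^\bullet,D_\bullet,\varphi)$, in which $C^\bullet$ is the Hodge filtration, $D_\bullet$ the conjugate filtration, and $\varphi$ is the Cartier-induced isomorphism of Katz. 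The de Rham tensors $s_{\alpha,\dR}\in\mathcal{V}^\otimes$ of Kisin and Kim-Madapusi Pera will cut out the $(G,\mu)$-structure.

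The next step is to promote Zhang's data $(I,I_+,I_-,\varphi)$ to a de Rham $F$-gauge with $(G,\mu)$-structure $(I,\nabla,I_+,I_-,\varphi)$. Concretely, I would verify that each piece of the $F$-gauge on $\mathcal{V}$ is compatible with the tensors $s_{\alpha,\dR}$: (i) the tensors are horizontal under $\nabla$ (a classical property of Gauss-Manin tensors of geometric origin), so $\nabla$ descends to a flat connection on the $G$-torsor $I$; (ii) the $P_+$-reduction $I_+$ coming from the Hodge filtration is Griffiths transverse (classical); (iii) the $P_-^{(p)}$-reduction $I_-$ coming from the conjugate filtration is horizontal under $\nabla$, and the induced connection on $\gr_D$ has vanishing $p$-curvature (a classical consequence of the Cartier isomorphism applied to Gauss-Manin); (iv) Katz's $\varphi$ matches $\theta_\nabla$ with $\psi_\nabla$ as required in Definition \ref{def G-gauge}. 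Equivalently, via the Tannakian formulation, it suffices to upgrade Zhang's functor $\Rep\,G\to G\tr{-Zip}^\mu(X)$ to an exact tensor functor $\Rep\,G\to F\tr{-Gauge}_{\dR}(X)$ of type $\mu$ by sending $V\mapsto I\times^G V$ with its induced structures. Forgetting $\nabla$ recovers Zhang's $G$-zip by construction, so the triangle $\pi\circ\xi=\zeta$ commutes and yields the desired $\xi\colon X\to BT^{G,\mu}_1$.

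For the smoothness of $\xi$, the plan is to combine the smoothness of $\zeta$ with the cotangent complex of the gerbe $\pi$. Since $\pi\colon BT^{G,\mu}_1\to G\tr{-Zip}^\mu$ is an fppf gerbe banded by the finite group scheme $Lau_1^G$, the relative cotangent complex $L_\pi$ is concentrated in cohomological degree $-1$, being a shift of $\Lie(Lau_1^G)^\vee$. The distinguished triangle
\[ \xi^*L_\pi \longrightarrow L_\zeta \longrightarrow L_\xi \longrightarrow \xi^*L_\pi[1] \]
combined with $L_\zeta$ locally free in degree $0$ of rank $\dim X$ (smoothness of $\zeta$) gives in cohomology $H^{-1}(L_\xi)=0$ and $H^0(L_\xi)\simeq H^0(L_\zeta)$ locally free of rank $\dim X$. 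Since $\pi$ is faithfully flat and $\zeta=\pi\circ\xi$ is flat, $\xi$ is flat by faithfully flat descent of flatness. Combined with the cotangent computation, this yields smoothness of $\xi$.

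The main obstacle I anticipate is the tensor-level verification in step (iv) above: namely, checking that Katz's Cartier-induced isomorphism $\varphi$ preserves all the Hodge tensors $s_{\alpha,\dR}$ and thereby realizes precisely Zhang's $L^{(p)}$-equivariant isomorphism $I_+^{(p)}/U_+^{(p)} \simeq I_-/U_-^{(p)}$. This should follow from the explicit formulas in \cite{Zhang2018EO} combined with the standard functoriality of the Cartier isomorphism, but it is the only point where the tensor formalism and the Frobenius structure interact non-trivially, and hence is the true content of the theorem beyond Zhang's result.
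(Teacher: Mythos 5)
Your construction of the de Rham $F$-gauge with $(G,\mu)$-structure is essentially the paper's direct approach: pull back the universal abelian scheme, take $\mathcal{V}=H^1_{\dR}(\mathcal{A}/X)$ with its Gauss-Manin connection, and cut out the $G$-structure via the de Rham tensors $s_{\alpha,\dR}$, which are horizontal. The compatibility you flag in step (iv) is already built into Zhang's universal $G$-zip $(I,I_+,I_-,\varphi)$, so once horizontality of the tensors gives the flat connection on $I$, the remaining axioms follow because the induced $(I(M),\nabla)\simeq(\mathcal{V},\nabla)$ together with $I_\pm(M)$ is precisely the de Rham $F$-gauge coming from the abelian scheme. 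This part is fine. The paper also notes an alternative route through Lovering's universal filtered $F$-crystal, but your direct construction matches the paper's second argument.

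The smoothness argument, however, is where your proposal breaks down, and it takes a genuinely different route from the paper. The paper does not use cotangent complexes; instead it pulls back along the $E_{G,\mu}$-torsor $X^\#\to X$, factors the resulting map $X^\#\to\widetilde{G}$ through a section of the gerbe $\widetilde{X^\#}\to X^\#$ and a base change of the smooth $\zeta^\#$, and invokes Drinfeld's result (\cite{Dri23} B.0.1) that $BLau_1^G$ is a smooth stack. Your cotangent-complex argument contains a concrete error: for a gerbe banded by a finite group scheme $\Gamma$ of height $1$, the relative cotangent complex $L_\pi$ is \emph{not} concentrated in degree $-1$. Locally the gerbe looks like $Z\times B\Gamma\to Z$, and for $\Gamma\simeq\mathrm{Spec}(k[x_1,\dots,x_n]/(x_1^p,\dots,x_n^p))$ one computes $L_{\Gamma/k}$ to have nonzero cohomology in both degrees $-1$ and $0$; descending through the atlas and shifting gives $L_{B\Gamma/k}$, and hence $L_\pi$, with cohomology in degrees $0$ and $1$, not degree $-1$. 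Degree $-1$ would be the signature of a regular closed immersion, not of a gerbe. Moreover, even after correcting the degrees, your long exact sequence gives $H^{-1}(L_\xi)=\ker\bigl(H^0(\xi^*L_\pi)\to H^0(L_\zeta)\bigr)$, and there is no reason from the data you cite that this map is injective; that injectivity is exactly the nontrivial content that the paper's explicit geometric factorization (and Drinfeld's smoothness of $BLau_1^G$) is designed to capture. Finally, the claim ``$\pi$ faithfully flat and $\zeta=\pi\circ\xi$ flat implies $\xi$ flat by descent'' is false in general: take $\pi\colon\mathbb{A}^1\to\mathrm{Spec}\,k$ and $\xi\colon\mathrm{Spec}\,k\hookrightarrow\mathbb{A}^1$ the origin, where $\zeta$ is flat but $\xi$ is not. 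Flatness of $\xi$ must be extracted from the specific structure, not from a cancellation of this kind.
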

 \begin{proof}
 In fact we have two approaches. First, applying the main result of \cite{Lov} that there exists a universal filtered $F$-crystal on the formal completion of the integral canonical model $\X$ of the involved Shimura variety, we take its reduction modulo $p$ to get (similarly as Proposition \ref{prop MF dR gauge}) the desired exact tensor functor \[\Rep\,G\ra  F\tr{-Gauge}_{dR}(X).\]
 The fact that this construction lifts the universal $G$-zip on $X$ follows from the discussions in \cite{SZ} subsection 5.3. 
 
 Next we explain a direct construction, as an enrichment of the construction of the universal $G$-zip in \cite{Zhang2018EO}. We already have the universal $G$-zip of type $\mu$ given by \[(I, I_+, I_-, \varphi).\] The task is to find a flat connection $\nabla$ on $I$ having all the required properties so that $(I, \nabla, I_+, I_-, \varphi)$ forms a de Rham $F$-gauge of type $\mu$.  By \cite{Kis} and \cite{KM} the de Rham tensors $s_{\alpha,\dR}\in \V^\otimes$ are horizontal with respect to the Gauss-Manin connection $\nabla$ on $\V$. By the definition of $I$ above, we get an induced connection $\nabla$ on $I$. It satisfies the required properties as in Definition \ref{def G-gauge} as the induced flat vector bundle $(I(M), \nabla)=(\V,\nabla)$ together with $I_+(M)$ and $I_-(M)$ forms a de Rham $F$-gauge $(\V,\nabla, C^\bullet,D_\bullet,\varphi)$. Therefore we get a morphism of algebraic stacks
 \[\xi: X\ra BT^{G,\mu}_1,\]which makes
 the following diagram commute
 \[\xymatrix{
 	X\ar[r]^{\xi} \ar[rd]_{\zeta} & BT^{G,\mu}_1\ar[d]^\pi\\
 	& G\tr{-Zip}^\mu.
 } \]
 
 We already know that $\zeta: X\ra G\tr{-Zip}^\mu$ is smooth by \cite{Zhang2018EO} Theorem 4.1.2. 
 The morphism $\pi: BT^{G,\mu}_1\ra G\tr{-Zip}^\mu$ is a gerbe
 banded by $Lau_1^G$, thus it is a smooth and universal homeomorphism (cf. \cite{Sta} Tags 0DN8 and 06R9). 
 The morphism $\xi: X\ra BT^{G,\mu}_1$ is equivalent to a section over $X$ of the morphism $BT^{G,\mu}_1\ra G\tr{-Zip}^\mu$, which can be checked to be smooth. More precisely, let $X^\#\ra X$ be the pullback of the $E_{G,\mu}$-torsor $G\ra G\tr{-Zip}^\mu$, we get a smooth morphism of schemes $\zeta^\#: X^\#\ra G$.  Let $\wt{X^\#}\ra X^\#$ (resp. $\wt{G}\ra G$) be the pullback of the morphism $BT^{G,\mu}_1\ra G\tr{-Zip}^\mu$ under the induced map $X^\#\ra G\tr{-Zip}^\mu$ (resp. $G\ra G\tr{-Zip}^\mu$). In other words, we have the following commutative diagram
 \[\xymatrix{
 	\wt{X^\#}\ar[d]\ar[r]^{\widetilde{\zeta^\#}}&\wt{G}\ar[d]\ar[r]& BT^{G,\mu}_1\ar[d]\\
 	X^\#\ar[d]\ar[r]^{\zeta^\#}& G\ar[r]\ar[d] & G\tr{-Zip}^\mu\\
 	X\ar[r]^\zeta& G\tr{-Zip}^\mu\ar@{=}[ru]& 
 	}\]
 	with all squares cartesian.
 Then the morphism $\xi: X\ra BT^{G,\mu}_1$ is obtained by quotient by $E_{G,\mu}$ of \[\xi^\#: X^\#\ra \wt{G},\] the composition of the induced smooth morphism $\widetilde{\zeta^\#}: \wt{X^\#}\ra \wt{G}$ and a section $s: X^\#\ra \wt{X^\#}$ of the gerbe $\wt{X^\#}\ra X^\#$ banded by $Lau_1^G$. In other words, the gerbe $\wt{X^\#}\ra X^\#$ splits and $\wt{X^\#}\simeq [X^\#/L]$ where $L$ is the Lau group scheme over $X^\#$. The section $s: X^\#\ra \wt{X^\#}$  is thus the canonical $L$-torsor over $\wt{X^\#}$. Although $s$ is only flat, we claim that the composition $\xi^\#=\widetilde{\zeta^\#}\circ s$ is smooth, which is equivalent to that $\xi$ is smooth. Roughly, we will apply the infinitesimal criterion for smoothness.

 To show that $\xi$ is smooth, since both $X$ and $BT^{G,\mu}_1$ are smooth over $\kappa$,
 it suffices to show the following statement: for any algebraically closed field $k|\ov{\F}_p$ and any closed point $x\in X(k)$ with image $y: \Spec\,k\ra BT^{G,\mu}_1$, the induced map between the tangent spaces \[T_x\xi: T_xX\ra T_yBT^{G,\mu}_1\] is surjective. Here $T_yBT^{G,\mu}_1$ is the tangent space of the stack $BT^{G,\mu}_1$ at $y$, which is by definition $\pi_0\tr{Fib}\Big(BT^{G,\mu}_1(k[\varepsilon])\ra BT^{G,\mu}_1(k), y\Big)$, the associated set of isomorphism classes of the groupoid of the fiber of $BT^{G,\mu}_1(k[\varepsilon])\ra BT^{G,\mu}_1(k)$ at $y$, see \cite{Sta} Tags 07WY and 07X1. (As usual, $k[\varepsilon]=k[t]/(t^2)$ is the ring of dual numbers over $k$.)
 This (generalized) criteria for smoothness using tangent spaces seems to be well known (for example see the proof of Theorem 4.5 in \cite{Lau}), which can be deduced easily using the more advanced version of smoothness criteria by cotangent complexes (cf. \cite{Khan} Proposition 8.7.1).
 
 Base changing the above diagram over $k$, we may assume $k=\ov{\F}_p$. Let $R_G=\wh{\Ol}_{X,x}$. Then we have $R_G\simeq \wh{U_-}$, the formal completion of the unipotent group $U_-$ along its identity section. The ring $R_G$ admits a description as a characteristic $p$ deformation space of $p$-divisible groups with crystalline Tate tensors $(s_\alpha)$, cf. \cite{Kis} subsections 1.5 and 2.3. One can interpret $\Spf\,R_G$ as a deformation space (in characteristic $p$) of prismatic $F$-gauges with $(G,\mu)$-structure by the recent work of Ito \cite{Ito}. On the other hand, for each $n\geq 1$, let $BT^{G,\mu}_n$ be the stack over $k$ of defined by Drinfeld in \cite{Dri23} Appendix C, and let $BT^{G,\mu}_\infty=\varprojlim_nBT^{G,\mu}_n$. By \cite{GM} Theorem A, for each $n$, $BT^{G,\mu}_n$ is a smooth Artin stack and the transition morphism $BT^{G,\mu}_{n+1}\ra BT^{G,\mu}_n$ is smooth. The Dieudonn\'e module with its induced crystalline Tate tensors attached to the point $x$ defines a $k$-point $z\in BT^{G,\mu}_\infty(k)$. Consider
 the associated local formal deformation space (over $k$) $\mathrm{Def}_z$ of  $BT^{G,\mu}_\infty$ at $z$. More precisely, for any complete Noetherian $k$-algebra $R$ with maximal ideal $\mathfrak{m}$ such that $k\simeq R/\mathfrak{m}$, $\mathrm{Def}_z(R)=\pi_0\tr{Fib}\Big(BT^{G,\mu}_\infty(R)\ra BT^{G,\mu}_\infty(k),z\Big)$.
 By the work of Ito \cite{Ito}, we have an isomorphism \[\Spf\,R_G\simeq \mathrm{Def}_z.\]  (For more details, see \cite{IKY} Proposition 3.32 and Theorem 3.6.) Hence we get an induced isomorphism between their tangent spaces $T\Spf\,R_G\simeq T\mathrm{Def}_z$ (cf. \cite{Sta} Tag 06IG).
 The map $T_x\xi: T_xX\ra T_yBT^{G,\mu}_1$ is then the composition of \[T_xX=T\Spf\,R_G\simeq T\mathrm{Def}_z\ra T_yBT^{G,\mu}_1,\] which is surjective since the natural map $T\mathrm{Def}_z\ra T_yBT^{G,\mu}_1$ is surjective by the smoothness of $BT^{G,\mu}_{n+1}\ra BT^{G,\mu}_n$, $\forall\,n\geq 1$.
 This finishes the proof.
 \end{proof}
 
 \begin{remark}
 	\begin{enumerate}
 \item The above map between the tangent spaces $T_xX\ra T_yBT^{G,\mu}_1$ is actually a bijection, as both sides are isomorphic to $\Lie\, U_-=\u_-$.
 \item As in Remark \ref{rmk log F-gauge}, over a smooth toroidal compactification $X^{\tr{tor}}$ of $X$, one can in fact show that there is a universal log de Rham $F$-gauge with $G$-structure over $X^{\tr{tor}}$, as a canonical extension of the one over $X$.
\item We leave it to the reader to extend Theorem \ref{thm univ F-gauge} to the abelian type case. For the version without compactification, see \cite{Lov, SZ} for some ideas.
\item See \cite{IKY} Theorem 3.34 for a similar (but more complicated) result in the mixed characteristic setting.
	\end{enumerate}
\end{remark} 
 By Theorem \ref{thm univ F-gauge} and the Tannakian definition of de Rham $F$-gauge with $G$-structure, we have a functor \[\Rep\,G\lra F\tr{-Gauge}_{dR}(X).\] Therefore, for any $\lambda \in X^\ast(T)_+$ such that $V_\lambda$ is defined over $\F_p$, we have an enrichment of the flat vector bundle $(\mathcal{V}_\lambda, \nabla)$  into a de Rham $F$-gauge
 \[(\mathcal{V}_\lambda, \nabla, C^\bullet, D_\bullet, \varphi).\]
 In the following, we will study the $F$-gauge (= $F$-zip) structure on the de Rham cohomology groups $H^i_{\dR}(X/k, (\V_\lambda,\nabla))$.

\subsection{Standard complexes in characteristic $p$}\label{subsection std complexes}
We need some group and representation theoretic preparations in this and the next subsection.
We work over $k=\ov{\F}_p$.
Let $\mathfrak{g}=\Lie\,G, \,\mathfrak{p}=\Lie\,P,\, \mathfrak{u}=\Lie\,U$ and $\mathfrak{u}_-=\Lie\,U_{-}$. For $W\in\Rep\,P$, the associated Verma module is the $U(\g)-P$-module
\[\Verm(W)=U(\g)\otimes_{U(\p)}W, \]
where $U(\g)$ acts canonically on the first component and $P$ acts canonically and diagonally on both components. 
By the Poincar\'e-Birkhoff-Witt theorem, we have a canonical isomorphism
\[ U(\g)\otimes_{U(\p)}W\simeq U(\u_-)\otimes_kW.\]
Since $\u_-$ is abelian, we have a canonical isomorphism
\[ U(\u_-)\simeq \Sym(\u_-),\]
which can be identified with a polynomial algebra over $k$ with variables given by any $k$-basis of $\u_-$. Thus we have \[\Verm(W)\simeq \Sym(\u_-)\otimes_k W.\]
In the following we will omit the subscript $k$ of $\otimes_k$ for tensor products of $k$-vector spaces.
For any integer $m\geq 0$, let $\Sym_{\leq m}(\u_-)$ be the sub $k$-algebra of $\Sym(\u_-)$ of elements of degree $\leq m$, which we identify with a sub $k$-algebra of $U(\u_-)$. Similarly we have $\Sym_{ m}(\u_-)$, the sub $k$-algebra of $\Sym(\u_-)$ of elements of degree $m$.
For $W_1, W_2\in \Rep\,P$, we have a natural isomorphism
\[\Hom_{U(\g)-P}(\Verm(W_1), \Verm(W_2))\simeq \Hom_{P}(W_1,\Verm(W_2)). \]
A morphism $f\in \Hom_{U(\g)-P}(\Verm(W_1), \Verm(W_2))$ is said of degree $m$ if the induced morphism \[f: W_1\ra \Verm(W_2)=\Sym(\u_-)\otimes W_2\] has image in $\Sym_m(\u_-)\otimes W_2$.

Let $f \in \Hom_{U(\g)-P}(\Verm(W_1), \Verm(W_2))$ be a morphism of degree $m$, which we view as a map
\[f: W_1\lra \Sym_{\leq m}(\u_-)\otimes W_2. \]
Passing to dual,  it induces a morphism
\[f^\vee: \Gamma_{\leq m}(\u_-^{\vee})\otimes W_2^\vee\lra W_1^\vee,  \]
where $\Gamma_{\leq m}(\u_-^{\vee})=\Gamma(\u_-^{\vee})/\Gamma_{>m}(\u_-^{\vee})$ is the quotient of the divided power algebra $\Gamma(\u_-^{\vee})$ associated to $\u_-^{\vee}$. By construction, $\Gamma(\u_-^{\vee})$ sits in a canonical perfect pairing
\[\Sym(\u_-)\times \Gamma(\u_-^{\vee})\lra k. \]
There is a degree-preserving canonical morphism
\[\Sym(\u_-^{\vee})\lra \Gamma(\u_-^{\vee}), \]
such that the induced morphism
\[ \Sym_{\leq m}(\u_-^{\vee})\lra  \Gamma_{\leq m}(\u_-^{\vee})\]
is an isomorphism for $m\leq p-1$. Later, we will mainly work with $m=1$ to produce differential operators of degree one, therefore we can use either $\Sym_{\leq 1}(\u_-^{\vee})$ or $\Gamma_{\leq 1}(\u_-^{\vee})$ without changing the construction.

Let $V\in \Rep\,G$ be a representation, which we consider as an object in $\Rep\,P$ by the natural restriction functor $\Rep\,G\ra \Rep\,P$. Let $\rho$ be the half of sum of positive roots.
Set $n=\dim_k\,\u_-$. Consider the standard complex of $U(\g)-P$-modules \[\tr{Std}_\bullet(V)=\Verm(\wedge^\bullet(\u_-)\otimes V)=\Sym(\u_-)\otimes\wedge^\bullet(\u_-)\otimes V,\] more explicitly
\[0\ra \Verm(\wedge^n(\u_-)\otimes V)\ra \Verm(\wedge^{n-1}(\u_-)\otimes V)\ra \cdots \ra \Verm(\u_-\otimes V)\ra \Verm(V)\ra 0, \]
where for $1\leq a\leq n$, the differentials
\[ d_a:  \Verm(\wedge^a(\u_-)\otimes V)\lra \Verm(\wedge^{a-1}(\u_-)\otimes V)\]
are given by
\[\begin{split}
d_a(u\otimes (x_1\wedge x_2\wedge\cdots  \wedge x_a )\otimes v)&=\sum_{i=1}^a(-1)^{i-1}(ux_i)\otimes (x_1\wedge x_2\wedge\cdots \wedge \wh{x_i}\wedge \cdots\wedge x_a)\otimes v\\
&+\sum_{i=1}^a(-1)^{i}u\otimes (x_1\wedge x_2\wedge\cdots \wedge \wh{x_i}\wedge\cdots \wedge x_a)\otimes( x_iv),
\end{split}\]
for all $u\in\Sym(\u_-),\, x_1,\dots, x_a\in \u_-$ and $v\in V$.

Recall that $P=P_\mu$ for a cocharacter $\mu: \G_m\ra G$ defined over $\kappa$, as in the setting of subsection \ref{subsection F-zip}. For $V$ as above, we have a natural decreasing Hodge filtration $C^\bullet$ which is defined as usual by the weights of $\G_m$ via $\mu$: for $a\in\Z$, \[C^a(V)=\bigoplus_{b\geq a} V^b,\] where for each $b\in\Z$, $V^b\subset V$ is the subspace of weight $b$ by the induced action of $\G_m$. It follows that each graded piece $\gr^a_{C}V$ forms a representation of $L$, the Levi subgroup associated to $P$. As any element of $\u_-$ has weight $-1$, we have 
\[d(\Sym(\u_-)\otimes\wedge^j(\u_-)\otimes C^{a-j}(V))\subset \Sym(\u_-)\otimes\wedge^{j-1}(\u_-)\otimes C^{a-j+1}(V).\]
We get a decreasing Hodge filtration $C^\bullet$ on $\tr{Std}_\bullet(V)$ by
\[C^a(\tr{Std}_\bullet(V)):=\Sym(\u_-)\otimes \wedge^{\bullet}(\u_-)\otimes C^{a-\bullet}(V),\quad a\in\Z. \]
The associated graded complex has the form
\[\gr_{C}^a(\tr{Std}_\bullet(V)) = \Sym(\u_-)\otimes \wedge^{\bullet}(\u_-)\otimes \gr_{C}^{a-\bullet}(V),\quad a\in\Z.\]


Now consider the Frobenius twisted parabolic $P^{(p)}\subset G_{\kappa}$ and its opposite parabolic $P^{(p)}_-$. The Lie algebra of the unipotent radical of $P^{(p)}_-$ is $\u_-^{(p)}$. As above, we can consider the category of $U(\g)-P^{(p)}$-modules.
 For any $V\in \Rep\,G$, we repeat the above definition of standard complex $\tr{Std}_\bullet(V)$ to define a complex of $U(\g)-P^{(p)}$-modules.
 \begin{definition}
 	For any $V\in \Rep\,G$, let
   $p\tr{-Std}_\bullet(V)$ be the following complex
\[0\ra \Sym(\u_-^{(p)})\otimes\wedge^n(\u_-^{(p)})\otimes V\ra \cdots \ra \Sym(\u_-^{(p)})\otimes\u_-^{(p)}\otimes V\ra \Sym(\u_-^{(p)})\otimes V\ra 0, \]
where for $1\leq a\leq n$, the differentials
\[ \psi_a:  \Sym(\u_-^{(p)})\otimes\wedge^a(\u_-^{(p)})\otimes V\lra \Sym(\u_-^{(p)})\otimes\wedge^{a-1}(\u_-^{(p)})\otimes V\]
are given by \[\begin{split}\psi_a(u\otimes (x_1\wedge x_2\wedge\cdots   \wedge x_a)\otimes v)&=\sum_{i=1}^a(-1)^{i-1}(ux_i)\otimes (x_1\wedge x_2\wedge\cdots \wedge \wh{x_i}\wedge \cdots\wedge x_a)\otimes v\\
&+\sum_{i=1}^a(-1)^{i}u\otimes (x_1\wedge x_2\wedge\cdots \wedge \wh{x_i}\wedge\cdots \wedge x_a)\otimes( x_iv),
\end{split}\] 
for all $u\in \Sym(\u_-^{(p)}),\, x_1,\dots, x_a\in \u_-^{(p)}$ and $v\in V$.
\end{definition}

Write $P_-^{(p)}=P_\nu$. We can define similarly an increasing conjugate filtration $D_\bullet$ on $V$ by weights of $\G_m$-action via $\nu: \G_m\ra G_\kappa\simeq G_\kappa^{(p)}$: \[D_a(V)=\bigoplus_{b\leq a}V^b.\] Note that $\nu=\mu^{\sigma,-1}$ with $\sigma=Id_G\times Fr_k: G_k\ra G_k$ the Frobenius of $G$ over $k$. In particular, if $V$ is defined over $\F_p$,  we get an $F$-zip  \[(V, C^\bullet, D_\bullet, \varphi)\] using the isomorphisms $V^{(p)}\simeq V$ and $G^{(p)}\simeq G$. 
Recall here $\varphi$ is given by isomorphisms
\[\varphi_a: Fr_k^\ast\gr^a_{C}V=(\gr^a_{C}V)^{(p)} \st{\sim}{\lra}\gr^a_{D}V, \quad a\in\Z.\]
Now, as any element of $\u_-^{(p)}$ has weight 1, we have
\[\psi_a(\Sym(\u_-^{(p)})\otimes\wedge^a(\u_-^{(p)})\otimes D_i(V) )\subset \Sym(\u_-^{(p)})\otimes\wedge^{a-1}(\u_-^{(p)})\otimes D_{i+1}(V). \]
Thus we can define an increasing conjugate filtration $D_\bullet$ on $p\tr{-Std}_\bullet(V)$ by
\[D_a(p\tr{-Std}_\bullet(V))=\Sym(\u_-^{(p)})\otimes \wedge^\bullet(\u_-^{(p)})\otimes D_{a-\bullet}(V), \quad a\in\Z. \]
The associated graded complex has the form
\[ \gr_{D}^a(p\tr{-Std}_\bullet(V)) = \Sym(\u_-^{(p)})\otimes \wedge^{\bullet}(\u_-^{(p)})\otimes \gr_{D}^{a-\bullet}(V),\quad a\in\Z.\] 
\begin{lemma}\label{lemma grad}
Let $V$ be a representation of $G$ defined over $\F_p$. 
The isomorphisms $\varphi_a: (\gr^a_{C}V)^{(p)} \st{\sim}{\lra}\gr^a_{D}V$ extend to isomorphisms
\[\varphi_a: (\gr_{C}^a(\tr{Std}_\bullet(V)))^{(p)}\simeq  \gr_{D}^a (p\tr{-Std}_\bullet(V)). \]
\end{lemma}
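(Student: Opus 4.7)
The plan is to compute the induced differentials on both graded complexes explicitly, observe that they both simplify to the same algebraic operation (the $\u_-$-action on the graded representation, twisted appropriately), and then deduce the extension and compatibility of $\varphi_a$ from the $G$-equivariance built into the $F$-zip structure on $V$.

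First I will identify the graded differentials. The differential on $\tr{Std}_\bullet(V)$ splits as a sum of a ``Koszul'' contribution $d^{\mathrm{K}}(u\otimes\omega\otimes v)=\sum_i(-1)^{i-1}(ux_i)\otimes\wh\omega\otimes v$ and a ``Lie'' contribution $d^{\mathrm{L}}(u\otimes\omega\otimes v)=\sum_i(-1)^i u\otimes\wh\omega\otimes(x_iv)$. For $v\in C^{a-j}(V)$ at position $j$, the Koszul part leaves the $V$-factor in $C^{a-j}(V)$ and therefore lies only in the filtration $C^{a-1}$ at position $j-1$, so it vanishes after projecting to $\gr_C^a$. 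The Lie part, by the weight convention on $\u_-$ already used in the paper, sends $v$ to $x_iv\in C^{a-j+1}(V)$, and so descends to a well-defined map on $\gr_C^a$ of the form
\[\bar d(u\otimes\omega\otimes\bar v)\;=\;\sum_i(-1)^i\,u\otimes\wh\omega\otimes\overline{x_i v},\]
where $\overline{x_iv}\in\gr_C^{a-j+1}(V)$ is induced by the $L$-equivariant action $\u_-\otimes\gr_CV\to\gr_CV$. An entirely parallel analysis for $p\tr{-Std}_\bullet(V)$ shows that its graded differential reduces to
\[\bar\psi(u\otimes\omega\otimes\bar v)\;=\;\sum_i(-1)^i\,u\otimes\wh\omega\otimes\overline{x_iv},\]
with $\overline{x_iv}\in\gr_D^{a-j+1}(V)$ induced by the $L^{(p)}$-equivariant action of $\u_-^{(p)}$ on $\gr_D V$.

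Given this, I define the extension of $\varphi_a$ position-by-position as
\[\mathrm{id}_{\Sym(\u_-^{(p)})}\otimes\mathrm{id}_{\wedge^j(\u_-^{(p)})}\otimes\varphi_{a-j},\]
which is an isomorphism on each term since every $\varphi_{a-j}$ is. Compatibility with $\bar d^{(p)}$ and $\bar\psi$ reduces, after unwinding the explicit formulas, to the single identity
\[\varphi_{a-j+1}\bigl((x\cdot v)^{(p)}\bigr)\;=\;x^{(p)}\cdot\varphi_{a-j}(v^{(p)}),\qquad x\in\u_-,\ \bar v\in\gr_C^{a-j}(V),\]
i.e.\ to the statement that $\varphi$ intertwines the $\u_-$- and $\u_-^{(p)}$-actions on the respective graded pieces of $V$. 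This in turn follows from the Tannakian formalism of subsection \ref{subsection F-zip}: the pairing $\u_-\otimes V\to V$ (coming from $\u_-\hookrightarrow\g$) is $G$-equivariant, hence a morphism in $\Rep G$, so it is taken by the exact tensor functor $\Rep G\to F\tr{-Zip}(\Spec\,k)$ associated to the trivial $G$-zip of type $\mu$ on $V$ to a morphism of $F$-zips, whose $\varphi$-component is exactly the claimed compatibility.

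The main obstacle I anticipate is spelling out this last $G$-equivariance argument: one must verify that the $\varphi_a$ appearing in the statement are indeed the ones produced by the Tannakian tensor functor applied to $V$, so that they automatically commute with the image of every $G$-morphism, including the Lie-algebra pairing $\u_-\otimes V\to V$. Once this is granted, the lemma follows formally from Step~1 and the term-by-term definition of the extension.
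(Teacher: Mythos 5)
Your proposed route --- identify the graded differentials, observe that they reduce to the $\u_-$-action, extend $\varphi_a$ term-by-term, and then verify the single intertwining identity --- is exactly what ``one can check'' must unwind to, so the strategy is sound and usefully more explicit than the paper's one-line proof. However, there is a genuine error in the final step and a direction slip in the first step that you should address.

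The substantive problem is the claim that the pairing $\u_-\otimes V\to V$ ``is $G$-equivariant, hence a morphism in $\Rep G$.'' It is not: $\u_-$ is only a $P_-$-stable (hence $L$-stable) subspace of $\g$, not a $G$-subrepresentation, so $\u_-\otimes V\to V$ is only $L$-equivariant. The argument can be repaired in two ways, and you should pick one explicitly. Either (a) apply the Tannakian functor to the genuine $G$-morphism $\g\otimes V\to V$, which does produce a morphism of $F$-zips, and then restrict to the weight $-1$ graded piece $\u_-\subset\gr_C\g$ (respectively $\u_-^{(p)}\subset\gr_D\g$) to extract the identity $\varphi_{a-j+1}((x\cdot v)^{(p)})=x^{(p)}\cdot\varphi_{a-j}(v^{(p)})$; or (b) avoid Tannakian considerations entirely by noting that over $k$, after the canonical identification $V^{(p)}\cong V$ coming from the $\F_p$-structure, the graded isomorphism $\varphi_a$ is simply the identity on the $\mu^{\sigma}$-weight-$a$ subspace, and the action map $\u_-\otimes V\to V$ is defined over the field of definition, so the identity is immediate from functoriality of Frobenius twist. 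Either repair is short, but as written the step does not hold.

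A smaller point: your justification for dropping the Koszul contribution from the graded differential is phrased backwards. You write that the Koszul part ``lies only in the filtration $C^{a-1}$ at position $j-1$, so it vanishes after projecting to $\gr_C^a$.'' Landing in $C^{a-1}$ but not $C^a$ would mean the differential fails to preserve the filtration, not that it vanishes; the correct statement is that the Koszul part lands one step \emph{deeper}, in $C^{a+1}$, and therefore dies in $\gr_C^a=C^a/C^{a+1}$ (the weight bookkeeping works out once one accounts consistently for the shift against the $\wedge^\bullet\u_-$ degree). The conclusion --- only the Lie part survives in the graded complex --- is correct, and indeed matches what happens for the Hodge-filtered de Rham complex, but the intermediate inequality should be stated with the filtration degree increasing rather than decreasing.
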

\begin{proof}
By the above description, the corresponding terms of the two graded complexes on both sides are isomorphic. One can check that the differentials on both sides are also isomorphic under $\varphi_a$.
\end{proof}

\subsection{BGG complexes in characteristic $p$}\label{subsection BGG}
The construction of BGG complexes in mixed characteristic is due to Polo-Tiluilne \cite{PT} and Lan-Polo \cite{LP}. We review the construction in the Hodge type setting, with emphasis on characteristic $p$. We remark that although the main text of \cite{LP} is restricted to the PEL setting, the construction of BGG complexes works more generally, which can be seen from the proof of Theorem 5.2 in \cite{LP} (improving the construction of \cite{PT} in the setting of split reductive groups with simply connected derived group). 

Keep the notations as above.
Let $\Phi$ be the set of roots of $G$ and $\Phi^+$ the set of positive roots corresponding to the choice of the Borel subgroup $B$. We recall a crucial notion for modular representations.
A weight $\lambda\in X^\ast(T)_+$ is called $p$-small, if \[\lan \lambda+\rho, \alpha^\vee\ran \leq p, \quad \forall\,\alpha\in \Phi,\] where as before $\rho=\frac{1}{2}\sum_{\alpha\in \Phi^+}\alpha$.
Equivalently, $\lambda\in X^\ast(T)_+$ is $p$-small if \[\lambda\in X^\ast(T)_+ \cap \ov{C}_p,\] where $\ov{C}_p$ is the closure of the fundamental $p$-alcove
\[\ov{C}_p=\{\lambda\in X^\ast(T)_\R\,|\,0\leq \lan \lambda+\rho, \alpha^\vee\ran \leq p,\,\forall\,\alpha\in \Phi^+\} \]
for the dot action of the affine Weyl group $\mathbf{W}_{\tr{aff}}\simeq p\Z\Phi\rtimes W$ on $X^\ast(T)_\R$ (see \cite{PT} subsection 1.9 and the references therein).
Here the dot action is given by $w\cdot\lambda=w(\lambda+\rho)-\rho$. 
Consider the Coxeter number of $G$
\[h:=1+\tr{Max}\{\lan \rho, \alpha^\vee\ran, \alpha\in \Phi^+\}.\]  Then one checks easily that \[X^\ast(T)_+ \cap \ov{C}_p\neq\emptyset \quad \Longleftrightarrow \quad p\geq h-1.\]

Recall the subset ${}^JW$ of the Weyl group $W$ associated to $P$ introduced in subsection \ref{subsection moduli dR gauges}. The length function induces a surjection
\[\ell: {}^JW\lra \{0,1,\dots, n\},\quad w\mapsto \ell(w).\]
For each $0\leq a\leq n$, let ${}^JW(a)\subset {}^JW$ be the subset of elements of length $a$. If $\lambda\in X^\ast(T)_+$ and $w\in {}^JW$, then $w\cdot \lambda \in X^\ast(T)_{L,+}$.
\begin{theorem}\label{thm BGG}
	Assume that $\lambda$ is $p$-small. There exists a $C^\bullet$-filtered complex $\tr{BGG}_\bullet(V_\lambda)$ of $U(\g)-P$-modules 
		\[0\ra \tr{BGG}_n(V_\lambda)\ra \cdots \ra \tr{BGG}_1(V_\lambda)\ra \tr{BGG}_0(V_\lambda)\ra 0, \] where 
		\[\tr{BGG}_a(V_\lambda)=\bigoplus_{w\in {}^JW(a)}\Verm(W_{w\cdot\lambda}), \quad 0\leq a\leq n. \]
		Moreover, there is a $C^\bullet$-filtered quasi-isomorphic embedding
		\[ \tr{BGG}_\bullet(V_\lambda) \hookrightarrow \tr{Std}_\bullet(V_\lambda),\] such that the induced map of graded complexes \[\gr_{C}  \tr{BGG}_\bullet(V_\lambda) \hookrightarrow  \gr_{C} \tr{Std}_\bullet(V_\lambda)\] is a quasi-isomorphic direct summand of degree zero\footnote{This means that both the embedding and the splitting morphisms (defining the summand) are defined by direct sums of morphisms of $U(\g)-P$-modules of degree zero, see \cite{LP} Definition 5.1} with trivial differentials.
\end{theorem}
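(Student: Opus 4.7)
The plan is to follow the strategy of Polo-Tilouine \cite{PT} and Lan-Polo \cite{LP}, which adapts the classical Bernstein-Gelfand-Gelfand construction to modular representations of reductive groups. The key input is a characteristic $p$ analogue of Kostant's theorem: for $\lambda$ $p$-small, the $\u_-$-cohomology decomposes as
\[
H^a(\u_-, V_\lambda) \simeq \bigoplus_{w \in {}^JW(a)} W_{w\cdot\lambda}
\]
as representations of $L$. The $p$-smallness hypothesis is precisely what guarantees that no exotic composition factors appear and that the decomposition mirrors the characteristic zero case.

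First I would construct, for each $w \in {}^JW$, an $L$-equivariant direct summand embedding $W_{w\cdot\lambda} \hookrightarrow \wedge^{\ell(w)}(\u_-) \otimes V_\lambda$ lifting the Kostant decomposition, exploiting the semisimplicity of $L$-representations in the $p$-small range and the fact that $W_{w\cdot\lambda}$ appears with multiplicity one in the relevant weight subspace. Applying the Verma functor then produces embeddings $\Verm(W_{w\cdot\lambda}) \hookrightarrow \Verm(\wedge^{\ell(w)}(\u_-) \otimes V_\lambda) = \tr{Std}_{\ell(w)}(V_\lambda)$, whose direct sum over $w \in {}^JW(a)$ defines the $a$-th term $\tr{BGG}_a(V_\lambda)$ as a sub-$P$-module of $\tr{Std}_a(V_\lambda)$.

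Next I would verify that the standard differential $d_a$ preserves the BGG submodules, i.e.\ that $d_a(\tr{BGG}_a(V_\lambda)) \subset \tr{BGG}_{a-1}(V_\lambda)$. This is the heart of the construction and the main technical obstacle: one must show that the component of $d_a$ landing outside the BGG direct summand vanishes. The argument reduces to a weight-and-degree count using that the induced maps between $\Verm(W_{w\cdot\lambda})$ and $\Verm(W_{w'\cdot\lambda})$ are determined by their restriction to the highest-weight vectors, combined with the fact that $\Hom_{U(\g)-P}(\Verm(W_{w\cdot\lambda}), \Verm(W_{w'\cdot\lambda}))$ is one-dimensional of degree $\ell(w) - \ell(w')$ for adjacent $w, w'$ in the Bruhat order — again leveraging $p$-smallness so that the linkage principle and Jantzen's sum formula behave as in characteristic zero.

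Once $\tr{BGG}_\bullet(V_\lambda)$ is built as a subcomplex, the quasi-isomorphism $\tr{BGG}_\bullet(V_\lambda) \hookrightarrow \tr{Std}_\bullet(V_\lambda)$ follows by computing the cohomology of both complexes: $\tr{Std}_\bullet(V_\lambda)$ is a free resolution whose cohomology upon applying $k \otimes_{U(\u_-)} (-)$ computes $H^\bullet(\u_-, V_\lambda)$, matching the terms of $\tr{BGG}_\bullet(V_\lambda)$ by Kostant. For the filtered statement, I would equip $\tr{BGG}_\bullet(V_\lambda)$ with the filtration induced from $C^\bullet$ on $\tr{Std}_\bullet(V_\lambda)$ and observe that each summand $\Verm(W_{w\cdot\lambda})$ sits in a single weight: indeed $W_{w\cdot\lambda}$ has pure $\mu$-weight $w\cdot\lambda(H)$, so the filtration on $\tr{BGG}_\bullet$ splits as a grading, and on associated gradeds the differentials must vanish since any nonzero map would change the weight. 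The direct-summand claim on graded pieces is then immediate from the $L$-equivariant splitting constructed in the first step.
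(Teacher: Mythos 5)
Your plan diverges from the paper's (and Polo--Tilouine's and Lan--Polo's) in a way that creates a genuine gap. The paper's proof of this theorem (cited to \cite{LP} Theorem 5.2 and illustrated in detail in the parallel proof of Proposition~\ref{prop p-BGG}) \emph{defines} $\tr{BGG}_\bullet(V_\lambda)$ as the direct summand $\tr{Std}_\bullet(V_\lambda)_{\chi_\lambda}$ cut out by the Harish--Chandra central character $\chi_\lambda$ in the block decomposition $\tr{Std}_\bullet(V_\lambda)\simeq\bigoplus_j\tr{Std}_\bullet(V_\lambda)_{\chi_j}$. Because the Harish--Chandra part $U(\g)^G$ of the center commutes with the $U(\g)$-equivariant differentials of the standard complex, the $\chi_\lambda$-component is \emph{automatically} a subcomplex and a direct summand; the substance of the proof is then to \emph{identify} each term $\tr{Std}_a(V_\lambda)_{\chi_\lambda}$ with $\bigoplus_{w\in {}^JW(a)}\Verm(W_{w\cdot\lambda})$, which is done via a filtration argument using $p$-smallness, not a direct-sum decomposition.

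Your proposal instead tries to build the complex ``bottom-up'': you want $L$-equivariant direct-summand embeddings $W_{w\cdot\lambda}\hookrightarrow\wedge^{\ell(w)}(\u_-)\otimes V_\lambda$, and you appeal to ``semisimplicity of $L$-representations in the $p$-small range.'' This step fails in general: even when $\lambda$ is $p$-small, the $L$-module $\wedge^a(\u_-)\otimes V_\lambda\simeq\bigoplus_{w\in {}^JW(a)}(W_{w\cdot 0}\otimes V_\lambda)$ need not be semisimple, since tensor products of irreducible $L$-modules can fail to split in characteristic $p$. What actually holds (cf.\ \cite{PT} Lemma 2.3 and Lemma 1.11, as used in the proof of Proposition~\ref{prop p-BGG}) is only a \emph{filtration} of $W_{w\cdot 0}\otimes V_\lambda$ with $p$-small irreducible graded pieces; this filtration does \emph{not} split as $L$-modules, and the Verma summand $\Verm(W_{w\cdot\lambda})$ emerges as a direct summand only after applying $\Verm(\cdot)$ and projecting to the $\chi_\lambda$-block. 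So both of the problems you flag as the ``heart'' of the argument — the existence of the summand embedding and the preservation of the differential — are solved at once by the central character decomposition, and are not straightforwardly recoverable by a weight-and-degree count at the $L$-module level. Your Hom-space argument for differential preservation is likewise subsumed by the block decomposition: once $\tr{BGG}_\bullet$ is the $\chi_\lambda$-block, any $U(\g)$-equivariant map into a different block is zero for trivial reasons, with no need for one-dimensionality or Jantzen-filtration estimates. I'd recommend reorganizing the proof around the Harish--Chandra decomposition; as written, the proposal's crucial step rests on a semisimplicity claim that is false in general.
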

\begin{proof}
This is \cite{LP} Theorem 5.2, which generalizes the Theorem D of \cite{PT}. See also the proof of Proposition \ref{prop p-BGG} below.
\end{proof}

\begin{proposition}\label{prop p-BGG}
	Assume that $\lambda$ is $p$-small and $V_\lambda$ is defined over $\F_p$. There exists a $D_\bullet$-filtered complex $p\tr{-BGG}_\bullet(V_\lambda)$ of $U(\g)-P^{(p)}$-modules 
	\[0\ra p\tr{-BGG}_n(V_\lambda)\ra \cdots \ra p\tr{-BGG}_1(V_\lambda)\ra p\tr{-BGG}_0(V_\lambda)\ra 0, \] where 
	\[p\tr{-BGG}_a(V_\lambda)=\bigoplus_{w\in {}^JW(a)}\Sym(\u_-^{(p)})\otimes W_{w\cdot\lambda}^{(p)}, \quad 0\leq a\leq n. \]
Moreover,	there exists  a
		$D_\bullet$-filtered 
		quasi-isomorphic embedding
		\[ p\tr{-BGG}_\bullet(V_\lambda) \hookrightarrow p\tr{-Std}_\bullet(V_\lambda),\] such that the induced map of graded complexes \[\gr_{D}(p\tr{-BGG}_\bullet(V_\lambda)) \hookrightarrow \gr_{D}(p\tr{-Std}_\bullet(V_\lambda))\] is a quasi-isomorphic direct summand of degree zero with trivial differentials.
\end{proposition}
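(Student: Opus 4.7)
The plan is to obtain Proposition \ref{prop p-BGG} by applying the Frobenius twist functor $(-)^{(p)}$ to the $C^\bullet$-filtered BGG embedding provided by Theorem \ref{thm BGG}, and then identifying the result with the desired $D_\bullet$-filtered $p$-BGG embedding by using the $\F_p$-structure on $V_\lambda$ together with the relation between the cocharacters $\mu^{(p)}$ and $\nu=\mu^{\sigma,-1}$. Since $G$ is defined over $\F_p$ and $(-)^{(p)}$ is an exact tensor functor from $U(\g)$-$P$-modules to $U(\g)$-$P^{(p)}$-modules that preserves quasi-isomorphisms and direct summands, the quasi-isomorphic embedding of Theorem \ref{thm BGG} transports directly to a quasi-isomorphic embedding of $U(\g)$-$P^{(p)}$-complexes.

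First I would check that the terms match. The Frobenius twist sends $\Sym(\u_-)\otimes \wedge^\bullet(\u_-)\otimes V_\lambda$ to $\Sym(\u_-^{(p)})\otimes \wedge^\bullet(\u_-^{(p)})\otimes V_\lambda^{(p)}$, and using the canonical isomorphism $V_\lambda^{(p)}\simeq V_\lambda$ coming from the $\F_p$-structure, this is the underlying complex of $p\tr{-Std}_\bullet(V_\lambda)$; a direct inspection shows that the Frobenius-twisted version of $d_\bullet$ agrees with the explicit formula defining $\psi_\bullet$. On the BGG side, $(\tr{BGG}_\bullet(V_\lambda))^{(p)}=\bigoplus_w \Sym(\u_-^{(p)})\otimes W^{(p)}_{w\cdot\lambda}$ is by definition $p\tr{-BGG}_\bullet(V_\lambda)$.

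Next I would match the filtrations. The $C^\bullet$-filtration at position $j$ is $\Sym(\u_-)\otimes\wedge^j(\u_-)\otimes C^{a-j}(V_\lambda)$, so after Frobenius twist we get a filtration indexed by $\mu^{(p)}$-weights on the twisted complex. Since $V_\lambda$ is defined over $\F_p$, the relation $\nu=\mu^{\sigma,-1}$ identifies $\mu^{(p)}$-weights on $V_\lambda^{(p)}\simeq V_\lambda$ with the opposites of $\nu$-weights on $V_\lambda$; combined with the fact that $\u_-^{(p)}$ has $\nu$-weight $+1$ (as opposed to the $\mu$-weight $-1$ of $\u_-$), this converts the decreasing $C^\bullet$-filtered structure into the increasing $D_\bullet$-filtered structure of Proposition \ref{prop p-BGG} after reindexing $a\mapsto -a$. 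The conclusion that the associated graded embedding is a direct summand of degree zero with trivial differentials then follows immediately, since $(-)^{(p)}$ commutes with $\gr$ and preserves direct summands.

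The main technical obstacle I anticipate is the precise bookkeeping of indices in the filtration-matching step: one must verify that the sign flip between $\mu^{(p)}$-weights and $\nu$-weights on $V_\lambda$, combined with the reversal of the weight on $\u_-^{(p)}$ versus $\u_-$, assembles into exactly the shift $a-\bullet$ appearing in the definition of $D_a(p\tr{-Std}_\bullet(V_\lambda))$. Once this index-matching is in place, all remaining claims of Proposition \ref{prop p-BGG} follow formally from the exactness of the Frobenius twist functor.
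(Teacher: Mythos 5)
Your plan — apply $(-)^{(p)}$ to the $C^\bullet$-filtered embedding of Theorem \ref{thm BGG} and then re-index — captures the right idea for the underlying complexes and for the graded statement, and the paper itself gestures at this at the end of its proof ("Alternatively, this follows from Lemma \ref{lemma grad}..."). The Harish-Chandra central character $\chi_\lambda$ is $\F_p$-rational, so $p\tr{-BGG}_\bullet(V_\lambda)$ really is $(\tr{BGG}_\bullet(V_\lambda))^{(p)}$ as a complex of $U(\g)$-$P^{(p)}$-modules, and Lemma \ref{lemma grad} transports the graded direct-summand property. That much is fine.

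The gap is in the filtration-matching step, and it is more than a bookkeeping nuisance: the Frobenius twist of $C^\bullet$ on $\tr{Std}_\bullet(V_\lambda)$ is \emph{not} the conjugate filtration $D_\bullet$ on $p\tr{-Std}_\bullet(V_\lambda)$, and no global re-index $a\mapsto -a$ fixes this. Concretely, $C^a(\tr{Std}_j)=\Sym(\u_-)\otimes\wedge^j(\u_-)\otimes C^{a-j}(V_\lambda)$; after twist the $V_\lambda$-component carries the filtration $(C^{a-j}V_\lambda)^{(p)}$, which (using $\nu=\mu^{\sigma,-1}$) is the $\nu$-weight $\leq j-a$ part of $V_\lambda$, i.e.\ $D_{j-a}(V_\lambda)$. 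But the term of the conjugate filtration $D_c(p\tr{-Std}_j)$ is $\Sym(\u_-^{(p)})\otimes\wedge^j(\u_-^{(p)})\otimes D_{c-j}(V_\lambda)$. Equating $j-a$ with $c-j$ forces $c=2j-a$, which depends on the complex degree $j$, so the two filtrations on $p\tr{-Std}_\bullet$ are genuinely different (one collects $\nu$-weights from above, the other from below; they are opposed, not equal up to shift). Your claim that the weight-flips "assemble into exactly the shift $a-\bullet$" is precisely the point that fails, so the filtered embedding does not transport formally by exactness of $(-)^{(p)}$.

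What is true — and what the paper verifies — is that the Frobenius twist of the filtration restricted to the BGG subcomplex coincides with $D_\bullet(p\tr{-BGG})$ and maps into $D_\bullet(p\tr{-Std})$. This is a nontrivial inclusion, not an equality of filtrations: one checks that the generator $W_{w\cdot\lambda}^{(p)}$ sits inside $\Sym(\u_-^{(p)})\otimes\wedge^{\ell(w)}(\u_-^{(p)})\otimes V_\lambda$ with $V_\lambda$-component of $\nu$-weight $\leq -w\cdot\lambda(H)-\ell(w)$, so it lies in $D_a(p\tr{-Std}_{\ell(w)})$ exactly when $w\cdot\lambda(H)\geq -a$. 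The paper therefore defines $D_a(p\tr{-BGG})=\bigoplus_{w\cdot\lambda(H)\geq -a}\Sym(\u_-^{(p)})\otimes W_{w\cdot\lambda}^{(p)}$ directly and verifies this compatibility by hand, rather than trying to match filtrations on the ambient standard complex via Frobenius twist. To make your proof complete, you need to add this weight-checking step; it does not follow from exactness.
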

\begin{proof}
The proof is similar to that of \cite{LP} Theorem 5.2. For the reader's convenience, we review the proof here. Firstly, by assumption $X^\ast(T)_+\cap \ov{C}_p\neq\emptyset$, $0\in X^\ast(T)_+$ is $p$-small, arguing as \cite{PT} 4.4 we have 
\[\wedge^a(\u_-)\simeq \bigoplus_{w\in {}^JW(a)} W_{w\cdot 0}\] as $P$-modules, and hence \[\wedge^a(\u_-^{(p)})\simeq \bigoplus_{w\in {}^JW(a)} W_{w\cdot 0}^{(p)}\] as $P^{(p)}$-modules.
Since $V_0=k$ is defined over $\F_p$, we have natural isomorphism $V_0\simeq V_0^{(p)}$ and $p\tr{-Std}_\bullet(V_0)\simeq (\tr{Std}_\bullet(V_0))^{(p)}$ as complexes of $U(\g)-P^{(p)}$-modules. More explicitly, for any $a\in\Z$,
\[p\tr{-Std}_a(V_0)\simeq \bigoplus_{w\in {}^JW(a)} \Sym(\u_-^{(p)})\otimes W_{w\cdot 0}^{(p)}. \]

Consider a general $\lambda\in X^\ast(T)_+\cap \ov{C}_p\neq\emptyset$ as in the statement of the proposition. Since $V_\lambda$ is defined over $\F_p$, we have $V_\lambda^{(p)}\simeq V_\lambda$. From the isomorphism (cf. \cite{PT} 4.5) $\tr{Std}_a(V_\lambda)\simeq \tr{Std}_a(V_0)\otimes V_\lambda$, we get \[ p\tr{-Std}_a(V_\lambda)\simeq \bigoplus_{w\in {}^JW(a)}\Sym(\u_-^{(p)})\otimes W_{w\cdot 0}^{(p)}\otimes V_\lambda. \]
Denote $p\tr{-Std}_w(V_\lambda)=\Sym(\u_-^{(p)})\otimes W_{w\cdot 0}^{(p)}\otimes V_\lambda$, so that $p\tr{-Std}_\bullet(V_\lambda)\simeq \bigoplus_{w\in {}^JW}p\tr{-Std}_w(V_\lambda)$.
Arguing as \cite{PT} 2.7,
we have a decomposition of subcomplexes of $U(\g)-P^{(p)}$-modules
\[p\tr{-Std}_\bullet(V_\lambda)\simeq\bigoplus_{j\in J}p\tr{-Std}_\bullet(V_\lambda)_{\chi_j}, \]
where $J$ is some finite set, such that the Harish-Chandra part $U(\g)^G\subset Z(\g)$ of the center\footnote{In characteristic $p$, there is also a Frobenius part of the center $Z(\g)$, cf. \cite{Jant} C.4 or \cite{BMR} 3.1.6 and the references therein.} $Z(\g)\subset U(\g)$ acts on $p\tr{-Std}_\bullet(V_\lambda)_{\chi_j}$ by a distinct character $\chi_j^{(p)}$. There is a distinguished character $\chi_\lambda$, which is the unique  character of $U(\g)^G$ which acts non-trivially on $V_\lambda$. Now define 
\[p\tr{-BGG}_\bullet(V_\lambda)= p\tr{-Std}_\bullet(V_\lambda)_{\chi_\lambda}=\bigoplus_{w\in {}^JW}p\tr{-Std}_w(V_\lambda)_{\chi_\lambda},\]
where $p\tr{-Std}_w(V_\lambda)_{\chi_\lambda}=(\Sym(\u_-^{(p)})\otimes W_{w\cdot 0}^{(p)}\otimes V_\lambda)_{\chi_\lambda}$. Then we need to compute each term $p\tr{-Std}_w(V_\lambda)_{\chi_\lambda}$. 

As $\lambda$ is $p$-small, by \cite{PT} Lemma 2.3 all weights of \[\wedge^\bullet(\u_-^{(p)})\otimes V_\lambda=\bigoplus_{w\in {}^JW} (W_{w\cdot 0}^{(p)}\otimes V_\lambda)\] are $p$-small.  For each $w$,
by \cite{PT} Lemma 2.3 and Lemma 1.11,
 there exists a finite filtration on the $P^{(p)}$-module $W_{w\cdot 0}^{(p)}\otimes V_\lambda$ such that the graded pieces are of the form $W_\eta$ for some $\eta\in X^\ast(T)_{L^{(p)},+}\cap \ov{C}_p$.  By the same argument applied to the $P$-module $W_{w\cdot 0}\otimes V_\lambda$, we get $W_\nu$ for some $\nu \in X^\ast(T)_{L,+}\cap \ov{C}_p$. Since \[(W_{w\cdot 0}\otimes V_\lambda)^{(p)}\simeq W_{w\cdot 0}^{(p)}\otimes V_\lambda^{(p)}\simeq W_{w\cdot 0}^{(p)}\otimes V_\lambda,\] by the uniqueness in the statement of \cite{PT} Lemma 1.11, we have \[\eta\simeq \nu^{(p)} \quad \tr{and}\quad W_\eta\simeq W_\nu^{(p)}.\] Then we get also a finite filtration on $p\tr{-Std}_w(V_\lambda)$ by $U(\g)-P^{(p)}$-modules, with graded pieces $\Sym(\u_-^{(p)})\otimes W_\nu^{(p)}$ for those $\nu$ as above. For the direct summand $p\tr{-Std}_w(V_\lambda)_{\chi_\lambda}$, we have a similar finite filtration. For each $\nu$ as above, by \cite{PT} 2.7 and 2.8, $\Sym(\u_-^{(p)})\otimes W_\nu^{(p)}$ appears as a graded piece of the filtration on $p\tr{-Std}_w(V_\lambda)_{\chi_\lambda}=\Sym(\u_-^{(p)})\otimes W_{w\cdot 0}^{(p)}\otimes V_\lambda$ if and only if $W_\nu$ appears as a graded piece of the filtration on $W_{w\cdot 0}\otimes V_\lambda$ and $\nu=w'\cdot \lambda$ for some $w'\in {}^JW$. If these condition hold, by \cite{PT} 4.5 we have exactly $w'=w$ with multiplicity one. Therefore, we have \[p\tr{-Std}_w(V_\lambda)_{\chi_\lambda}=\Sym(\u_-^{(p)})\otimes W_{w\cdot\lambda}^{(p)},\]  and
\[p\tr{-BGG}_a(V_\lambda)=\bigoplus_{w\in {}^JW(a)}\Sym(\u_-^{(p)})\otimes W_{w\cdot\lambda}^{(p)}, \quad 0\leq a\leq n. \]

Let $H\in X_\ast(T)$ be the element defined by the conjugacy class of $\mu$. We define a $D_\bullet$-filtration on $p\tr{-BGG}_\bullet(V_\lambda)$ as 
\[D_a(p\tr{-BGG}_\bullet(V_\lambda))=\bigoplus_{w\in {}^JW\atop w\cdot\lambda(H)\geq -a}\Sym(\u_-^{(p)})\otimes W_{w\cdot\lambda}^{(p)}.\]
Then the
quasi-isomorphic embedding
\[ p\tr{-BGG}_\bullet(V_\lambda) \hookrightarrow p\tr{-Std}_\bullet(V_\lambda)\]  is compatible with the $D_\bullet$-filtrations on both sides, and
the induced map of graded complexes \[\gr_{D}(p\tr{-BGG}_\bullet(V_\lambda)) \hookrightarrow \gr_{D}(p\tr{-Std}_\bullet(V_\lambda))\] is a quasi-isomorphic direct summand of degree zero with trivial differentials, by the same arguments as the last paragraph of the proof of Theorem 5.2 in \cite{LP}. Alternatively, this follows from Lemma \ref{lemma grad} and similar statements in Theorem \ref{thm BGG}.
\end{proof}

\subsection{De Rham complexes and $p$-curvature complexes}
We transfer the results of subsection \ref{subsection std complexes} to geometric setting. In the rest of this section we assume that $X$ is of PEL type. For simplicity, assume moreover that the reductive group defined by the PEL datum is connected (i.e. we exclude the case with local factors of groups of type D) and $X=X^{\tr{tor}}$ is proper\footnote{All the results hold in general PEL type case, by more carefully working with smooth toroidal compactifications, canonical or subcanonical extensions of automorphic vector bundles, connections with log poles, and certain Galois orbit $[\lambda]$ in case with local factors of type D, cf. \cite{LP}}.

For any $\lambda\in X^\ast(T)_+$, we get the representation $V_\lambda^\vee$ of $G$ with highest weight $-w_0(\lambda)$, and the associated flat vector bundle $(\mathcal{V}_\lambda^\vee, \nabla)$ over $X$. Here $w_0$ is the element of maximal length in the Weyl group.
We will assume that $V_\lambda$ is defined over $\F_p$.
Recall the de Rham complex $DR(\V_\lambda^\vee,\nabla)$ and the associated $p$-curvature complex $K(\V_\lambda^\vee,\psi)$. Consider the dual complex $\tr{Std}^\bullet(V_\lambda^\vee)$
\[ 0\ra \Verm(V_\lambda)^\vee\ra \Verm(\u_-\otimes V_\lambda)^\vee\ra \cdots\ra \Verm(\wedge^n(\u_-)\otimes V_\lambda)^\vee\ra 0 \]
and the dual complex $p\tr{-Std}^\bullet(V_\lambda^\vee)$
\[\begin{split}
0\ra \Sym(\u_-^{(p)})^\vee\otimes V_\lambda^\vee\ra \Sym(\u_-^{(p)})^\vee\otimes &(\u_-^{(p)})^\vee\otimes V_\lambda^\vee\ra \cdots\\ &\ra \Sym(\u_-^{(p)})^\vee\otimes(\wedge^n\u_-^{(p)})^\vee\otimes V_\lambda^\vee\ra  0
\end{split}\]
of $\tr{Std}_\bullet(V_\lambda)$ and $p\tr{-Std}_\bullet(V_\lambda)$ respectively. The filtrations $C^\bullet$ and $D_\bullet$ on $V_\lambda^\vee$ naturally induce corresponding filtrations on $\tr{Std}^\bullet(V_\lambda^\vee)$ and $p\tr{-Std}^\bullet(V_\lambda^\vee)$.

Let us make the differentials in $\tr{Std}^\bullet(V_\lambda^\vee)$ more explicit. Consider a map \[f\in \Hom_{U(\g)-P}(\Verm(W_1), \Verm(W_2))\simeq \Hom_{P}(W_1,\Verm(W_2))\] of degree one, i.e. it is given by $f: W_1\ra \Sym_{\leq 1}(\u_-)\otimes W_2$. Its dual is then $f^\vee: \Sym_{\leq 1}(\u_-^{\vee})\otimes W_2^\vee\ra W_1^\vee$. Applying to the differentials in $\tr{Std}_\bullet(V_\lambda)$ (which are all of degree one), in particular \[d_1: \Sym(\u_-)\otimes \u_-\otimes V_\lambda\ra  \Sym(\u_-)\otimes V_\lambda\] is given by
\[d_1: \u_-\otimes V_\lambda\ra \Sym_{\leq 1}(\u_-)\otimes  V_\lambda,\] which induces \[d_1^\vee: \Sym_{\leq 1}(\u_-^{\vee})\otimes V_\lambda^\vee\ra V_\lambda^\vee\otimes\u_-^{\vee}.\] This map can be explicitly described as follows. For any $k$-basis $y_1,\dots, y_n$ of $\u_-$ and dual basis $f_1,\dots, f_n$ of $\u_-^{\vee}$, for any $(c,e)\in \Sym_{\leq 1}(\u_-^\vee)=k\oplus \u_-^\vee$ and $v\in V_\lambda^\vee$, consider the map
\[d_1^\vee((c,e)\otimes v)= v\otimes e + \sum_{j=1}^n (y_jv)\otimes(cf_j). \]
Recall the functor $\E(\cdot): \Rep\,P\ra \Vect(X)$ in subsection \ref{subsection auto vb}.
By \cite{LP} Lemma 4.18, we have \[\E(\Sym_{\leq 1}(\u_-^{\vee}))\simeq \P^1_X,\] and by loc. cit. Proposition 4.27, under the functor $\E(\cdot)$ the differential $d_1^\vee$ is sent to the morphism
\[s^\ast-\tr{Id}^\ast: \P_X^1\otimes\V_\lambda^\vee \ra \V_\lambda^\vee\otimes\Omega_X^1 \]
(cf. \cite{LP} subsection 2.2 for the precise meaning of the notation $s^\ast-\tr{Id}^\ast$). Composed with the canonical morphism $ \V_\lambda^\vee\ra \P_X^1\otimes\V_\lambda^\vee$, this gives
the connection \[\nabla: \V_\lambda^\vee\ra \V_\lambda^\vee\otimes \Omega_X^1\] on the automorphic vector bundle $\V_\lambda^\vee$.
By our notation, we have
\[\nabla=(s^\ast-\tr{Id}^\ast)\circ pr_2^\ast=\varepsilon_1(pr_2^\ast)-pr_1^\ast,\] where $pr_i: P_X^1=\Spec\,\P_X^1\ra X$ are the natural projections. Recall $\P_X^1=\P_X/J_X^{[2]}$ and we can identify $\Omega_X^1=J_X/J_X^{[2]}$.
We have similar descriptions for all $d_a^\vee$ and $\E(d_a^\vee)$. \begin{proposition}[\cite{LP} Corollary 4.30 and Proposition 4.31]
Under the functor $\E(\cdot)$, we have 
\[DR(\V_\lambda^\vee,\nabla)\simeq\E(\tr{Std}^\bullet(V_\lambda^\vee)).\] Moreover, the functor $\E(\cdot)$ transfers  the Hodge filtration $C^\bullet$ on  $\tr{Std}^\bullet(V_\lambda^\vee)$ to the Hodge filtration on $DR(\V_\lambda^\vee,\nabla)$.
\end{proposition}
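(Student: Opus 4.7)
The plan is to work term-by-term and differential-by-differential, using the translation between $P$-representations and automorphic vector bundles on $X$ provided by the $G$-torsor $\tau: \wt{X} \to X$. First I would identify the terms: the key observation is that $\u_-$, viewed as the tangent space to $G/P$ at the identity coset, satisfies $\E(\u_-^\vee)\simeq \Omega_X^1$, hence $\E(\wedge^a \u_-^\vee) \simeq \Omega_X^a$. Combined with $\E(V_\lambda^\vee) = \V_\lambda^\vee$ and the fact that the differentials in $\tr{Std}^\bullet(V_\lambda^\vee)$ are of degree one, applying $\E(\cdot)$ in degree $a$ produces a sheaf isomorphic to $\V_\lambda^\vee \otimes \Omega_X^a$, matching the $a$-th term of $DR(\V_\lambda^\vee,\nabla)$.

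Next I would verify that the differentials match. Since each $d_a$ has degree one, $d_a^\vee$ factors through $\Sym_{\leq 1}(\u_-^\vee)$, and by Lan--Polo Lemma 4.18 one has $\E(\Sym_{\leq 1}(\u_-^\vee)) \simeq \P_X^1$. For $a=1$, applying $\E(\cdot)$ to the explicit formula for $d_1^\vee$ recalled in the excerpt yields the morphism $s^\ast - \tr{Id}^\ast: \P_X^1 \otimes \V_\lambda^\vee \to \V_\lambda^\vee \otimes \Omega_X^1$, which by the PD stratification description of flat connections in subsection \ref{subsect PD diff} (cf.\ Theorem \ref{thm flat connection}) is precisely $\nabla$ after composing with the canonical section $\V_\lambda^\vee \to \P_X^1 \otimes \V_\lambda^\vee$. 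For general $a$, the map $d_a$ has the familiar Koszul shape with two summands: a ``$\u_-$-action on $V$'' piece and a ``differential on $\wedge^\bullet \u_-$'' piece. Under $\E(\cdot)$ these translate into $\nabla(e)\wedge\omega$ and $(-1)^a e\otimes d\omega$ respectively, reproducing the full de Rham differential.

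For the Hodge filtration compatibility, recall that $C^\bullet(V_\lambda^\vee)$ is defined by the weight decomposition under the cocharacter $\mu$, and by construction $\E(\cdot)$ transfers this to the Hodge filtration on $\V_\lambda^\vee$ built from the same $\mu$ in the theory of automorphic vector bundles reviewed in subsection \ref{subsection auto vb}. Since the filtration $C^\bullet$ on $\tr{Std}^\bullet(V_\lambda^\vee)$ is assembled from $C^\bullet(V_\lambda^\vee)$ together with the $\mu$-weight filtration on $\wedge^\bullet \u_-^\vee$, which corresponds under $\E(\cdot)$ to the form-degree filtration on $\Omega_X^\bullet$, applying $\E(\cdot)$ term-wise yields precisely the Hodge filtration on $DR(\V_\lambda^\vee,\nabla)$.

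The main obstacle is the careful identification of the differentials for $a\geq 2$: whereas for $a=1$ the map $s^\ast - \tr{Id}^\ast$ encodes $\nabla$ rather directly via the first PD neighborhood, verifying for higher $a$ that the Chevalley--Eilenberg-type differential coming from the Lie algebra structure of $\u_-$ matches the de Rham differential on $\Omega_X^\bullet$ requires working with the HPD stratification and the identification $\E(\wh{\Gamma}(\u_-^\vee)) \simeq \wh{\P}_X$, together with some Leibniz-rule bookkeeping to account for the two pieces of $d_a^\vee$ simultaneously. This is the content of Lan--Polo \cite{LP} Proposition 4.31, which I would invoke directly rather than redo.
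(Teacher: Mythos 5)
Your proposal is correct and follows essentially the same route as the paper: the paper likewise works out the $a=1$ case explicitly (identifying $\E(d_1^\vee)$ with $s^\ast - \tr{Id}^\ast$ and hence with $\nabla$ after composing with $\V_\lambda^\vee \to \P_X^1\otimes\V_\lambda^\vee$) and then cites Lan--Polo \cite{LP} Corollary 4.30 and Proposition 4.31 for the full identification of the complexes and the Hodge filtration compatibility. One small caveat worth flagging: your claim that the two Koszul summands of $d_a$ translate under $\E(\cdot)$ \emph{respectively} into $\nabla(e)\wedge\omega$ and $(-1)^a e\otimes d\omega$ is a useful heuristic but not a clean split, since $\nabla$ is itself a first-order operator and the correspondence between the two summands of $d_a^\vee$ and the two pieces of the de Rham differential mixes the symbol and $\Ol_X$-linear parts; but as you defer the higher-$a$ bookkeeping to \cite{LP} Proposition 4.31, this imprecision does not affect the argument.
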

Similarly, we have
\begin{proposition}\label{prop p-curvature complex and p-std}
Under the functor $\E(\cdot)$, we have 
\[K(\V_\lambda^\vee,\psi)\simeq\E( p\tr{-Std}^\bullet(V_\lambda^\vee)), \] which is compatible on conjugate filtrations on both sides.
\end{proposition}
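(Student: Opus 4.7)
The plan is to adapt the proof of the preceding proposition, where Lan--Polo identify $DR(\V_\lambda^\vee,\nabla)$ with $\E(\tr{Std}^\bullet(V_\lambda^\vee))$ by means of the isomorphism $\E(\Sym_{\leq 1}(\u_-^\vee))\simeq \P_X^1$ together with the classical formula $\nabla=\varepsilon_1(pr_2^\ast)-pr_1^\ast$. Here I would replace the first-order jet avatar $\P_X^1$ of the PD envelope with its $p$-curvature counterpart coming from Proposition~\ref{prop p-curvature}, and replace the classical formula by Mochizuki's crystalline description of $\psi_\nabla$.

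The first step is to establish the $p$-curvature analog of the jet identification, namely that $\E$ carries $\Sym_{\leq 1}((\u_-^{(p)})^\vee)$ to $\Ol_X\oplus F_{X/k}^\ast\Omega^1_{X'}$, viewed as the degree-$\leq 1$ part of $\wh{\P}_X/I\wh{\P}_X$ via Proposition~\ref{prop p-curvature}(2). To do this I would use the commutative diagram of subsection~\ref{subsection auto vb} linking $\E$ on $\Rep\,P$ to $\E^{(p)}$ on $\Rep\,P^{(p)}$ via the Frobenius twists, so that applying $\E$ to the $P^{(p)}$-module $\Sym_{\leq 1}((\u_-^{(p)})^\vee)$ reduces to applying $F_{X/k}^\ast$ to the analog of $\P^1_{X'}$ on $X'$ and then passing to the $p$-curvature quotient $\wh{\P}_X/(J_X^{[2]}+I\wh{\P}_X)$ of Proposition~\ref{prop p-curvature}(1). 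Extended multiplicatively through exterior powers, this yields the natural identification $\E(\wedge^a((\u_-^{(p)})^\vee))\simeq F_{X/k}^\ast\Omega^a_{X'}$, so that applying $\E$ to the dual $p\tr{-Std}^\bullet(V_\lambda^\vee)$ produces a complex whose terms agree with those of $K(\V_\lambda^\vee,\psi)$.

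With the terms identified, the second step is to check that the differentials match. Exactly as in $\tr{Std}$, the differentials in $p\tr{-Std}_\bullet(V_\lambda)$ are degree-one in $\Sym(\u_-^{(p)})$, so after dualization they yield maps of the form $\Sym_{\leq 1}((\u_-^{(p)})^\vee)\otimes\wedge^{a-1}((\u_-^{(p)})^\vee)\otimes V_\lambda^\vee\to \wedge^a((\u_-^{(p)})^\vee)\otimes V_\lambda^\vee$. Applying $\E$ and using the formula $\psi_\nabla(e)=\varepsilon(pr_2^\ast e)-pr_1^\ast e\ \mathrm{mod}\ J_X^{[p+1]}+I\P_X$ from \cite{OV} Proposition~1.7, the resulting degree-one differential is exactly $\psi_\nabla$; the higher $\psi^a$ follow by the same exterior-algebra formalism used in \cite{LP} Proposition~4.31. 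For the conjugate filtration, the $D_\bullet$ on $\V_\lambda^\vee$ is induced by the universal de Rham $F$-gauge with $(G,\mu)$-structure of Theorem~\ref{thm univ F-gauge}, hence on the fibrewise representation $V_\lambda^\vee$ recovers the weight filtration for $\nu=\mu^{\sigma,-1}$; this is precisely the filtration used to define $D_\bullet$ on $p\tr{-Std}^\bullet(V_\lambda^\vee)$ in subsection~\ref{subsection std complexes}, so the compatibility of filtrations is immediate from the construction.

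The main obstacle is the first step, i.e.\ nailing down the precise identification of $\E$ on the degree-one piece of $\Sym((\u_-^{(p)})^\vee)$ with the appropriate quotient of $\wh{\P}_X$; this forces one to carefully navigate the Frobenius twists between $\Rep\,P$ and $\Rep\,P^{(p)}$, and to read Proposition~\ref{prop p-curvature} in its sheaf-theoretic incarnation. Once that structural identification is in hand, the remainder of the argument is a transparent parallel of the de Rham case, with connections replaced by $p$-curvatures via Mochizuki's formula.
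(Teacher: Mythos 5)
Your proposal follows essentially the same strategy as the paper: identify the terms of $\E(p\tr{-Std}^\bullet(V_\lambda^\vee))$ with those of $K(\V_\lambda^\vee,\psi)$ via the Frobenius-twist compatibilities of $\E(\cdot)$, then match the degree-one differential with the $p$-curvature $\psi_\nabla$ using the crystalline description of \cite{OV} Proposition~1.7 together with the isomorphisms of Proposition~\ref{prop p-curvature}, and extend to higher degrees exactly as in the de Rham case of \cite{LP} Proposition~4.31. The paper's Step~1 is however cleaner than yours: it records directly that $\E((\u_-^{(p)})^\vee)=Fr_X^\ast\Omega_X^1$ and $\E(\Sym_{\leq 1}((\u_-^{(p)})^\vee))=Fr_X^\ast\Gamma_{\leq 1}\Omega_X^1$, without routing through a $\P^1_{X'}$-analog and a quotient of $\wh{\P}_X$.

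One concrete slip in your Step~1 needs to be fixed: you describe the ``$p$-curvature quotient'' as $\wh{\P}_X/(J_X^{[2]}+I\wh{\P}_X)$ and attribute it to Proposition~\ref{prop p-curvature}(1), but that proposition gives $F_{X/k}^\ast\Omega^1_{X'}\simeq J_X/(J_X^{[p+1]}+I\P_X)$; the ideal must be $J_X^{[p+1]}$, not $J_X^{[2]}$. This is not cosmetic: quotienting by $J_X^{[2]}$ would kill the class $\tau^{[p]}$ (with $\tau=1\otimes x-x\otimes1$), which is exactly the image of $d\pi_X^\ast(x)$ under the isomorphism, so you would recover the first-order jet sheaf and the ordinary connection rather than $F_{X/k}^\ast\Omega^1_{X'}$ and the $p$-curvature. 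Since you correctly quote Mochizuki's formula $\psi_\nabla(e)=\varepsilon(pr_2^\ast e)-pr_1^\ast e \ \mathrm{mod}\ J_X^{[p+1]}+I\P_X$ in your Step~2, the intended mechanism is right; just make the Step~1 identification consistent with it.
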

\begin{proof}
We need a similar description as above for the complex $p\tr{-Std}^\bullet(V_\lambda^\vee)$, in the way to take care of the $p$-curvature.
Replace $P$ by $P^{(p)}$ and consider the complex $p\tr{-Std}_\bullet(V_\lambda)$. All the differentials are of degree one. Consider the map $\psi_1: \Sym(\u_-^{(p)})\otimes \u_-^{(p)}\otimes V_\lambda\ra \Sym(\u_-^{(p)})\otimes V_\lambda$ which is given by $\psi_1: \u_-^{(p)}\otimes V_\lambda\ra \Sym_{\leq 1}(\u_-^{(p)})\otimes V_\lambda$. We can explicitly describe its dual 
\[\psi_1^\vee: \Sym_{\leq 1}(\u_-^{(p)\vee})\otimes V_\lambda^\vee\ra  V_\lambda^\vee\otimes\u_-^{(p)\vee} \]
similarly as $d_1^\vee$. 
We denote the composition of $\psi_1^\vee$ with the canonical map $V_\lambda^\vee\ra \Sym_{\leq 1}(\u_-^{(p)\vee})\otimes V_\lambda^\vee, v\mapsto 1\otimes v$ by $\psi^\vee$.
Then as above $\psi^\vee: V_\lambda^\vee\ra V_\lambda^\vee\otimes\u_-^{(p)\vee}$ can be explicitly described as \[\psi^\vee(v)=\sum_{i=1}^n(x_iv)\otimes f_i,\] for any $k$-basis $x_1,\dots, x_n$ of $\u_-^{(p)}$ and dual basis $f_1,\dots, f_n$ of $\u_-^{(p)\vee}$.

Under the functor $\E(\cdot)$, we have \[\E(\u_-^{(p)\vee})=Fr_X^\ast\Omega_X^1,\quad \tr{and}\quad \E(\Sym_{\leq 1}(\u_-^{(p)\vee}))=Fr_X^\ast \Sym_{\leq 1}\Omega_X^1=Fr_X^\ast \Gamma_{\leq 1}\Omega_X^1.\]
The map $\psi_1^\vee$ is sent to a morphism \[
\E(\psi_1^\vee): (Fr_X^\ast\Gamma_{\leq 1}\Omega_X^1) \otimes \V_\lambda^\vee\ra \V_\lambda^\vee\otimes Fr_X^\ast\Omega_X^1.\] 
Composed with the canonical map $\V_\lambda^\vee\ra (Fr_X^\ast\Gamma_{\leq 1}\Omega_X^1) \otimes \V_\lambda^\vee$,
we get a map \[\psi^0=\E(\psi^\vee): \V_\lambda^\vee\ra \V_\lambda^\vee\otimes Fr_X^\ast\Omega_X^1.\]
We claim that $\psi^0$ is the $p$-curvature $\psi_\nabla$ of $\nabla$. Indeed, 
recall by Proposition \ref{prop p-curvature} we have an
 isomorphism $Fr_X^\ast\Omega_X^1=F_{X/k}^\ast\Omega_{X'}^1\simeq J_X/(J_X^{[p+1]}+I\P_X)$. Using local coordinates, it is given by \[d\pi^\ast_X(x)\mapsto \tau^{[p]} \quad\tr{mod}\quad \,J_X^{[p+1]}+I\P_X,\] with $\tau=1\otimes x-x\otimes 1$, cf. \cite{OV} Proposition 1.6.
 Then using local coordinates, $\psi^0$ is given by \[\psi^0(v)=\sum_{i=1}^n\nabla(\partial_i)^p(v)\otimes \tau_i^{[p]},\]
 where $\partial_i=\frac{\partial}{\partial x_i}$ and $\tau_i=1\otimes x_i-x_i\otimes 1$. 
 Now the claim follows 
by the similar description of $\nabla$ (as above) in the proof of \cite{LP} Proposition 4.27, and the crystalline description of $\psi_\nabla$ in \cite{OV} Proposition 1.7.

Similar analysis holds for $\psi_a^\vee$. Thus we get $\E( p\tr{-Std}^\bullet(V_\lambda^\vee))\simeq K(\V_\lambda^\vee,\psi)$, which is compatible on conjugate filtrations on both sides.
\end{proof}

\subsection{Dual BGG complexes in characteristic $p$}\label{subsection dual BGG}
We transfer the results of subsection \ref{subsection BGG} to geometric setting.
\begin{theorem}\label{thm dual BGG}
	Assume that $\lambda$ is $p$-small.
We have a $C^\bullet$-filtered quasi-isomorphic embedding of complexes \[BGG(\mathcal{V}_\lambda^\vee)\hookrightarrow DR(\mathcal{V}_\lambda^\vee,\nabla),\] 
where $BGG(\mathcal{V}_\lambda^\vee)$ has the form
\[0\ra BGG^0(\mathcal{V}_\lambda^\vee)\ra BGG^1(\mathcal{V}_\lambda^\vee)\ra \cdots\ra BGG^n(\mathcal{V}_\lambda^\vee)\ra 0 \]
with $BGG^a(\mathcal{V}_\lambda^\vee)=\bigoplus_{w\in {}^JW(a)}\mathcal{W}^\vee_{w\cdot \lambda}$,
such that the associated graded complex \[\gr_{C}BGG(\mathcal{V}_\lambda^\vee)=\bigoplus_{w\in {}^JW}\mathcal{W}^\vee_{w\cdot \lambda}\] is a quasi-isomorphic direct summand of $\gr_{C}DR(\mathcal{V}_\lambda^\vee,\nabla)$ with trivial differentials. 
\end{theorem}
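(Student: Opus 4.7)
The plan is to transport the representation-theoretic BGG embedding of Theorem \ref{thm BGG} to the geometric setting via the automorphic functor $\E(\cdot)$, following the strategy of Polo-Tilouine \cite{PT} and Lan-Polo \cite{LP}.

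The first step is to dualize the embedding from Theorem \ref{thm BGG}. That theorem provides a $C^\bullet$-filtered quasi-isomorphic embedding $\tr{BGG}_\bullet(V_\lambda) \hookrightarrow \tr{Std}_\bullet(V_\lambda)$ of complexes of $U(\g)$-$P$-modules, with $\tr{BGG}_a(V_\lambda) = \bigoplus_{w \in {}^JW(a)} \Verm(W_{w\cdot\lambda})$. Passing to duals and extracting the top-weight $P$-quotient of each $\Verm(W_{w\cdot\lambda})^\vee$, one constructs the dual BGG complex $\tr{BGG}^\bullet(V_\lambda^\vee)$ with $\tr{BGG}^a(V_\lambda^\vee) = \bigoplus_{w \in {}^JW(a)} W_{w\cdot\lambda}^\vee$, realized as a $P$-subcomplex of $\tr{Std}^\bullet(V_\lambda^\vee)$ whose differentials are inherited from the degree-one Verma differentials of $\tr{BGG}_\bullet$ by duality.

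Next, I would apply the exact functor $\E(\cdot) : \Rep\,P \ra \Vect(X)$ term-wise. Combined with the identification $DR(\V_\lambda^\vee,\nabla) \simeq \E(\tr{Std}^\bullet(V_\lambda^\vee))$ recalled in the previous subsection, this produces the desired embedding $BGG(\V_\lambda^\vee) \hookrightarrow DR(\V_\lambda^\vee,\nabla)$ with $BGG^a(\V_\lambda^\vee) = \bigoplus_{w \in {}^JW(a)} \W_{w\cdot\lambda}^\vee$, automatically compatible with the $C^\bullet$-filtrations. For the assertion on graded complexes, Theorem \ref{thm BGG} guarantees that $\gr_C \tr{BGG}_\bullet(V_\lambda) \hookrightarrow \gr_C \tr{Std}_\bullet(V_\lambda)$ is a quasi-isomorphic direct summand of degree zero with trivial differentials; since duality and $\E(\cdot)$ are exact and preserve direct summands, the induced embedding $\gr_C BGG(\V_\lambda^\vee) \hookrightarrow \gr_C DR(\V_\lambda^\vee,\nabla)$ retains these properties.

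The main technical point will be performing the dualization correctly: the differentials of $\tr{BGG}_\bullet$ are sums of degree-one Verma morphisms, and their duals (after projection to top-weight $P$-quotients) must be identified, via $\E(\cdot)$, with explicit first-order differential operators between the automorphic vector bundles $\W_{w\cdot\lambda}^\vee$, in the same spirit as the description of the de Rham differentials in terms of $\P_X^1$ recalled just before Proposition \ref{prop p-curvature complex and p-std}. This compatibility is precisely what is carried out in \cite{LP} Section 5; given the framework already established in the present paper, only a translation into the current notation is required.
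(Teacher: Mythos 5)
Your overall route coincides with the paper's: the proof in the text is essentially the citation ``this follows from Theorem~\ref{thm BGG} as in \cite{LP} Theorem~5.9,'' and your steps — dualize the Verma BGG embedding, transport via $\E(\cdot)$, and use $DR(\V_\lambda^\vee,\nabla)\simeq\E(\tr{Std}^\bullet(V_\lambda^\vee))$ — are the intended ones.

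Two points are treated too loosely, though, and the second is the one the paper explicitly flags. First, you do not pin down the $C^\bullet$-filtration on $BGG(\V_\lambda^\vee)$; the paper specifies it as $C^iBGG(\V_\lambda^\vee)=\bigoplus_{w\cdot\lambda(H)\leq -i}\W^\vee_{w\cdot\lambda}$ with $H\in X_\ast(T)$ the cocharacter attached to $\mu$, and it is relative to this filtration that all the graded statements are made — without exhibiting it, ``$C^\bullet$-filtered embedding'' is not meaningful. Second, and more substantively, your sentence ``passing to duals and extracting the top-weight $P$-quotient of each $\Verm(W_{w\cdot\lambda})^\vee$, one constructs \dots\ realized as a $P$-subcomplex of $\tr{Std}^\bullet(V_\lambda^\vee)$'' glosses over the real issue. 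Dualizing the embedding $\tr{BGG}_\bullet(V_\lambda)\hookrightarrow\tr{Std}_\bullet(V_\lambda)$ a priori gives a \emph{surjection} $\tr{Std}^\bullet(V_\lambda^\vee)\twoheadrightarrow\tr{BGG}_\bullet(V_\lambda)^\vee$, whose terms are $\Verm(W_{w\cdot\lambda})^\vee$, not $W_{w\cdot\lambda}^\vee$; also $W_{w\cdot\lambda}^\vee$ is the degree-zero \emph{submodule} of $\Verm(W_{w\cdot\lambda})^\vee$, not a quotient. That the geometric dual BGG complex genuinely sits inside $DR(\V_\lambda^\vee,\nabla)$ as a subcomplex — rather than merely admitting a map to it, or being a direct summand only up to quasi-isomorphism — is the nontrivial feature the paper singles out (``we have the additional property that $BGG(\V_\lambda^\vee)\hookrightarrow DR(\V_\lambda^\vee,\nabla)$ is an embedding, as we work with PD differential operators''), with a pointer to \cite{LP} Remark~5.20. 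The mechanism is that under $\E(\cdot)$ the degree-one dual Verma maps $f^\vee:\Sym_{\leq1}(\u_-^\vee)\otimes W_2^\vee\ra W_1^\vee$ become first-order PD differential operators via $\E(\Sym_{\leq1}(\u_-^\vee))\simeq\P_X^1$, and it is only in the PD framework that these restrict to honest maps $\W_2^\vee\ra\W_1^\vee$ making the BGG terms a subcomplex. Your deferral to \cite{LP} Section~5 does cover this, but the proposal as written treats it as mere notational translation, whereas it is the place where the proof actually has content.
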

\begin{proof}
This follows from Theorem \ref{thm BGG} as in \cite{LP} Theorem 5.9. 
Let $H\in X_\ast(T)$ be the element defined by the conjugacy class of $\mu$. 
The $C^\bullet$-filtration on $BGG(\mathcal{V}_\lambda^\vee)$ is defined by
\[C^iBGG(\mathcal{V}_\lambda^\vee)=\bigoplus_{w\in {}^JW\atop w\cdot\lambda(H)\leq -i }\mathcal{W}^\vee_{w\cdot \lambda}.\]
 Note here we have the additional property that $BGG(\mathcal{V}_\lambda^\vee)\hookrightarrow DR(\mathcal{V}_\lambda^\vee,\nabla)$ is an embedding, as we work with PD differential operators. See also \cite{LP} Remark 5.20.
\end{proof}

We define a $D_\bullet$-filtration on $BGG(\mathcal{V}_\lambda^\vee)$  by 
\[D_iBGG(\mathcal{V}_\lambda^\vee)= \bigoplus_{w\in {}^JW\atop w\cdot\lambda(H)\geq -i}\mathcal{W}^\vee_{w\cdot \lambda},\]which induces a $D_\bullet$-filtration on $Fr_X^\ast BGG(\mathcal{V}_\lambda^\vee)$.
\begin{proposition}\label{prop dual p-BGG}
	Assume that $\lambda$ is $p$-small. We have a $D_\bullet$-filtered quasi-isomorphic embedding of complexes \[Fr_X^\ast\tr{BGG}(\mathcal{V}_\lambda^\vee)\hookrightarrow DR(\mathcal{V}_\lambda^\vee,\nabla),\] 
	such that $\gr_{D}Fr_X^\ast\tr{BGG}(\mathcal{V}_\lambda^\vee)$ is a quasi-isomorphic direct summand of $\gr_{D}DR(\mathcal{V}_\lambda^\vee,\nabla)$. 
\end{proposition}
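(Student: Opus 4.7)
The plan is to follow the same pattern as the proof of Theorem \ref{thm dual BGG}, but replace its Hodge-filtered algebraic input (Theorem \ref{thm BGG}) by the conjugate-filtered analogue, Proposition \ref{prop p-BGG}, then pass to geometry via the $p$-curvature complex, and finally transfer the conclusion to the de Rham complex via the Cartier-type identification from subsection \ref{subsection conj fil}.

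First, I would dualize Proposition \ref{prop p-BGG}: taking $k$-linear duals and reindexing the filtration yields a $D_\bullet$-filtered quasi-isomorphic embedding of cochain complexes of $U(\g)$-$P^{(p)}$-modules
$$p\tr{-BGG}^\bullet(V_\lambda^\vee) \hookrightarrow p\tr{-Std}^\bullet(V_\lambda^\vee),$$
whose associated graded map is a direct summand of degree zero with trivial differentials on the BGG side. Next, I would geometrize by applying the automorphic-bundle functor in the Frobenius-twisted sense from the beginning of subsection \ref{subsection auto vb}: using Proposition \ref{prop p-curvature complex and p-std}, the image of $p\tr{-Std}^\bullet(V_\lambda^\vee)$ is the $D_\bullet$-filtered $p$-curvature complex $K(\V_\lambda^\vee,\psi)$, and the image of $p\tr{-BGG}^\bullet(V_\lambda^\vee)$ is $Fr_X^\ast BGG(\V_\lambda^\vee)$ equipped with the combinatorial $D_\bullet$-filtration (via $w\cdot\lambda(H)$) defined just before the proposition. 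This produces a $D_\bullet$-filtered quasi-isomorphic embedding
$$Fr_X^\ast BGG(\V_\lambda^\vee) \hookrightarrow K(\V_\lambda^\vee,\psi),$$
whose associated graded is a direct summand with trivial differentials on the BGG side.

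Third, I would transfer the conclusion to the de Rham complex $DR(\V_\lambda^\vee,\nabla)$ via the Cartier-type equivalence $C_X$ recalled in subsection \ref{subsection conj fil}. By Proposition \ref{prop Frob gauge complexes}, $C_X$ identifies $\gr_D DR(\V_\lambda^\vee,\nabla)$ in $D\bigl(Mod^\bullet(\gr_N \D_X)\bigr)$ with $\gr_D K(\V_\lambda^\vee,\psi)$ in $D\bigl(Mod^\bullet(Fr_X^\ast\Sym\T_X)\bigr)$. The direct-summand embedding from Step~2 at the graded level therefore transfers to a direct-summand embedding $\gr_D Fr_X^\ast BGG(\V_\lambda^\vee) \hookrightarrow \gr_D DR(\V_\lambda^\vee,\nabla)$. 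To lift this from a graded embedding to a $D_\bullet$-filtered embedding at the level of complexes inside $DR(\V_\lambda^\vee,\nabla)$, I would exploit the triviality of differentials on the graded BGG subcomplex together with the canonical splitting of the tensor Azumaya algebra $\gr_N \D_X$ from subsection \ref{subsection conj fil}, which reduces the lifting problem to an Ext-vanishing question in $L$-representation theory that is controlled by the $p$-smallness of $\lambda$.

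The main obstacle I anticipate is this final lift in Step~3: in the Hodge-filtered case handled by \cite{LP}, the analogous lift exploits the order filtration $F_\bullet$ on $\D_X$ whose associated graded is the commutative algebra $\Sym\T_X$, so the obstruction is controlled by standard $L$-representation theory and the $p$-smallness hypothesis. Here the analogous obstruction must be resolved using the conjugate filtration $N^\bullet$ on $\D_X$, whose graded is the split tensor Azumaya algebra over $F_{X/k}^\ast\Sym\T_{X'}$; one must verify that the obstruction still vanishes, which should reduce to the same $L^{(p)}$-semisimplicity input used to construct $p\tr{-BGG}^\bullet(V_\lambda^\vee)$ inside $p\tr{-Std}^\bullet(V_\lambda^\vee)$ in Proposition \ref{prop p-BGG}. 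Alternatively, if the statement is to be read with the embedding landing in $K(\V_\lambda^\vee,\psi)$, then Step~2 already concludes the proof and the de Rham formulation follows formally from the Cartier identification of associated gradeds in Proposition \ref{prop Frob gauge complexes}, which is in fact all that is needed for the subsequent cohomology statement in Theorem \ref{thm de Rham coh BGG}.
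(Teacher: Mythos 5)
Your approach is essentially the same as the paper's, which simply cites Propositions \ref{prop p-BGG}, \ref{prop p-curvature complex and p-std}, and \ref{prop Frob gauge complexes} (and its proof), ``as in the last theorem'' (Theorem \ref{thm dual BGG}): dualize and geometrize the $p$-BGG embedding via $\E(\cdot)$ to land in the $p$-curvature complex, then transfer to $\gr_D DR(\V_\lambda^\vee,\nabla)$ via the Azumaya-splitting equivalence $C_X$. Your worry about a further lifting obstruction in Step~3 is a red herring, and your closing remark resolves it correctly: the paper does not construct a literal $\Ol_X$-linear embedding into the $D_\bullet$-filtered object $DR(\V_\lambda^\vee,\nabla) \in DF(\D_X,\J_X)$ beyond what $C_X^{-1}(\gr_D K(\E,\psi)) \simeq \gr_D DR(\E,\nabla)$ already gives, and indeed only the graded direct-summand identification is used in Theorem \ref{thm de Rham coh BGG}. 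No Ext-vanishing argument is invoked or needed.
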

\begin{proof}
This follows from Propositions \ref{prop p-BGG}, \ref{prop p-curvature complex and p-std} and Proposition \ref{prop Frob gauge complexes} (and its proof), as in the last theorem.
\end{proof}

\subsection{De Rham cohomology with coefficients}\label{subsection dR coh}
Now we can assemble previous results in subsections \ref{subsection std complexes}--\ref{subsection dual BGG} to prove the second main result of this section. In the following, for simplicity we write \[H_{\dR}^i(X, \mathcal{V}_\lambda^\vee)=H_{\dR}^i(X/k, (\mathcal{V}_\lambda^\vee,\nabla)).\] 

Suppose that $X$ is of PEL type. Assume  that $X$ is proper, and the reductive group $G$ defined by the PEL datum is connected. 
For a weight $\lambda\in X^\ast(T)_+$, let $|\lambda|=|\lambda|_L$ be the integer defined as in \cite{LS1} Definition 3.2. More precisely, if $G_{k}=(\prod_{\tau\in I}G_\tau)\times \G_m$ decomposes as a product local factors arising from the decomposition of the PEL datum, with induced dominant characters $\lambda_\tau$ which can be described as a tuple of integers $\lambda_\tau=(\lambda_{\tau,i_\tau})$ with $\lambda_{\tau,1}\geq \lambda_{\tau,2}\geq\cdots\geq \lambda_{\tau,r_\tau}$, then $\lambda=((\lambda_\tau)_\tau,\lambda_0)$,  \[|\lambda|=\sum_{\tau\in I}|\lambda_\tau|\quad \tr{with}\quad |\lambda_\tau|=\sum_{i_\tau}\lambda_{\tau,i_\tau}',\] where if $G_\tau\simeq \tr{Sp}_{2r_\tau}$ then $\lambda_{\tau,i_\tau}'=\lambda_{\tau,i_\tau}$ (which can be chosen such that $\lambda_{\tau,r_\tau}\geq 0$); if $G_\tau\simeq \GL_{r_\tau}$, let $\lambda_{\tau,r_{\tau}+1}$ be the unique even integer such that $1\geq \lambda_{\tau,r_{\tau}}-\lambda_{\tau,r_{\tau}+1}\geq 0$ and set $\lambda_{\tau,i_{\tau}}'=\lambda_{\tau,i_\tau}-\lambda_{\tau,r_{\tau}+1}$. For each $\tau\in I$ such that $G_\tau\simeq \tr{Sp}_{2r_\tau}$, assume that $\max(2,r_\tau)<p$.
\begin{theorem}\label{thm de Rham coh BGG}
	Keep the above assumptions and notations. Let $n=\dim\,X$.  Assume moreover $2n<p$.
	\begin{enumerate}
		\item For any $0\leq i\leq 2n$ and $\lambda\in X^\ast(T)_+$ such that $\lambda$ is $p$-small with $|\lambda|<p$, and $V_\lambda$ is defined over $\F_p$, then there is a natural $F$-zip structure on $H_{\dR}^i(X, \mathcal{V}_\lambda^\vee)$, which is induced by the cohomology of the de Rham $F$-gauge $(\mathcal{V}_\lambda^\vee, \nabla, C^\bullet, D_\bullet, \varphi)$.
		\item For the $\lambda$ as above, the $F$-zip structure on $H_{\dR}^i(X, \mathcal{V}_\lambda^\vee)$ is determined by the dual BGG complex $BGG(\mathcal{V}_\lambda^\vee)$ as follows. Let $H\in X_\ast(T)$ be the element defined by the conjugacy class of $\mu$. For each $a\in \Z$,  by the construction of dual BGG complexes, we have
		\[
		H^i\Big(X, \gr_C^aBGG(\mathcal{V}_\lambda^\vee)\Big) =\bigoplus_{w\in {}^JW \atop w\cdot\lambda(H)=-a}H^{i-\ell(w)}(X, \mathcal{W}^\vee_{w\cdot\lambda}). \]
		Then there is a commutative diagram of isomorphisms
		\[\xymatrix{
		Fr_k^\ast \gr_C^aH^i_{\dR}(X,\mathcal{V}_\lambda^\vee)\ar[r]^{\varphi_a}& \gr_D^aH^i_{\dR}(X,\mathcal{V}_\lambda^\vee)\\
			Fr_k^\ast H^i\Big(X, \gr_C^aBGG(\mathcal{V}_\lambda^\vee)\Big)\ar[u]_{\sim} \ar[r]^{\varphi_a}&
		 H^i\Big(X, \gr_D^aFr_X^\ast BGG(\mathcal{V}_\lambda^\vee)\Big)\ar[u]_{\sim}.
		}\] 
	\end{enumerate}
\end{theorem}
\begin{proof}
For (1), we use the theory of Kuga families to deduce that the Hodge-de Rham spectral sequence is degenerate. More precisely, under the assumption as above, there exists an integer $t_\lambda$ such that $\mathcal{V}_\lambda^\vee$ is isomorphic to an explicit direct summand of \[H^{n_\lambda}_\dR(\mathcal{A}^{n_\lambda}/X)(-t_\lambda)\] with compatible Hodge filtrations, where $n_\lambda=|\lambda|$ and $\mathcal{A}^{n_\lambda}\ra X$ is the $n_\lambda$-times fiber product of the universal abelian scheme $\mathcal{A}\ra X$, cf. \cite{LS1} Proposition 3.7. (For the non proper case, see \cite{Lan} and \cite{LS2} section 5.) Moreover, by \cite{LS1} Proposition 4.8, $H_{\dR}^i(X, \mathcal{V}_\lambda^\vee)$ is isomorphic to an explicit direct summand of \[H^{i+n_\lambda}_{\dR}(\mathcal{A}^{n_\lambda}/k)(-t_\lambda)\] for each $0\leq i\leq 2n$, and by \cite{LS1} Proposition 4.4 and Lemma 4.7,
 the Hodge-de Rham spectral sequence
\[E_1^{a,b}=H^{a+b}\Big(X,\gr_C^aDR(\V_\lambda^\vee,\nabla)\Big)\Rightarrow H^{a+b}_{\dR}(X,\V_\lambda^\vee)\] is isomorphic to a direct summand of the spectral sequence \[E_1^{a+n_\lambda,b}=H^{a+b+n_\lambda}\Big(\mathcal{A}^{n_\lambda},\Omega^{a+n_\lambda}_{\mathcal{A}^{n_\lambda}/k}\Big)(-t_\lambda)\Rightarrow H^{a+b+n_\lambda}_{\dR}(\mathcal{A}^{n_\lambda}/k)(-t_\lambda).\] Since the later degenerates at $E_1$ by Example \ref{example F-gauge}, so does the former.
Then we get the $F$-zip structure on cohomology by Proposition \ref{prop coh F-gauge}.

For (2),  by part (1), for each $a\in\Z$, we get an isomorphism
\[\varphi_a: Fr_k^\ast \gr_C^aH^i_{\dR}(X,\mathcal{V}_\lambda^\vee)\st{\sim}{\ra} \gr_D^aH^i_{\dR}(X,\mathcal{V}_\lambda^\vee).\] By 
Theorem \ref{thm dual BGG}, the embedding $\gr_CBGG(\mathcal{V}_\lambda^\vee)\hookrightarrow \gr_C DR(\V_\lambda^\vee,\nabla)$ is a quasi-isomorphic direct summand. Combined with the degeneracy of the Hodge-de Rham spectral sequence, we get a canonical isomorphism \[Fr_k^\ast H^i\Big(X, \gr_C^aBGG(\mathcal{V}_\lambda^\vee)\Big)\st{\sim}{\ra}
Fr_k^\ast \gr_C^aH^i_{\dR}(X,\mathcal{V}_\lambda^\vee).\] 
By
Proposition \ref{prop dual p-BGG},  the embedding $\gr_DFr_X^\ast BGG(\mathcal{V}_\lambda^\vee)\hookrightarrow \gr_D DR(\V_\lambda^\vee,\nabla)$ is a quasi-isomorphic direct summand. Combined with the degeneracy of the conjugate spectral sequence (which follows from that for the Hodge-de Rham spectral sequence by Proposition \ref{prop coh F-gauge}), we have also a canonical isomorphism
\[H^i\Big(X, \gr_D^aFr_X^\ast BGG(\mathcal{V}_\lambda^\vee)\Big)\st{\sim}{\ra}\gr_D^aH^i_{\dR}(X,\mathcal{V}_\lambda^\vee).\] Therefore the above $\varphi_a$ induces an isomorphism \[\varphi_a: Fr_k^\ast H^i\Big(X, \gr_C^aBGG(\mathcal{V}_\lambda^\vee)\Big)\st{\sim}{\ra}H^i\Big(X, \gr_D^aFr_X^\ast BGG(\mathcal{V}_\lambda^\vee)\Big)\] such that we have the above commutative diagram.
\end{proof}


\subsection{Mod $p$ crystalline comparison with small coefficients}
Finally, as in \cite{MT} section 6, we discuss briefly the mod $p$ crystalline comparison theorem with coefficients to indicate how to pass from mod $p$ de Rham cohomology to mod $p$ \'etale cohomology, following Faltings \cite{Fal, Fal99}. There are more recent developments by prismatic methods, for example see \cite{DLMS, GR}.   As currently a general comparison theorem for the $p$-torsion coefficients in $F\tr{-Gauge}_{dR}(X)$ is not available yet, we restrict to the Fontaine-Laffaille case, and moreover the absolutely unramified case to match our previous discussions. Thus we are in the setting of \cite{Fal}. Then the following discussions (with more details) already appeared in the works of Lan-Suh \cite{LS1, LS2}. 

Let $k$ be a perfect field of characteristic $p$, $W=W(k)$ and $E=W\otimes\Q$.
Consider $\X/W$ a proper smooth scheme over $W$ with $X/k$ its special fiber over $k$, and $\X_\eta$ its generic fiber over $E$.
Consider the category \[\MF^\nabla_{[0,p-2]}(\X_2/W_2)\] of $p$-torsion Fontaine modules associated to the $W_2=W_2(k)$-lift $\X_2$ of $X$. By Proposition \ref{prop MF dR gauge}, we can view it as a full subcategory of $F\tr{-Gauge}_{dR}(X)$ as we have a fully faithful functor \[ \MF^\nabla_{[0,p-2]}(\X_2/W_2)\hookrightarrow F\tr{-Gauge}_{dR}(X).\]  By \cite{Fal}, there is a fully faithful contravariant functor
\[\mathbb{D}: \MF^\nabla_{[0,p-2]}(\X_2/W_2)\lra \tr{Loc}(\X_\eta, \F_p), \]
where $\tr{Loc}(\X_\eta, \F_p)$ is the category of $\F_p$-local system on $\X_\eta$. Let $\mathbb{V}=\mathbb{D}^\ast$ be the functor which is the composition of $\mathbb{D}$ with the Pontryagin dual.
Moreover, if the $p$-torsion Fontaine module $\ul{\E}=(\E,\nabla, \Fil,\Phi) \in \MF^\nabla_{[0,p-2]}(\X_2/W_2)$ is associated with $\mathbb{L}\in \tr{Loc}(\X_\eta, \F_p)$, in the sense that \[\mathbb{V}(\ul{\E})=\mathbb{L},\] then for any integer $i\geq 0$ and $i+a\leq p-2$ where the integer $a$ is given by $\Fil^{a+1}=0$, we have an isomorphism (cf. \cite{Fal} Theorem 5.3)
\[\mathbb{V}(H^i_{\dR}(X, \E))\simeq H^i_{\et}(\X_{\eta, \ov{E}}, \mathbb{L}). \]

Now let $\X/W$ be the smooth integral canonical model of a PEL type Shimura variety, which we assume to be proper\footnote{This assumption can be removed, cf. \cite{LS2}, if we work with a smooth toroidal compactification and canonical extensions of automorphic vector bundles.} for simplicity. Suppose $2\dim \,X<p$.
Assume as before that $\lambda$ is $p$-small and $V_\lambda$ is defined over $\F_p$ (thus defined over $k$).
\begin{proposition}
If the weight $\lambda\in X^\ast(T)_+$ is $p$-small and the associated de Rham $F$-gauge
 $\ul{\V_\lambda^\vee}=(\V_\lambda^\vee, \nabla, C^\bullet, D_\bullet, \varphi)$ comes from an object $(\V_\lambda^\vee, \nabla, \Fil, \Phi)\in \MF^\nabla_{[0,p-2]}(\X_2/W_2)$, then we have 
\[ \mathbb{V}(\ul{\V_\lambda^\vee})=\mathbb{L}_\lambda^\vee,\]
where $\mathbb{L}_\lambda$ is the $\F_p$-local system on $\X_\eta$ associated to the $\F_p$-representation of $G$ with highest weight $\lambda$. In particular, for an integer $i\geq 0$ with $i+|\lambda|\leq p-2$, we have an isomorphism
\[\mathbb{V}(H^i_{\dR}(X, \V_\lambda^\vee))\simeq H^i_{\et}(\X_{\eta,\ov{E}}, \mathbb{L}_\lambda^\vee). \]
\end{proposition}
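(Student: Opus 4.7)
The plan is to reduce both assertions to Faltings' foundational results in \cite{Fal} via a Tannakian argument applied to the universal $(G,\mu)$-structure on $\X$.

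\textbf{Step 1 (Tannakian setup).} First I would observe that by Theorem \ref{thm univ F-gauge} together with its obvious crystalline lift (i.e.\ the existence of a universal filtered $F$-crystal with $G$-structure on the formal completion of $\X$, e.g.\ via \cite{Lov}), there is an exact tensor functor
\[
\Rep\,G \lra \MF^\nabla_{[0,p-2]}(\X_2/W_2)
\]
sending $V_\lambda$ to $(\V_\lambda^\vee,\nabla,\Fil,\Phi)$ whenever $\lambda$ is $p$-small and $V_\lambda$ is defined over $\F_p$. Faltings' functor $\mathbb{V}$ is itself a tensor functor compatible with duals (see \cite{Fal} Theorem 2.6$^\ast$ and its consequences), so the composite $V_\lambda \mapsto \mathbb{V}(\ul{\V_\lambda^\vee})$ is an exact $\F_p$-linear tensor functor $\Rep\,G \to \tr{Loc}(\X_\eta,\F_p)$. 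By Tannakian reconstruction it is determined by its value on a single faithful representation.

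\textbf{Step 2 (Comparison on the tautological representation).} I would take the standard representation $V$ coming from the Siegel embedding and the universal abelian scheme $\A\to X$ that underlies the construction of the Fontaine module structure. Here the Fontaine module associated to $V$ is $H^1_{\dR}(\A/\X_2)$ with its Hodge filtration and Frobenius, and the classical comparison theorem of Fontaine--Messing (a special case of \cite{Fal} contained in its own origins) yields
\[
\mathbb{V}\bigl(H^1_{\dR}(\A/\X_2)\bigr) \simeq \A[p]^\vee_\eta.
\]
Since $\A[p]^\vee_\eta$ is exactly the $\F_p$-local system on $\X_\eta$ attached to the dual standard representation of $G$ via the Shimura datum, Tannakian formalism (applied to the $G$-torsor of tensor isomorphisms between the two tensor functors, which admits a section by construction of the universal object) upgrades this single identification to the equality
\[
\mathbb{V}(\ul{\V_\lambda^\vee}) = \mathbb{L}_\lambda^\vee
\]
for every $p$-small $\lambda$ with $V_\lambda$ defined over $\F_p$. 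This proves (1).

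\textbf{Step 3 (Cohomology comparison).} For (2), I would invoke Faltings' crystalline comparison theorem \cite{Fal} Theorem 5.3 applied to the Fontaine module $(\V_\lambda^\vee,\nabla,\Fil,\Phi)$. That theorem requires $i+a\leq p-2$ where $a$ is such that $\Fil^{a+1}(\V_\lambda^\vee)=0$. The Hodge filtration on $\V_\lambda^\vee$ is determined by the cocharacter $\mu$ acting on $V_\lambda^\vee$, and a direct weight count (as carried out in \cite{LS1} Definition~3.2 and Lemma 3.3) shows that $a = |\lambda|$; combining this with the identification of Step 2 yields
\[
\mathbb{V}\bigl(H^i_{\dR}(X,\V_\lambda^\vee)\bigr) \simeq H^i_{\et}(\X_{\eta,\ov{E}},\mathbb{L}_\lambda^\vee)
\]
in the stated range.

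\textbf{Main obstacle.} The serious issue is Step 2, namely upgrading the Fontaine--Messing comparison on $H^1_{\dR}(\A)$ to a comparison of $G$-torsors: one must check that the tensors $s_\alpha$ defining the reduction to $G$ on the crystalline side are matched under $\mathbb{V}$ with the corresponding Galois-invariant tensors on the \'etale side. This is the content of the compatibility of Kisin's construction with Faltings' functor on $p$-torsion; it is known in our setting because the $s_\alpha$ come from absolute Hodge cycles on $\A$ that are simultaneously de Rham and \'etale. Once this matching is in place, everything else is formal Tannakian bookkeeping plus the application of Faltings' theorem.
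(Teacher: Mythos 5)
The paper's own proof is a one-liner: it says that part (1) is proved ``by the explicit realization of the automorphic vector bundle $\V_\lambda^\vee$ in terms of the Kuga families over $X$'' and part (2) follows from Faltings' theorem, both as carried out in \cite{LS1} sections 3--5. That approach is an object-by-object argument: for each individual $p$-small $\lambda$, one realizes $\V_\lambda^\vee$ explicitly as a direct summand of the relative de Rham cohomology of a self-product $\A^{\times m}$ (cut out by endomorphism idempotents and Young symmetrizers), where the comparison isomorphism is already available, and transports it across the idempotent.

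Your proposal instead tries to run a global Tannakian argument, and I think this route has a genuine gap beyond the tensor-matching issue you flag. In Step~1 you assert an exact \emph{tensor} functor $\Rep\,G \to \MF^\nabla_{[0,p-2]}(\X_2/W_2)$, but the Fontaine--Laffaille category $\MF^\nabla_{[0,p-2]}$ is not closed under tensor product: the tensor of two objects with filtration lengths $a$ and $b$ has filtration length up to $a+b$, which in general escapes the range $[0,p-2]$. For exactly this reason one also cannot say ``the functor is determined by its value on a single faithful representation'' as in Step~2: the class of $p$-small weights is not stable under tensor products, so the objects on which your functor is defined do not form a Tannakian subcategory, and the usual reconstruction argument does not apply. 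This is not a technicality one can wave away --- it is precisely why \cite{LS1} (and, following it, this paper) go through Kuga families and explicit idempotent cuts rather than through Tannakian formalism. Your Step~3 and the identification $a=|\lambda|$ are fine, and your identification of the ``main obstacle'' (matching the tensors $s_\alpha$ on the crystalline and \'etale sides) is a real second issue, but in the PEL setting it is subsumed by the Kuga-family realization since the relevant tensors come from polarizations and endomorphisms that are built into the moduli problem. In short: the destination is right, but the Tannakian vehicle stalls at Step~1; the paper reaches it by the more pedestrian but sound route of \cite{LS1}.
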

\begin{proof}
The first statement can be proved by the explicit realization of the automorphic vector bundle $\V_\lambda^\vee$ in terms of the Kuga families over $X$, and the second statement follows from the theorem of Faltings as in \cite{Fal}. See \cite{LS1} sections 3-5 for more details.
\end{proof}

\end{document}